\newcommand{\comment}[1]{}
\newcommand{\n}{\par\noindent}
\newcommand{\sn}{\par\smallskip\noindent}
\newcommand{\mn}{\par\medskip\noindent}
\newcommand{\bn}{\par\bigskip\noindent}
\newcommand{\pars}{\par\smallskip}
\newcommand{\bR}{\mathbf R}
\theoremstyle{definition}
\newtheorem{definition}{Definition}
\newcommand{\intro}[1]{\textbf{#1}}
\newcommand{\Pa}[1]{\bigl(#1\bigr)}
\providecommand{\set}[1]{\ensuremath{\bigl\{\,#1\,\bigr\}}}
\newcommand{\field}[1]{\ensuremath{\mathbb{#1}}}
\newcommand{\Nat}{\field{N}}
\newcommand{\Nats}{\Nat^\star}
\newcommand{\Zed}{\field{Z}}
\newcommand{\et}{\ \&\ }
\newcommand{\rfield}[1]{\ensuremath{\mathbf{#1}}}
\newcommand{\KF}{\ensuremath{\field{K}}}
\newcommand{\Kf}{\ensuremath{\field{K}}}
\newcommand{\kf}{\ensuremath{\rfield{k}}}
\newcommand{\KS}{\ensuremath{\KF^\star}}
\newcommand{\ks}{\ensuremath{\kf^\star}}
\newcommand{\FF}{\ensuremath{\field{F}}}
\newcommand{\HF}{\ensuremath{\field{H}}}
\newcommand{\LF}{\ensuremath{\field{L}}}
\newcommand{\ff}{\ensuremath{\rfield{f}}}
\newcommand{\Oh}{\Dedekind O}
\newcommand{\extension}[2]{#1/#2}
\newcommand{\KH}{\ensuremath{\KF^{\mathrm H}}}
\newcommand{\videal}{\ensuremath{\mathcal{M}}}
\newcommand{\vg}{G}
\newcommand{\val}{\ensuremath{v}}
\newcommand{\ball}[1]{\ensuremath{\boldsymbol{\Lambda}_{#1}}}
\newcommand{\ballc}{\ensuremath{\ball{c}}}
\newcommand{\La}{\ensuremath{\ball{a}}}
\newcommand{\res}[1]{\overline{#1}}
\newcommand{\factor}{f}%
\newcommand{\vcomptwo}[2][]{%
{#2}%
  \def\tempa{}%
     \def\tempb{#1}%
      \ifx\tempa\tempb%
      \else%
       [#1]%
     \fi%
  }
\newcommand{\vcomp}[1][]{\ensuremath{A%
    \def\tempa{}%
    \def\tempb{#1}%
    \ifx\tempa\tempb%
    \else%
      [#1]%
    \fi%
  }}
\newcommand{\vcompf}{\ensuremath{\mathcal{A}}}
\newcommand{\vcompL}{\ensuremath{\vcomp[\Lambda]}}
\newcommand{\vcompG}{\ensuremath{\vcomp[\Gamma]}}
\newcommand{\vcompb}[1][]{\ensuremath{B
    \def\tempa{}%
    \def\tempb{#1}%
    \ifx\tempa\tempb%
    \else%
      [#1]%
    \fi%
  }}
\newcommand{\vcompbp}[1]{\ensuremath{B'[#1]}}
\newcommand{\vcompbf}{\ensuremath{\mathcal{B}}}
\newcommand{\vcompbL}{\ensuremath{\vcompb[\Lambda]}}
\newcommand{\vcompbG}{\ensuremath{\vcompb[\Gamma]}}
\newcommand{\vcompcf}{\mathcal C}
\newcommand{\vcompcG}{\vcomptwo{C}[\Gamma]}
\newcommand{\vring}[1][]{\ensuremath{\mathcal{O}%
    \def\tempa{}%
    \def\tempb{#1}%
    \ifx\tempa\tempb%
    \else%
      [#1]%
    \fi%
  }}
\newcommand{\vringL}{\ensuremath{\vring[\Lambda]}}
\newcommand{\vringG}{\ensuremath{\vring[\Gamma]}}
\newcommand{\Dedekind}[1]{\breve{#1}}
\newcommand{\vgh}{\Dedekind\vg}
\DeclareMathOperator*{\smallop}{o}
\newcommand{\smallo}[1]{\smallop(#1)}
\newcommand{\cut}[2]{\bigl(#1,\, #2\bigr)}
\newcommand{\gp}{\gamma^+}
\newcommand{\gm}{\gamma^-}
\newcommand{\lp}{\lambda^+}
\newcommand{\lm}{\lambda^-}
\newcommand{\anu}{a_\nu}
\newcommand{\amu}{a_\mu}
\newcommand{\isequence}{\ensuremath{(a_\nu)_{\nu\in I}}}
\newcommand{\dl}
{\mathbin{-\mkern-11mu^{\mathrm{L}}\mkern2mu}}
\def\ar{\mathbin{+\mkern-5mu^{\mathrm{R}}}}
\newcommand{\ig}[1]{\ensuremath{\widehat{#1}}}
\newcommand{\fsum}{\mathcal{S}}
\newcommand{\xs}{\fsum x}
\DeclareMathOperator{\lt}{lt}
\DeclareMathOperator*{\supp}{supp}
\newcommand{\KG}{\mbox{}^G\kf}
\newcommand{\kG}{\kf((\vg))}
\newcommand{\kGf}{\kf((\vg,\factor))}
\newcommand{\kg}{\kf(\vg)}
\newcommand{\kgring}{\kf[\vg]}
\newcommand{\wloG}{w.l.o.g\mbox{.}\xspace}
\newcommand{\Wlog}{W.l.o.g\mbox{.}\xspace}
\newcommand{\ie}{i.e\mbox{.}\xspace}
\newcommand{\rhs}{R.H.S\mbox{.}\xspace}
\newcommand{\lub}{l.u.b\mbox{.}\xspace}
\newcommand{\glb}{g.l.b\mbox{.}\xspace}
\newcommand{\wtoc}{W.T.o.C\mbox{.}\xspace}
\newcommand{\towcomp}{T.o.C\mbox{.}\xspace}
\newcommand{\CWToC}{C.W.T.o.C\mbox{.}\xspace}
\newcommand{\CToC}{C.T.o.C\mbox{.}\xspace}
\newcommand{\WToC}{W.T.o.C\mbox{.}\xspace}
\newcommand{\ToC}{T.o.C\mbox{.}\xspace}
\theoremstyle{plain}
\newtheorem{lemma}{Lemma}
\newtheorem{thm}[lemma]{Theorem}
\newtheorem{proposition}[lemma]{Proposition}
\newtheorem{corollary}[lemma]{Corollary}
\newtheorem{fhypothesis}[lemma]{Fundamental hypothesis}
\theoremstyle{remark}
\newtheorem{remark}[lemma]{Remark}
\newtheorem{example}[lemma]{Example}
\newtheorem*{example*}{Example}
\newtheorem*{examples*}{Examples}
\newcounter{examplecounter}[section]
\newtheorem{mainexample}[lemma]{Main example}
\newtheorem{o@claim}[lemma]{Claim}
\newtheorem{i@claim}{Claim}
\theoremstyle{definition}
\newtheorem{definizione}[lemma]{Definition}
\newtheorem{@question}[lemma]{Question}
\newtheorem{notation}[lemma]{Notation}
\newenvironment{hypothesis*}[1][Hypothesis]
{\par\noindent\textbf{#1.}}%
{\par}
\newenvironment{axiom*}[1][Axiom]%
{\par\noindent\textbf{#1.}}%
{\par}
\newcounter{proof}
\newif\if@inproof
\let\oldproof\proof
\let\oldendproof\endproof
\renewcommand{\proof}[1]{%
  \refstepcounter{proof}%
  \@inprooftrue%
  \oldproof#1}
\def\endproof{\oldendproof\@inprooffalse}
\newenvironment{claim}[1][]{%
  \if@inproof\begin{i@claim}[#1]%
  \else\begin{o@claim}[#1]%
  \fi}{%
  \if@inproof\end{i@claim}%
  \else\end{o@claim}%
  \fi}%
\numberwithin{i@claim}{proof}
\renewcommand{\thei@claim}{\arabic{i@claim}}
\numberwithin{equation}{section}
\numberwithin{lemma}{section}
\numberwithin{answer}{section}
\newcommand{\adresse}{\par\bigskip\small \rm
A.\ Fornasiero:\par
Institut f\"ur mathematische Logik
und Grundlagen der Mathematik \par
Albert-Ludwigs-Universit\"at,
D-79104 Freiburg, Germany.\par
Email: antongiulio.fornasiero@googlemail.com
\pars
F.-V. Kuhlmann:\par
Research Center for Algebra, Logic and Computation,\par
University of Saskatchewan, S7N 5E6, Canada.\par
Email: fvk@math.usask.ca
\pars
S. Kuhlmann:\par
Research Center for Algebra, Logic and Computation,\par
University of Saskatchewan, S7N 5E6, Canada.\par
Email: skuhlman@math.usask.ca}
\title{
Towers of complements to valuation rings and truncation closed
embeddings of valued fields
\footnote{2000 Mathematics Subject Classification: Primary: 12J10,
12J15, 12L12, 13A18; \n Secondary: 03C60, 12F05, 12F10, 12F20. \n {\it
Key words:} completion of an ordered group, valued field,
fields of generalized power series, truncation closed embedding,
complement to valuation ring.\n
{\bf Research partially supported
by NSERC Discovery Grants.}
}}
\author{Antongiulio Fornasiero, Franz-Viktor Kuhlmann,\\ Salma Kuhlmann
}
\date{April 15, 2008}
\begin{document}
\maketitle
\begin{abstract}
We study necessary and sufficient conditions for a valued field~$\KF$
with value group $G$ and residue field $\kf$ (with char $\KF$ = char
$\kf$) to admit a truncation closed embedding in the field of
generalized power series $\kf((G, f))$ (with factor set~$f$). We show
that this is equivalent to the existence of a family ({\it tower of
complements}) of $\kf$-subspaces of $\KF$ which are complements of the
(possibly fractional) ideals of the valuation ring. If $\KF$ is a
Henselian field of characteristic~0 or, more generally, an algebraically
maximal Kaplansky field, we give an intrinsic construction of
such a family which does not rely on a given truncation closed embedding.
We also show that towers of complements and truncation closed embeddings
can be extended from an arbitrary field to at least one of its maximal
immediate extensions.
\end{abstract}
\section{Introduction}
Truncation closed embeddings of valued fields in fields of
generalized power series (see Section \ref{valf} for
definitions and notations) were introduced by Mourgues and Ressayre
in their investigation of integer parts of ordered fields. An {\bf
integer part} (IP for short) $Z$ of an ordered field $\KF$ is a
discretely ordered subring, with $1$ as the least positive element,
and such that for every $x\in \KF$, there is a $z\in Z$ such that
$z\leq x < z+1$. The interest in studying these rings originates from
Shepherdson's work in
\cite{S}, who showed that IP's of real closed fields are precisely
the models of a fragment of Peano Arithmetic called Open Induction.
In \cite{MOU-RES}, the authors establish the existence of an
IP for any real closed field $\KF$ as follows (see Section \ref{valf}
for definitions and notations): Let $v$ be the natural valuation on
$\KF$. Denote by $\kf$ the residue field and by $G$ the value group of
$\KF$. Fix a residue field section $\iota$ (we will assume that $\iota$
is the identity), and a value group section $t:\vg\to\KS$. [M--R] show
that there is an order preserving embedding $\varphi$ of $\KF$ in the
field of generalized power series $\kf ((G))$ such that $\varphi (\KF)$
is a truncation closed subfield. They observe that for the field $\kf
((G))$, an integer part is given by $\kf ((G^{<0})) \oplus \Zed$, where
$\kf ((G^{<0}))$ is the (non-unital) $\kf$-algebra of power series with
negative support. It follows that for any truncation closed subfield $F$
of $\kf ((G))$, an integer part is given by $Z_F=(\kf ((G^{<0}))\cap
F)\oplus \Zed$. Finally $\varphi^{-1}(Z_F)$
is an integer part of $\KF$ if we take $F=\varphi (\KF)$.
Let $A$ be a $\kf$-subspace of $\KF$ which is also an
additive complement to the valuation ring~$\vring$ of $\KF$: we will
call $A$ a {\bf $\kf$-complement of~$\vring$}. We say that $A$ is {\bf
multiplicative} if $A \cdot A \subseteq A$ (i.e. if $A$ is a
$\kf$-algebra). A multiplicative $\kf$-complement of~$\vring$ will be
called a {\bf $\kf$-algebra complement of~$\vring$}. Since
$\kf((G^{<0}))\cap F$ is clearly a multiplicative $\kf$-complement of
the valuation ring of $F$, $\varphi^{-1}(\kf((G^{<0}))\cap F)$ is a
multiplicative $\kf$-complement of~$\vring$. We call an integer part $Z$
of $\KF$ (respectively a multiplicative $\kf$-complement of~$\vring$)
obtained in this way from a truncation closed embedding
a {\bf truncation integer part} of $\KF$ (respectively a {\bf truncation
$\kf$-algebra complement of~$\vring$}, or {\bf truncation $\kf$-algebra}
for short). In this terminology, it follows in particular from
\cite{MOU-RES} that every real closed field $\KF$ with residue field
$\kf$ and valuation ring $\vring$ admits a truncation IP and a
truncation $\kf$-algebra complement of~$\vring$.

\mn
In light of these results, we asked in Remarks 3.2 and 3.3
of \cite{B-K-K} more generally for necessary and sufficient
conditions on the valued field $\KF$
for the existence of
$\kf$-algebra complements of $\vring$. We also asked whether
such complements are always truncation $\kf$-algebra complements.
A valued
field $\KF$ with residue field $\kf$ and value group $G$ is called a
{\bf Kaplansky field} if $\kf$ and $G$
satisfy a pair of conditions called
Kaplansky's ``Hypothesis A'' (\cite{Kap}; page 312 statements (1) and
(2)).
The conditions
are void if the characteristic $\kf$ is $0$, whereas
if the characteristic of $\kf$ is
$p>0$, then Hypothesis A holds if and only if $G$ is
$p$-divisible and $\kf$ does not admit any extensions of
degree divisible by $p$.
Kaplansky's original result \cite{Kap} Theorem 6\footnote{There is a
misprint on line 2 of the statement of the Theorem; $K$ should be
replaced by ${\mathfrak K}$.}
yields that a Kaplansky field $\KF$
(with char $\KF$ = char
$\kf$) can be embedded in the power series field $\kf((G, f))$, possibly
with a suitable choice of a factor set $f$
(cf.\ Definition
\ref{psfwfactorset}). In view of this, it is natural to ask
 more generally for necessary and sufficient
conditions on a valued field $\KF$
for the
existence of truncation
closed embeddings of $\KF$ in $\kf ((G, f))$.
This paper addresses these questions.
\mn
{\it Unless otherwise stated, we assume throughout that the valued field
$\KF$ admits a value group section and a residue field section. In
particular, we only deal with the ``equal characterisitic'' case, i.e.,
char $\KF$ = char $\kf$.}
\mn
The main tool in our investigations is the
following concept. A tower of complements of the valuation ring $\vring$
of $\KF$ is a family $\vcompf = \set{\vcomp[\Lambda]:\Lambda\in
\vgh}$ of $\kf$-subspaces of $\Kf$ (indexed by the order completion
$\vgh$ of the value group $G$; cf.\ Definition \ref{completion})
satisfying certain natural properties (cf.\ Definition \ref{wtoc}). We
show (cf.\ Theorem \ref{THM:EMBEDDING}) that the existence of such a
family is a necessary and sufficient condition for a valued field $(\KF,
v)$ to admit a truncation closed embedding in the field of generalized
power series $\kf((G, f))$, and that there is a one-to-one correspondence
between towers of complements for $\KF$ and truncation closed embeddings
of $\KF$ in $\kf((G, f))$ (cf.\ Corollary \ref{COR:EMBEDDING}.)
A tower of complements $\vcompf$ is uniquely determined by $\vcomp[0^-]
\in \vcompf$ which is a $\kf$-algebra complement of~$\vring$ (cf.\
Corollary \ref{corto7}). In particular, given $A$ a $\kf$-algebra
complement of~$\vring$, we conclude that $A$ is a truncation
$\kf$-algebra complement if and only if there is a tower of complements
$\vcompf$ such that $\vcomp = \vcomp[0^-]$. In Section
\ref{oldopenquestion} we analyze when such an $\vcompf$ can be
constructed for a given $\vcomp$.

\mn
In Section \ref{building}, we analyze the procedure for extending towers
of complements (T.o.C.). We start with the subfield of rational series
$\kf(G)\subseteq \KF$ and proceed by induction, building T.o.C.'s for
larger and larger subfields of $\KF$. The field of rational
series $\kf(G)$ has a \ToC (Section~\ref{basicase}). We proceed by
induction: we assume that we already built a \ToC ${\cal A}$ for a
subfield $K$ of $\KF$, such that $\kf(G)\subseteq K$. Note that $\KF$ is
an immediate extension of $K$. Let $a \in \KF\setminus K$. We want to
extend the T.o.C.\ ${\cal A}\>$ to a T.o.C.\ ${\cal B}\>$ for $K(a)$.
Let $(a_\nu)_{\nu\in I}$ be a pseudo Cauchy sequence in $K$ with limit
$a$. We assume that $a$ satisfies one of the two conditions a) or b) of
the Fundamental Hypothesis~\ref{FH}. In case b), the algebraic case, we
can extend ${\cal A}$ to a T.o.C.\ ${\cal B}$ for $K(a) = K[a]$ if
certain conditions are satisfied; the extension ${\cal B}$ is
defined in Definition~\ref{DEF:EXT-RING}. In particular, we can extend
${\cal A}$ to the Henselization of $K$ (Lemma~\ref{LEM:TOWER-HENSEL}).
In case a), the transcendental case, we can extend ${\cal A}$ to a
T.o.C.\ ${\cal B}$ for $K[a]$ (Corollary~\ref{COR:FIELD-MULT}); note
that in this case $K[a]$ is a ring, not a field. Then we extend ${\cal
B}$ further to a T.o.C.\ for $K(a)$, the quotient field of $K[a]$.

\mn
Putting these steps together, we prove that every Henselian field of
residue characteristic 0 has a T.o.C. . In fact, start with \kf(G), pass
to the Henselization, add a transcendental element $a$ satisfying a),
pass again to the Henselization, and so on. Here we use that Henselian
fields of residue characteristic $0$ do not admit proper
immediate algebraic extensions. But in the case of positive
residue characteristic, one has to deal with such extensions. In
Section~\ref{PRC} we prove that towers of complements on a
given field of positive characteristic can be extended to {\it at least
one} maximal immediate algebraic extension and thus to {\it at least
one} maximal immediate extension (Theorem~\ref{extamKf}). It follows
from our results of Sections~\ref{5.3} and \ref{PRC} that, in the equal
characterisitc case, algebraically maximal Kaplansky fields admit towers
of complements and thus also truncation closed embeddings in power
series fields (cf.\ Theorem \ref{buildhens}, Theorem~\ref{buildkapl} and
Corollary~\ref{tcepc}). Without the condition ``algebraically maximal'',
the existence of truncation closed embeddings can in general not be
expected (cf.\ examples in \cite{F} and \cite{Ku2}).

\mn
Note that Fornasiero (\cite{F}; Theorem 5.1) already showed the
existence of truncation closed embeddings
for Henselian fields of residue characteristic 0, and has
indicated the same result (in the equal positive characteristic case)
for algebraically maximal Kaplansky fields (\cite{F}; paragraph
following Theorem 8.12), by generalizing the approach of
\cite{MOU-RES}. But in this paper we prove it through an intrinsic
construction of towers of complements. This approach allows us to obtain
an even stronger result. Namely, Theorem~\ref{extamKf} implies that a
truncation closed embedding of a field of positive characteristic can be
extended to a truncation closed embedding of at least one of its maximal
immediate algebraic extensions and at least one of its maximal immediate
extensions, even if the field is not a Kaplansky field. In that case,
the truncation closed embedding may not be extendable to all such
extensions, as we show in an example. This means that for such fields,
there are maximal immediate extensions that are ``better'' than others.
It should definitely be interesting to study their properties, both from
an algebraic and from a model theoretic point of view.

\mn
We conclude with the following remark and open question:
There exist valued fields (with sections for the value group $G$ and the
residue field $\kf$ ) that admit a valuation preserving embedding
(compatible with the sections) in $\kf((G, f))$, but admit {\it no}
truncation closed embedding (cf.\ examples in \cite{F} and \cite{Ku2}).
We do not know whether such fields can be algebraically maximal.

\par\bigskip
\section{Preliminaries}\label{prelim}
\subsection{Dedekind cuts on ordered groups}\label{ordg}
Let $O$ be an ordered set.
A \intro{cut} $\cut{\Lambda^L}{\Lambda^R}$ of $O$ is a partition of $O$
into two subsets $\Lambda^L$ and $\Lambda^R$, such that, for every
$\lambda \in\Lambda^L$ and $\lambda' \in \Lambda^R$, $\lambda < \lambda'$.
We will denote with $\Dedekind{O}$ the set of cuts $\Lambda:=\cut{\Lambda^L}
{\Lambda^R}$ of the order $O$ (including $-\infty := \cut{\emptyset}
{O}$ and $+\infty := \cut{O}{\emptyset}$).
\sn
Unless specified otherwise, small Greek letters
$\gamma, \lambda, \cdots$ will range among elements of~$O$,
capital Greek letters $\Gamma,\Lambda, \cdots$ will range among elements
of~$\Dedekind{O}$.
\sn
Given $\gamma\in O$,
\[\begin{aligned}
\gamma^- &:= \cut{(-\infty, \gamma)}{[\gamma, +\infty)} &\text{and}\\
\gamma^+ &:= \cut{(-\infty, \gamma]}{(\gamma, +\infty)}
\end{aligned}\]
are the cuts determined by it.
Note that $\Lambda^R$ has a minimum $\lambda$ if and only if $\Lambda
= \lambda^-$.
Dually, $\Lambda^L$ has a maximum $\gamma$ if and only if $\Lambda
= \gamma^+$.
\mn
The \intro{ordering} on $\Oh$ is given by $\Lambda\leq \Gamma$ if and only if
$\Lambda^L\subseteq\Gamma^L$ (or, equivalently,
$\Lambda^R\supseteq\Gamma^R$).
To simplify the notation, we will sometimes write $\gamma < \Lambda$
as a synonym of $\gamma\in\Lambda^L$, or equivalently $\gm < \Lambda$,
or equivalently $\gp \leq \Lambda$.
Similarly, $\gamma > \Lambda$ if and only if $\gamma\in\Lambda^R$, or equivalently $\gp > \Lambda$.
Hence, we have $\gm < \gamma < \gp$.
\mn
An ordered set $O$ is \intro{complete} if for every $S \subseteq O$, the
\lub and the \glb of $S$ exist.
Note that if $O$ is any ordered set, then $\Oh$ is complete.
Given a subset $S\subseteq O$, $S^+\in\Oh$ is the smallest cut
$\Lambda$ such that $S \subseteq \Lambda^L$,
and $S^-$ is the largest cut $\Gamma$ such that $S \subseteq \Lambda^R$.
Note that $S^+ = -\infty$ if and only if $S$ is empty, and $S^+ =
+\infty$ if and only if $S$ is unbounded.
Note also that $ S^+ = \sup\set{\gp:\gamma\in S}$,%
\footnote{The supremum on the \rhs is taken in the ordered set $\Oh$.}
and $S^+ > \gamma$ for every $\gamma\in S$.
Moreover, $\Lambda = (\Lambda^L)^+$.
\bn
Now let $\vg$ be an ordered Abelian group.
Given $\Lambda, \Gamma \in \vgh$, their {\it left} \intro{sum} is the
cut
\[
\Lambda + \Gamma := \set{\lambda + \gamma : \lambda < \Lambda,
\gamma < \Gamma}^+.
\]
We also define {\it right} \intro{sum}, as
\[
\Lambda \ar \Gamma
:= \set{\lambda + \gamma: \lambda > \Lambda, \gamma >\Gamma}^-.
\]
Given $\gamma \in \vg$, we write
\[
\gamma + \Lambda := \cut{\set{\gamma + \lambda:
\lambda\in\Lambda^L}}{\set{\gamma + \lambda': \lambda' \in \Lambda^R}}.
\]
One can verify that $\gamma + \Lambda = \gp + \Lambda = \gm \ar
\Lambda$, and that:
\[\begin{array}{r@{\;\;\extracolsep{1ex}+}l@{\;\;=}r@{\;\;\ar}l@{\;\;=}l}
\Lambda & (+\infty) & \Lambda & (+\infty) & +\infty,\\
\Lambda & (-\infty) & \Lambda & (-\infty) & -\infty,\\
+\infty & (+\infty) & +\infty & (+\infty) & + \infty,\\
-\infty & (-\infty) & -\infty & (-\infty) & - \infty.
\end{array}\]
Note also that $\Lambda + \Gamma \leq \Lambda \ar \Gamma$.
\begin{remark} \label{completion}
$(\vgh, \leq)$ is a complete linear order, and moreover
$(\vgh, +, \leq)$ is an ordered commutative monoid, with neutral
element~$0^+$, that is, if $\alpha \leq \beta$,
then $\alpha + \gamma \leq \beta + \gamma$.
Similarly $(\vgh, \ar, \leq)$ is an ordered commutative monoid,
with neutral element~$0^-$.
The map $\phi^+$ (resp.\ $\phi^-$) from
$(\vg, \leq, 0, +)$ to $(\vgh, \leq, 0^+, +)$
(resp.\ to $(\vgh, \leq, 0^-, \ar)$) sending $\gamma$ to $\gp$
(resp.\ to $\gm$) is a homomorphism of ordered monoids.
The anti-isomorphism $-$ of $(\vg, \leq)$, sending $\gamma$ to
$-\gamma$,
induces an anti-isomorphism (with the same name $-$) between
$(\vgh, \leq, +)$ and $(\vgh, \geq, \ar)$,
sending $\Lambda$ to  $\cut{-\Lambda^R}{-\Lambda^L}$.
Hence, all theorems about $+$ have a dual statement about $\ar$.
\end{remark}
\begin{remark}
Note that
$-(\gp) = (-\gamma)^-$, and
$-(\gm) = (-\gamma)^+$.
\end{remark}
\begin{definizione}
Given $\Lambda, \Gamma\in\vgh$, define their (right)
difference $\Lambda - \Gamma$ in the following way:
\[
\Lambda - \Gamma := \set{\lambda - \gamma : \lambda >
\Lambda, \gamma < \Gamma}^-.
\]
\end{definizione}
The following Lemma is easily proved (see Lemma 2.11 and Remark
3.6 of
\cite{FOR-MAM}).
\begin{lemma}\label{LEM:SUM-DIFF}
\begin{enumerate}
\item $\Lambda < \Theta$ if and only if $\Lambda - \Theta < 0$.
\item $\Lambda \geq \Theta$ if and only if $\Lambda - \Theta > 0$.
\item $\Lambda > - \Theta$ if and only if $\Lambda + \Theta > 0$.
\item $\Lambda < \Gamma + \Theta$ if and only if $\Lambda - \Gamma < \Theta$.
\item $\Lambda \geq \Gamma + \Theta$ if and only if $\Lambda -
\Gamma \geq \Theta$.
\item $\Lambda \geq (-\Gamma) + \Theta$ if and only if $\Lambda \ar
\Gamma \geq \Theta$.
\end{enumerate}
\end{lemma}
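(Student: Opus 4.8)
The plan is to establish part~(4) directly from the definitions and then obtain the other five items from it by formal manipulations. The only tools needed beyond the definitions are the facts recorded just before the statement: for $S\subseteq\vg$ one has $S^+\leq\Theta\iff S\subseteq\Theta^L$ and $\Theta\leq S^-\iff S\subseteq\Theta^R$, so that $(S^+)^L=\bigcup_{s\in S}(-\infty,s]$ and $(S^-)^R=\bigcup_{s\in S}[s,+\infty)$; that $\Lambda^L$ is downward closed while $\Lambda^R$ is upward closed; and that for cuts $X,Y$ one has $X<Y$ iff some $\mu\in\vg$ satisfies $\mu\in Y^L\cap X^R$. The cases in which one of $\Lambda,\Gamma,\Theta$ equals $\pm\infty$ are immediate from the conventions for $\Lambda+(\pm\infty)$, $\Lambda\ar(\pm\infty)$ and the corresponding rule for the difference, so I would reduce at once to proper cuts.

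For~(4): expanding the definition of $\Gamma+\Theta$ together with the description of $(S^+)^L$, and using that $\Lambda^R$ is upward closed, the inequality $\Lambda<\Gamma+\Theta$ is equivalent to the existence of $\gamma<\Gamma$ and $\tau<\Theta$ with $\gamma+\tau>\Lambda$. Dually, expanding the definition of $\Lambda-\Gamma$ with the description of $(S^-)^R$, and using that $\Theta^L$ is downward closed, the inequality $\Lambda-\Gamma<\Theta$ is equivalent to the existence of $\lambda>\Lambda$ and $\gamma<\Gamma$ with $\lambda-\gamma<\Theta$. These two elementwise conditions are each a one-line consequence of the other: from the first, put $\lambda:=\gamma+\tau$; from the second, put $\tau:=\lambda-\gamma$; in each case the three required inequalities are immediate. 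This proves~(4).

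The remaining items are then routine. Since $(\vgh,\leq)$ is a linear order (Remark~\ref{completion}), negating both sides of~(4) gives~(5). Putting $\Gamma:=\Theta$ in~(4) and~(5) and replacing the second summand by the neutral element $0^+$ of $(\vgh,+)$ (so that $\Theta+0^+=\Theta$) gives $\Lambda<\Theta\iff\Lambda-\Theta<0^+$ and $\Lambda\geq\Theta\iff\Lambda-\Theta\geq 0^+$, which are~(1) and~(2), once one notes that for a cut $X$ the assertion ``$X<0$'' means $0\in X^R$, \ie $X<0^+$, while ``$X>0$'' is its negation $0\in X^L$. For~(6), one checks from the definitions the identity $\Lambda-\Gamma=\Lambda\ar(-\Gamma)$, hence $\Lambda-(-\Gamma)=\Lambda\ar\Gamma$; substituting $-\Gamma$ for $\Gamma$ in~(5) then yields~(6). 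Finally~(3) I would prove directly, in the style of~(4): by the description of $(S^+)^L$, the relation $\Lambda+\Theta>0$ is equivalent to the existence of $\lambda<\Lambda$ and $\tau<\Theta$ with $\lambda+\tau\geq 0$; taking $\mu:=\lambda$ and using $-\mu\leq\tau$ together with the downward closure of $\Theta^L$ gives $\mu<\Lambda$ and $-\mu<\Theta$, \ie $\Lambda>-\Theta$; conversely, if $\Lambda>-\Theta$ choose $\mu$ with $\mu<\Lambda$ and $-\mu<\Theta$, and then $\mu+(-\mu)=0$ witnesses $\Lambda+\Theta>0$.

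I do not expect a genuine obstacle here --- this is the ``easily proved'' part of the paper --- but the steps that require care are the bookkeeping ones: not confusing the two operators $S\mapsto S^+$ and $S\mapsto S^-$ (and correspondingly the left sum~$+$ with the right sum~$\ar$), fixing unambiguously which cut the symbol $0$ denotes in~(1)--(3) and how ``$\gamma<\Lambda$'' and ``$X<0$'' are to be read for an element $\gamma$ and a cut $X$, and verifying that the $\pm\infty$ boundary cases are consistent with the monoid conventions of Remark~\ref{completion}.
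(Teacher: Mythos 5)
The paper itself gives no proof of Lemma~\ref{LEM:SUM-DIFF}, deferring to Lemma~2.11 and Remark~3.6 of \cite{FOR-MAM}, so there is no in-paper argument to compare against. Your proof is correct: the element-wise reformulations of $\Lambda<\Gamma+\Theta$ and $\Lambda-\Gamma<\Theta$ are right (the downward closure in $(S^+)^L$ and the upward closure in $(S^-)^R$ are handled accurately), the translation between them by $\lambda=\gamma+\tau$ is sound, the identity $\Lambda-\Gamma=\Lambda\ar(-\Gamma)$ needed for~(6) checks out directly from $(-\Gamma)^R=-\Gamma^L$, the reading of ``$X<0$'' as $X<0^+$ and ``$X>0$'' as $X>0^-$ matches the paper's conventions for comparing a cut with a group element, and your direct argument for~(3) is valid. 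Two minor remarks worth tidying in a final write-up: you should actually spell out the $\pm\infty$ cases rather than wave at them, since the table of conventions in the paper does not cover the mixed case $(-\infty)+(+\infty)$ versus $(-\infty)\ar(+\infty)$ (which differ), though none of the six equivalences actually breaks there; and the phrase ``replacing the second summand by $0^+$'' would read more clearly as ``substitute $\Gamma\mapsto\Theta$, $\Theta\mapsto0^+$ in~(4)''.
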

\begin{definizione}
Given $n\in\Nat$, define
\[\Lambda - n\Gamma := \Lambda \underbrace{-\Gamma -
\dotsb -\Gamma}_{n \text{ times}},\mbox{ and }
\Lambda + n\Gamma := \Lambda \underbrace{+ \Gamma +
\dotsb \Gamma}_{n\text{ times}}.\]
In particular, $\Lambda - 0 \Gamma = \Lambda + 0 \Gamma = \Lambda$.
Moreover, given $n\in\Nats$, define
\[
n\Gamma := \underbrace{\Gamma + \dotsb \Gamma}_{n\text{ times}},
\mbox{ and }
(-n)\Gamma := -(n\Gamma) = \underbrace{-\Gamma -
\dotsb -\Gamma}_{n \text{ times}}.\]
\end{definizione}
The following technical results will be used throughout the paper, and
are given without proof
(see Proposition 3.15 and Corollary 3.16 of \cite{FOR-MAM}).
\begin{proposition}\label{PROP:N-GAMMA}
$(\Lambda - n\Gamma) + n\Gamma \leq \Lambda \leq (\Lambda + n\Gamma)
- n\Gamma$.
\end{proposition}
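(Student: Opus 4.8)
The plan is to reduce the statement to an iterated form of Lemma~\ref{LEM:SUM-DIFF}(5), prove that iterated form by induction on $n$, and then read off the two inequalities. Concretely, for $n\ge 1$ I would prove by induction the auxiliary equivalence
\[
(\star_n)\colon\qquad X - n\Gamma \ge \Theta \iff X \ge n\Gamma + \Theta,
\qquad\text{for all } X,\Theta\in\vgh .
\]
The base case $n=1$ is precisely Lemma~\ref{LEM:SUM-DIFF}(5). For the inductive step, $X-(n+1)\Gamma = (X-n\Gamma)-\Gamma$ by definition, so Lemma~\ref{LEM:SUM-DIFF}(5) applied with $X-n\Gamma$ in place of $\Lambda$ yields $X-(n+1)\Gamma \ge \Theta \iff X-n\Gamma \ge \Gamma+\Theta$; then $(\star_n)$, used with $\Gamma+\Theta$ in place of $\Theta$ (legitimate, as $(\star_n)$ is quantified over all $\Theta$), turns this into $X \ge n\Gamma+(\Gamma+\Theta)$, and finally $n\Gamma+(\Gamma+\Theta)=(n+1)\Gamma+\Theta$ by the definition of $n\Gamma$ and the associativity of $+$ on $\vgh$ (Remark~\ref{completion}). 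Chaining the equivalences gives $(\star_{n+1})$.

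Granting $(\star_n)$, the Proposition follows immediately. The case $n=0$ is trivial, since $\Lambda-0\Gamma=\Lambda+0\Gamma=\Lambda$. For $n\ge 1$, the first inequality $(\Lambda-n\Gamma)+n\Gamma\le\Lambda$ is, by commutativity of $+$ on $\vgh$, the same as $\Lambda\ge n\Gamma+(\Lambda-n\Gamma)$, which by $(\star_n)$ (take $X:=\Lambda$, $\Theta:=\Lambda-n\Gamma$) amounts to the triviality $\Lambda-n\Gamma\ge\Lambda-n\Gamma$. The second inequality $\Lambda\le(\Lambda+n\Gamma)-n\Gamma$ is, by $(\star_n)$ (take $X:=\Lambda+n\Gamma$, $\Theta:=\Lambda$), equivalent to $\Lambda+n\Gamma\ge n\Gamma+\Lambda$, again a triviality by commutativity.

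I do not expect a genuine obstacle here: everything is built from Lemma~\ref{LEM:SUM-DIFF} and the ordered-monoid laws of Remark~\ref{completion}, both valid for all cuts (including $\pm\infty$), so the degenerate cases $\Lambda,\Gamma\in\{\pm\infty\}$ need no separate treatment — or, if one prefers, they can be checked directly from the table of sums preceding Remark~\ref{completion}. The only thing to be careful about is the bookkeeping: keeping the left sum $+$ used throughout distinct from the right sum $\ar$, parsing $\Lambda-n\Gamma$, $n\Gamma$ and $n\Gamma+\Theta$ correctly, and making sure $(\star_n)$ is stated with a universally quantified $\Theta$ so that it can be reapplied to $\Gamma+\Theta$. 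An alternative route avoiding the auxiliary claim is a direct induction on $n$: deduce the $(n+1)$-case from the $n=1$ case by adding $n\Gamma$ and invoking the induction hypothesis together with monotonicity of $+$; for the second inequality this also needs monotonicity of the difference in its first argument, which itself follows from the $n=1$ case and Lemma~\ref{LEM:SUM-DIFF}(5).
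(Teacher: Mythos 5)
Your proof is correct, but there is nothing in the paper to compare it against: Proposition~\ref{PROP:N-GAMMA} is one of the ``technical results \dots given without proof'', the paper deferring to Proposition~3.15 and Corollary~3.16 of \cite{FOR-MAM}. What you have done is supply a self-contained derivation from the two ingredients the paper does state, namely the equivalence $\Lambda \geq \Gamma + \Theta \iff \Lambda - \Gamma \geq \Theta$ of Lemma~\ref{LEM:SUM-DIFF}(5) and the ordered commutative monoid laws of Remark~\ref{completion}. The induction establishing $(\star_n)$ is sound: the paper's definitions make $X-(n+1)\Gamma = (X-n\Gamma)-\Gamma$ a left-associated iterated difference, your reparsing of $\Lambda + n\Gamma$ and of $(\Lambda-n\Gamma)+n\Gamma$ as sums with the single cut $n\Gamma$ is justified by associativity, and quantifying $(\star_n)$ over all $\Theta$ legitimizes the substitution $\Theta \mapsto \Gamma+\Theta$ in the inductive step. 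Reading off the two inequalities by choosing $\Theta := \Lambda - n\Gamma$ (resp.\ $X := \Lambda + n\Gamma$, $\Theta := \Lambda$) and invoking reflexivity resp.\ commutativity is exactly the adjunction-style argument one would expect, and the degenerate cases $\Lambda,\Gamma \in \{\pm\infty\}$ are indeed covered because Lemma~\ref{LEM:SUM-DIFF} and Remark~\ref{completion} are stated for all cuts. In short: a correct proof of a statement the paper only cites, obtained by iterating the cited one-step lemma -- a reasonable reconstruction of what \cite{FOR-MAM} proves, with the modest benefit of keeping the paper self-contained.
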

\begin{corollary}\label{COR:N-GAMMA}
$(\Lambda - n\Gamma) + (\Lambda' - n' \Gamma)
\leq (\Lambda + \Lambda') - (n+n')\Gamma$.
\end{corollary}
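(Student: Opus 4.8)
The plan is to derive this directly from Proposition~\ref{PROP:N-GAMMA}, using the monoid and order properties of $(\vgh,+,\leq)$ recorded in Remark~\ref{completion} together with the ``transposition'' identities of Lemma~\ref{LEM:SUM-DIFF}. Write $a:=\Lambda-n\Gamma$ and $b:=\Lambda'-n'\Gamma$; the goal is the inequality $a+b\leq(\Lambda+\Lambda')-(n+n')\Gamma$.

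First I would establish the iterated form of Lemma~\ref{LEM:SUM-DIFF}(5): for every $m\in\Nat$ and all $\Theta,\Xi\in\vgh$ one has $\Theta\leq\Xi-m\Gamma$ if and only if $\Theta+m\Gamma\leq\Xi$. The case $m=0$ is trivial, and the case $m=1$ is exactly Lemma~\ref{LEM:SUM-DIFF}(5), after using commutativity of $+$ to pass from $\Gamma+\Theta$ to $\Theta+\Gamma$. For the inductive step one writes $\Xi-(m+1)\Gamma=(\Xi-m\Gamma)-\Gamma$, applies the case $m=1$, then the inductive hypothesis, and finally the associativity of $+$ on $\vgh$ to rewrite $(\Theta+\Gamma)+m\Gamma$ as $\Theta+(m+1)\Gamma$.

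Applying this with $m=n+n'$, $\Xi=\Lambda+\Lambda'$ and $\Theta=a+b$, the asserted inequality becomes equivalent to $(a+b)+(n+n')\Gamma\leq\Lambda+\Lambda'$. Using commutativity and associativity of $+$ on $\vgh$ (Remark~\ref{completion}) one rearranges the left-hand side as $(a+n\Gamma)+(b+n'\Gamma)$. Now Proposition~\ref{PROP:N-GAMMA} gives $a+n\Gamma=(\Lambda-n\Gamma)+n\Gamma\leq\Lambda$ and likewise $(\Lambda'-n'\Gamma)+n'\Gamma\leq\Lambda'$, and since $+$ is order-preserving in each argument (again Remark~\ref{completion}) we conclude $(a+n\Gamma)+(b+n'\Gamma)\leq\Lambda+\Lambda'$, which is what we wanted.

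There is no genuine obstacle here; the points that require a little care are the small induction for the iterated transposition identity, the fact that the rearrangement of $(a+b)+(n+n')\Gamma$ uses \emph{both} commutativity and associativity of $+$ (not merely monotonicity), and checking the degenerate cases in which some of $\Lambda,\Lambda',\Gamma$ equal $\pm\infty$ or in which $n$ or $n'$ is $0$ — these are covered at once by the conventions $\Lambda\pm 0\Gamma=\Lambda$ and the tables for $\pm\infty$ recorded before and in Remark~\ref{completion}.
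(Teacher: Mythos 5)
The paper states this corollary (together with Proposition~\ref{PROP:N-GAMMA} and Corollary~\ref{COR:D-K-M}) explicitly ``without proof,'' deferring to Proposition~3.15 and Corollary~3.16 of \cite{FOR-MAM}, so there is no in-paper argument to compare yours against; what matters is whether the derivation stands on its own, and it does. The iterated adjunction $\Theta\leq\Xi-m\Gamma$ if and only if $\Theta+m\Gamma\leq\Xi$ is correctly obtained by induction from Lemma~\ref{LEM:SUM-DIFF}(5), using associativity of $+$ and the left-associative conventions defining $\Xi-m\Gamma$ and $\Theta+m\Gamma$. Taking $\Xi=\Lambda+\Lambda'$ and $m=n+n'$ reduces the claim to $\bigl((\Lambda-n\Gamma)+n\Gamma\bigr)+\bigl((\Lambda'-n'\Gamma)+n'\Gamma\bigr)\leq\Lambda+\Lambda'$, which is Proposition~\ref{PROP:N-GAMMA} applied to each summand followed by the monotonicity of the left sum recorded in Remark~\ref{completion}. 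You are right to emphasize that the regrouping uses the full commutative-monoid structure of $(\vgh,+)$ and not just monotonicity, and the argument correctly avoids any appeal to cancellation, which fails in $\vgh$: everything is routed through the adjunction between the left sum and the right difference. The $n=0$, $n'=0$ and $\pm\infty$ degeneracies need no separate treatment since Lemma~\ref{LEM:SUM-DIFF}(5) and Proposition~\ref{PROP:N-GAMMA} are asserted without restriction.
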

\begin{corollary}\label{COR:D-K-M}
Let $d,k,m\in\Nat$, with $k < m$. Then,
\[
\bigl( \Lambda - (m + d)\Gamma \bigr) + (m\Gamma - k\Gamma)
\leq \Lambda - (d + k) \Gamma.\]
\end{corollary}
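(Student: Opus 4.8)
The plan is to rewrite the asserted inequality so that both remaining difficulties are absorbed into Proposition~\ref{PROP:N-GAMMA}. A first remark is that the obvious attack via Corollary~\ref{COR:N-GAMMA} does not work: applying it with $\Lambda' = m\Gamma$ and $n' = k$ only yields $\bigl(\Lambda-(m+d)\Gamma\bigr) + (m\Gamma-k\Gamma) \le (\Lambda + m\Gamma) - (m+d+k)\Gamma$, and by Proposition~\ref{PROP:N-GAMMA} this last cut is $\ge \Lambda - (d+k)\Gamma$, so it lands on the wrong side. Bounding the two summands $\Lambda-(m+d)\Gamma$ and $m\Gamma - k\Gamma$ separately and adding is hopeless for the same reason: $m\Gamma - k\Gamma$ can be strictly larger than $(m-k)\Gamma$, so the ``rounding errors'' produced by the two left differences fail to cancel in the direction one needs.

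Instead, I would first establish the peeling principle: \emph{for every $n\in\Nat$ and every cut $\Theta$ one has $\Lambda - n\Gamma \ge \Theta$ if and only if $\Lambda \ge n\Gamma + \Theta$.} This is proved by induction on $n$, the case $n=1$ being exactly Lemma~\ref{LEM:SUM-DIFF}(5) and the inductive step using $\Lambda - (n+1)\Gamma = (\Lambda - n\Gamma) - \Gamma$ together with associativity of $+$ on $\vgh$ (Remark~\ref{completion}). Applying this with $n = d+k$ and $\Theta = \bigl(\Lambda-(m+d)\Gamma\bigr) + (m\Gamma - k\Gamma)$ turns Corollary~\ref{COR:D-K-M} into the equivalent inequality
\[
\Lambda \ \ge\ (d+k)\Gamma + \Bigl[\bigl(\Lambda - (m+d)\Gamma\bigr) + (m\Gamma - k\Gamma)\Bigr].
\]
Now use that $(\vgh, +, \le)$ is an ordered commutative monoid (Remark~\ref{completion}) to regroup the right-hand side as $\bigl(\Lambda - (m+d)\Gamma\bigr) + \bigl[(d+k)\Gamma + (m\Gamma - k\Gamma)\bigr]$, and bound the inner bracket: writing $(d+k)\Gamma = d\Gamma + k\Gamma$ and commuting,
\[
(d+k)\Gamma + (m\Gamma - k\Gamma) \ =\ d\Gamma + \bigl[(m\Gamma - k\Gamma) + k\Gamma\bigr] \ \le\ d\Gamma + m\Gamma \ =\ (m+d)\Gamma,
\]
where the inequality is Proposition~\ref{PROP:N-GAMMA} (with $\Lambda$ replaced by $m\Gamma$ and $n = k$) followed by monotonicity of $+$. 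Feeding this back in and using monotonicity and Proposition~\ref{PROP:N-GAMMA} once more (with $n = m+d$),
\[
(d+k)\Gamma + \Bigl[\bigl(\Lambda - (m+d)\Gamma\bigr) + (m\Gamma - k\Gamma)\Bigr] \ \le\ \bigl(\Lambda - (m+d)\Gamma\bigr) + (m+d)\Gamma \ \le\ \Lambda,
\]
which is precisely the reduced inequality; hence the Corollary follows.

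The only step I expect to require any care is the peeling principle itself: one has to set up the induction correctly and make sure the degenerate values ($d = 0$, $k = 0$, or $m = 0$, and the reading of $0\Gamma$ as the neutral element $0^+$) are covered — but all of these follow from the identities already recorded in Section~\ref{ordg} and Remark~\ref{completion}, and the case $\Gamma \in \{\pm\infty\}$ can be checked directly from the table of infinite sums. Everything else is monoid bookkeeping together with the two invocations of Proposition~\ref{PROP:N-GAMMA}; in particular the hypothesis $k < m$ is never used, and is presumably stated only because that is the range in which the Corollary is later applied.
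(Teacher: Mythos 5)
Your argument is correct. Note, however, that the paper itself gives no proof of Corollary~\ref{COR:D-K-M}: it is listed among the technical facts imported without proof from Proposition~3.15 and Corollary~3.16 of \cite{FOR-MAM}, so there is no in-paper argument to compare yours against. What you supply is a self-contained derivation from the material actually present in Section~\ref{ordg}: your ``peeling principle'' ($\Lambda - n\Gamma \geq \Theta$ iff $\Lambda \geq n\Gamma + \Theta$) is a clean induction on $n$ whose base case is Lemma~\ref{LEM:SUM-DIFF}(5) and whose step only needs associativity and commutativity of $+$ from Remark~\ref{completion}; after that, the two invocations of Proposition~\ref{PROP:N-GAMMA} (for $(m\Gamma - k\Gamma) + k\Gamma \leq m\Gamma$ and for $(\Lambda - (m+d)\Gamma) + (m+d)\Gamma \leq \Lambda$) together with monotonicity of $+$ close the estimate. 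Your preliminary observation that the na\"{\i}ve route through Corollary~\ref{COR:N-GAMMA} lands on the wrong side of Proposition~\ref{PROP:N-GAMMA} is accurate, and your handling of the degenerate values ($d=0$, $k=0$, the reading of $0\Gamma$ as $0^+$) is consistent with the paper's conventions, which define $\Lambda \pm 0\Gamma = \Lambda$ and make $0^+$ the neutral element of the left sum. Your closing remark is also right: the hypothesis $k < m$ enters nowhere in the argument (it only guarantees $m \geq 1$ so that $m\Gamma$ is defined without any convention), so your proof in fact establishes a marginally more general statement than the one quoted; the restriction matches the range in which the corollary is used, e.g.\ in Lemma~\ref{LEM:D-K-M-I-J}.
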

\begin{lemma}\label{LEM:D-K-M-I-J}
For every $i, j, k, m, d \in \Nats$ such that
$i, j, k < m$, and $i + j = m + d$,
\[
(\Lambda - i\Gamma) + (\Lambda' - j\Gamma) + (m\Gamma - k\Gamma)
\leq (\Lambda + \Lambda') - ( d + k) \Gamma.
\]
\end{lemma}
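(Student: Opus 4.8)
The plan is to derive the inequality by concatenating Corollary~\ref{COR:N-GAMMA} with Corollary~\ref{COR:D-K-M}. In fact the only hypotheses that will be used are $i+j=m+d$ and $k<m$; the conditions $i,j<m$ are there merely to guarantee that such a $d\in\Nats$ exists.

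First I apply Corollary~\ref{COR:N-GAMMA} to the naturals $i,j$, obtaining
\[
(\Lambda - i\Gamma) + (\Lambda' - j\Gamma) \;\leq\; (\Lambda + \Lambda') - (i+j)\Gamma \;=\; (\Lambda + \Lambda') - (m+d)\Gamma ,
\]
where the equality is the hypothesis $i+j=m+d$. By Remark~\ref{completion} the operation $+$ on $\vgh$ is commutative, associative and order-preserving, so I may add the fixed cut $m\Gamma-k\Gamma$ to both sides and regroup the resulting three summands on the left; this gives
\[
(\Lambda - i\Gamma) + (\Lambda' - j\Gamma) + (m\Gamma - k\Gamma) \;\leq\; \bigl((\Lambda + \Lambda') - (m+d)\Gamma\bigr) + (m\Gamma - k\Gamma).
\]
Now I apply Corollary~\ref{COR:D-K-M} with $\Lambda+\Lambda'$ in place of $\Lambda$ and with the same $d,k,m$ — legitimate since $k<m$ — to bound the right-hand side by $(\Lambda+\Lambda') - (d+k)\Gamma$. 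Transitivity of $\leq$ on the complete linear order $\vgh$ then yields the claim.

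I do not expect any genuine obstacle: the proof is a short chain of two already-established estimates, and the only things to verify are bookkeeping — that Corollary~\ref{COR:N-GAMMA} applies to arbitrary naturals $i,j$, that the input to Corollary~\ref{COR:D-K-M} satisfies its sole hypothesis $k<m$, and that associativity of $+$ on $\vgh$ permits the regrouping. If there is a mildly delicate point, it is keeping straight which of the two monoid operations ($+$ versus $\ar$) and which variant of difference is in play, so that the monotonicity invoked is exactly the one recorded in Remark~\ref{completion}.
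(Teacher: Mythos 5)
Your proof is correct and matches the paper's own argument exactly: apply Corollary~\ref{COR:N-GAMMA} to the first two summands, use monotonicity of $+$ on $\vgh$ to add $m\Gamma - k\Gamma$, then finish with Corollary~\ref{COR:D-K-M}. The observation that only $i+j=m+d$ and $k<m$ are needed is a nice bit of bookkeeping, though it is implicit in the paper's one-line chain as well.
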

\begin{proof}
The lemma is a consequence of corollaries~\ref{COR:D-K-M}
and~\ref{COR:N-GAMMA}.
In fact,
\begin{multline*}
(\Lambda - i\Gamma) + (\Lambda' - j\Gamma) + (m\Gamma - k\Gamma) \leq\\
\leq \Pa{(\Lambda + \Lambda') - (m + d)\Gamma} + (m \Gamma - k \Gamma) \leq
(\Lambda + \Lambda') - (d + k) \Gamma.
\qedhere
\end{multline*}
\end{proof}
\sn
Set $\ig\Gamma := \Gamma - \Gamma$. It is straightforward to verify that
$\hat\Gamma - \hat\Gamma = \hat\Gamma = \hat\Gamma + \hat\Gamma$, and to
establish the following;
\begin{remark}\label{REM:SUM-ITERATE}
If $\Gamma$ is of the form $\gamma + \ig\Gamma$,
then $m\Gamma - k\Gamma = (m-k)\Gamma$ for every $k < m \in \Nat$.
\end{remark}
\subsection{Valued Fields}  \label{valf}
We need to recall some facts about valued fields. (Cf.\ \cite{E-P},  \cite {E},
\cite{Ri}).\sn
Let $\KF$ be a field, $G$ an ordered Abelian
group and
$\infty$ an element greater than every element of $G$.
A surjective map
$v:\> K\rightarrow G\cup\{\infty\}$
is a {\bf valuation}
on $\KF$ if for all $a,b\in \KF$:
(i) \ $v(a)=\infty$ if and only if $a=0$,\ \
(ii) \ $v(ab)= v(a)+v(b)$,\ \
(iii) \ $v(a-b)\geq \min\{v(a),v(b)\}$.
We say that $(\KF,v)$ is a {\bf valued field}, and shall write just
$\KF$ whenever the context is clear. It follows that
$v(a-b)= \min\{v(a),v(b)\}$ if $v(a)\not= v(b)$. The {\bf value
group} of
$\KF$ is
$v(\KF):=G$. The {\bf valuation ring} of $v$ is
$\vring := \{a\>;\> a\in \KF \mbox{ and } v(a)\geq 0\}$
and the {\bf valuation ideal} is
$\videal := \{a\>;\> a\in \KF \mbox{ and } v(a)> 0\}\;.$
The field $\cal O/\cal M$, denoted by $\kf$, is the {\bf residue
field}.
For $b\in \cal O$, $\overline{b}$ is its image under the
residue map.
\sn
A valued field $\KF$ is {\bf Henselian} if it satisfies Hensel's Lemma:
given a polynomial $p(x) \in \vring [x]$, and $a \in \kf$ a simple root
of the reduced polynomial $\overline{p(x)}\in \kf[x]$, we can find a
root $b\in \KF$ of $p(x)$ such that $\overline{b}=a$.
\sn
Let $\KF$ be a valued field, with value group $\vg$ and residue field~$\kf$,
with the same characteristic as~$\Kf$. Let $\vring$ be the valuation
ring of $\KF$, and $\videal$ its maximal ideal. A \intro{value group
section} is a map $t:\vg\to\KS$ such that $\forall \gamma\in\vg \quad
\val(t^\gamma)=\gamma \mbox{ and } t^{-\gamma} = 1/{t^\gamma}.$ Note
that $t$ must satisfy $t^0 = 1$. Additional conditions on $t$ might be
imposed later. A \intro{residue field section} is an embedding
$\iota:\kf\to\KF$ such that $\res{\iota x} = x$ for every~$x\in\kf$.
Whenever the context is clear, we will just write section to refer to
either a value group section or a residue field section. Further, we
will assume that $\iota$ is the identity. We recall the definition of a
generalized power series fields with factor set.

\begin{definizione}\label{factorset}
Let $(A, +, 0)$ and $(B, \cdot\, , 1)$ be two Abelian groups.
Then a $2$ \intro{co-cycle} is a map
$\factor: A\times A \to B$
satisfying the following conditions:
\begin{enumerate}
\item $\displaystyle\factor[\alpha,\beta]=\factor[\beta,\alpha]$.
\item $\displaystyle\factor[0,0]=\factor[0,\alpha]=\factor[\alpha,0]=1$.
\item $\displaystyle\factor[\alpha,\beta+\gamma]\factor[\beta,\gamma]=
\factor[\alpha+\beta,\gamma]\factor[\alpha,\beta]$.
\item $\displaystyle\factor[-\alpha,\alpha]=1$.
\end{enumerate}
\end{definizione}
The following is easily verified:
\begin{lemma}\label{LEM:CO-BOUNDARY}
Given a value group section $t$, the map $\factor:G\times G\to\kf^*$
defined by
\[
\factor[\alpha,\beta] := \frac{t^{\alpha}\,
t^{\beta}}{t^{(\alpha+\beta)}}
\]
is a 2 co-cycle.
Moreover, $t$ is a group homomorphism if and only if $\factor = 1$.
\end{lemma}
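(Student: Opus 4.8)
The plan is to verify the four axioms of Definition~\ref{factorset} by direct computation in the field $\KF$, using only the defining properties of a value group section: $t^0 = 1$, $\val(t^\gamma)=\gamma$, and $t^{-\gamma} = 1/t^\gamma$.

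First I would check that $\factor$ genuinely takes values in $\kf^*$. Since $\val\bigl(t^\alpha t^\beta / t^{\alpha+\beta}\bigr) = \alpha + \beta - (\alpha+\beta) = 0$, each $\factor[\alpha,\beta]$ is a unit of $\vring$ with nonzero residue, so after passing to residues (or, since $\iota$ is the identity, viewing the quotient inside $\kf$) it lies in $\kf^*$. All the identities below in fact hold already as equalities in $\KF$ between units of $\vring$, hence descend to $\kf^*$.

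Then the axioms, one at a time. Axiom~(1) is immediate from commutativity of $\vg$ and of multiplication in $\KF$: $t^\alpha t^\beta / t^{\alpha+\beta} = t^\beta t^\alpha / t^{\beta+\alpha}$. Axiom~(2) follows from $t^0 = 1$, e.g.\ $\factor[0,\alpha] = t^0 t^\alpha / t^{0+\alpha} = t^0 = 1$, and likewise for $\factor[\alpha,0]$ and $\factor[0,0]$. For axiom~(3) I would simplify both sides of $\factor[\alpha,\beta+\gamma]\factor[\beta,\gamma] = \factor[\alpha+\beta,\gamma]\factor[\alpha,\beta]$: the left-hand side $\frac{t^\alpha t^{\beta+\gamma}}{t^{\alpha+\beta+\gamma}}\cdot\frac{t^\beta t^\gamma}{t^{\beta+\gamma}}$ reduces, after cancelling $t^{\beta+\gamma}$, to $\frac{t^\alpha t^\beta t^\gamma}{t^{\alpha+\beta+\gamma}}$, and the right-hand side $\frac{t^{\alpha+\beta} t^\gamma}{t^{\alpha+\beta+\gamma}}\cdot\frac{t^\alpha t^\beta}{t^{\alpha+\beta}}$ reduces, after cancelling $t^{\alpha+\beta}$, to the same expression. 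Axiom~(4) uses the last property of a value group section: $\factor[-\alpha,\alpha] = t^{-\alpha} t^\alpha / t^{0} = t^{-\alpha} t^\alpha = 1$ because $t^{-\alpha} = 1/t^\alpha$.

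For the ``moreover'' clause: $\factor = 1$ identically means $t^\alpha t^\beta = t^{\alpha+\beta}$ for all $\alpha,\beta\in\vg$, which is exactly the assertion that $t\colon(\vg,+)\to(\KS,\cdot)$ is a group homomorphism; the side conditions $t^0=1$ and $t^{-\gamma}=1/t^\gamma$ are part of the definition of a value group section and are in any case forced by multiplicativity. I expect no real obstacle here — the whole argument is routine algebra — and the only point deserving a moment's care is the initial observation that the formula lands in $\kf^*$ rather than merely in $\KS$.
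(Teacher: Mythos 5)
The paper states this lemma with only the remark ``The following is easily verified:'' and gives no proof, so there is nothing to compare against beyond the intended routine verification, which is exactly what you carry out. Your computations for the four axioms and the ``moreover'' clause are all correct, and you are right to flag the one non-trivial point — that $\factor$ lands in $\kf^*$ rather than merely in $\KS$ — and to resolve it by noting that $v\bigl(t^\alpha t^\beta/t^{\alpha+\beta}\bigr)=0$ so that passing to residues is legitimate and all identities among units of $\vring$ descend through the residue map.
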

\sn
The co-cycle obtained from the section $t$ in
Lemma~\ref{LEM:CO-BOUNDARY} is denoted by
$\factor:={\bf d} t$.
\begin{definizione}[Factor set]
Let $\KF$ be a valued field containing its residue field $\kf$.
A \intro{factor set} is a 2 co-cycle
$\factor: G \times G \to \kf^*$.
\sn
If $t:G\to\KS$ is a section and $\factor={\bf d} t$ the corresponding
factor set, we will say that $t$ is a section with factor set $\factor$.
\end{definizione}
\begin{definizione}\label{psfwfactorset}
Let $G$ be an ordered Abelian group, and $\kf$ a field be given.
Given a 2 co-cycle $\factor:G\times G \to \kf^*$,
the field of generalized power series $\kGf$ with factor set $\factor$
is the set of formal series
$s = \sum_{\gamma\in G}a_{\gamma} t^\gamma$,
with $a_\gamma\in\kf$, whose support $\supp s := \{\gamma\>;\>
\gamma\in G
\mbox{ and } a_\gamma\not= 0\}$ is a well-ordered subset of $G$. Sum and
multiplication are defined formally, with the condition
\[
t^{\alpha}t^{\beta} = \factor[\alpha,\beta]t^{\alpha+\beta}.
\]
It is well-known that $\kGf$ is a valued field, with valuation given by
$v(s):= \min \supp s$
(by convention set $\min \supp
s=\infty$ if $\supp s= \emptyset$), value group $G$,
residue field $\kf$ and canonical section
$t(\gamma):=t^\gamma$.
With this definition, $t$ is a section with factor set $\factor$, and
$\kf(G, f)$ is the subfield of $\kf((G, f))$ generated by
$\kf \cup \set{t^\gamma: \gamma\in G}.$
If $f=1$ we denote $\kf((G, f))$ by $\kf((G))$.
\end{definizione}
\sn
A subfield $F$ of $\kf((G, f))$ is {\bf truncation closed} if whenever
$s=\sum_{\gamma\in G}a_\gamma t^\gamma\in
F$ and $g\in G$, the restriction $s_{<g}=\sum_{\gamma\in G^{<g}}s_\gamma
t^\gamma$ of $s$ to the initial segment $G^{<g}$ of $G$ also
belongs to $F$. Given a valued field
 $\KF$ with residue field $\kf$ and value group $G$ with $\kf(G, f)
\subset
\KF$,
 a {\bf truncation closed embedding} of $\KF$ in $\kf((G, f))$ over
$\kf(G, f)$,
is an embedding $\varphi$ such that $\varphi$ is the identity on
$\kf(G, f)$ and
$F:=\varphi
(\KF)$ is truncation
closed. Note that since the restriction of $\varphi$ to $\kf(G)$ is the
identity, $\varphi$ is in particular an embedding of $\kf$-vector
spaces.

\section{Tower of complements}\label{SEC:FAMILY}
From now on, we shall assume that $\KF$ is a valued field (with same
characteristic as its residue field) which
admits a value group section and a residue field section. Fix once and
for all a residue field section $\iota$
(we will assume that $\iota$ is the identity),
and a value group section $t:\vg\to\KS$.
\sn
From now on, we fix
$\bR\supseteq \vring$ a $\kf$-subalgebra of $\KF$ containing $\kf$ and the
image of~$t$.
\sn
Given $\Lambda\in\vgh$, define
\[\vring[\Lambda] := \set{x\in \bR: \val(x)\in\Lambda^R} =
\set{x \in \bR: \val(x)>\Lambda}.\]
Note that these are precisely the (possibly fractional) ideals of the
valuation ring.

\begin{remark}
\begin{enumerate}
\item $\vring[\Lambda]$ is a $\kf$-linear subspace of $\bR$;
\item for every $\gamma\in\Lambda^R$, $t^\gamma\in \vring[\Lambda]$, in
particular, $t^\gamma\in \vring[\gamma ^-]$;
\item $\vring[0^-] = \vring$;
\item $\vring[0^+] = \videal$;
\item $\vring[\gamma^{\pm}] = t^\gamma\vring[0^\pm]$;
\item\label{EN:RING-INCREASING} $\Gamma\leq\Lambda$ if and only if
$\vring[\Gamma]\supseteq\vring[\Lambda]$;
\item\label{EN:RING-CONTINUOUS} if $\Lambda^R$ has no minimum,
then $\vring[\Lambda] = \bigcup_{\gamma > \Lambda}
\vring[\gamma^{\pm}]=\bigcap_{\gamma <\Lambda}\vring[\gamma^{+}]$  ;
\item $\vring[\Lambda \ar \Gamma] = \vring[\Lambda]\vring[\Gamma]$;
\item $\vring[\gamma + \Lambda] = t^\gamma \vring[\Lambda]$.
\end{enumerate}
\end{remark}
\begin{definizione}\label{wtoc}
A ($t$-compatible) \intro{ weak tower of complements} (of the valuation
ring
$\vring$) for $\bR$
%
%
is a family $\vcompf = \set{\vcomp[\Lambda]:\Lambda\in \vgh}$ of subsets
of $\bR$ indexed by $\vgh$, such that:
\begin{enumerate}
\renewcommand{\theenumi}{{\textbf{C}}\Alph{enumi}}
\item\label{AX:CA} $\vcomp[\Lambda]$ is a $\kf$-subspace of $\bR$;
\item\label{AX:CB} $\vcomp[\Lambda] \oplus \vring[\Lambda] = \bR$,
as $\kf$-spaces;
\item\label{AX:CC} $t^\gamma \kf \subseteq \vcomp[\gamma^+]$;
\item\label{AX:CD} $\Gamma\leq\Lambda$ if and only if $\vcomp[\Gamma]
\subseteq\vcomp
[\Lambda]$;
\setcounter{enumi}{5}
\item\label{AX:CF} $\vcomp[\gamma + \Lambda] = t^\gamma \vcomp[\Lambda]$
\end{enumerate}
In particular, $\vcomp := \vcomp[0^-]$ is a $\kf$-complement
of~$\vring$. Note that $\vcomp[+\infty] = \bR$, and $\vcomp[-\infty] =
\{0\}$.\mn
A ($t$-compatible) \intro{tower of complements} is a ($t$-compatible)
weak tower of complements
satisfying the following additional axiom, instead of Axiom~\ref{AX:CF}:
\begin{enumerate}
\renewcommand{\theenumi}{{\textbf{C}}\Alph{enumi}}
\setcounter{enumi}{4}
\item\label{AX:CE} $\vcomp[\Lambda + \Gamma] \supseteq \vcomp[\Lambda]
\vcomp[\Gamma]$.
\end{enumerate}
We then also say that {\bf $\vcompf$ is multiplicative}. In this case,
$\vcomp$ is a $\kf$-algebra complement.
%
%
\end{definizione}

\begin{remark}
Any tower of complements is a weak tower of complements.
\footnote{Example~\ref{EX:WTOC} produces a W.T.o.C. that is not a T.o.C..}
\end{remark}
\begin{proof}
We have to prove that if $\vcompf$ is a tower of complements, then
$t^\gamma\vcomp[\Lambda] = \vcomp[\gamma + \Lambda]$.
Since
$t^\gamma \in \vcomp[\gp]$, we have $t^\gamma \vcomp[\Lambda]\subseteq
\vcomp[\gp + \Lambda] = \vcomp[\gamma + \Lambda]$. Conversely,
using the above inclusion we get:
\sn
$
\vcomp[\gamma + \Lambda] = t^{\gamma} t^{-\gamma} \vcomp[\gamma + \Lambda]
\subseteq t^\gamma \vcomp[-\gamma + \gamma + \Lambda] =
t^\gamma \vcomp[\Lambda].$
\qedhere
\end{proof}
\mn
For the remainder of this section, we will assume that $\vcompf$ is a
weak tower of complements for~$\bR$, unless we explicitly specify
otherwise. In the following Lemma, we list useful properties of weak
towers of complements.
\begin{lemma}\label{LEM:COMPL-BASIC}
\begin{enumerate}
\makeatletter
\renewcommand{\p@enumi}{\ref{LEM:COMPL-BASIC}--}
\makeatother
\item $\kf \subseteq \vcomp[0^+] $.
\item $\vcomp[\gamma^-] = t^\gamma \vcomp$.
\item If $\Gamma \leq \Lambda$, then
\[\begin{array}{r@{\extracolsep{0.9ex}\;\; =\ }c@{}c@{}c@{}c@{}cr}
      \bR & \vcompG &\oplus& \Pa{\vringG \cap \vcompL} &\oplus&
\vringL,& \\
\vcompL & \vcompG &\oplus& \Pa{\vringG \cap \vcompL}, &&& \text{and}\\
\vringG & &&\Pa{\vringG \cap \vcompL} &\oplus& \vringL.
\end{array}\]
\item\label{EN:COMPL-BASIC-DEC} Let $\Gamma \leq \Lambda$. Let $x = x_1
+ x_2$, with
$x_1 \in \vcompG$ and $x_2 \in \vring[\Gamma]$;
if $x \in \vcomp[\Lambda]$, then $x_i \in \vcompL$; if $x \in \vringL$,
then $x_i \in \vringL$.
\item $\displaystyle \vring[\gamma^-] \cap \vcomp[\gamma^+] = t^\gamma\kf$.
\item $\displaystyle \vcomp[\gamma^+] = \vcomp[\gamma^-] \oplus t^\gamma\kf$.
\item If $\Lambda^R$ has no minimum, then
$\displaystyle \vcomp[\Lambda] = \bigcap_{\gamma\in\Lambda^R}
\vcomp[\gamma^\pm]$.
\end{enumerate}
\end{lemma}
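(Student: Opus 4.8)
The plan is to establish the seven items in the order given, each building on the ones before it; only the last requires an ingredient beyond Axioms~\ref{AX:CA}--\ref{AX:CF}, namely the ``continuity'' of the family of ideals recorded in item~\ref{EN:RING-CONTINUOUS} of the preceding Remark. The structural heart is (3); items (4)--(6) are then bookkeeping with the direct decompositions it provides.

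For (1) I would take $\gamma = 0$ in Axiom~\ref{AX:CC} and use $t^0 = 1$, so $\kf = t^0\kf \subseteq \vcomp[0^+]$. For (2), since $\gamma + 0^- = \gamma^-$ (immediate from the definition of $\gamma + \Lambda$, or because $0^-$ is the neutral element of $(\vgh,\ar)$ and $\gamma+\Lambda = \gm\ar\Lambda$), Axiom~\ref{AX:CF} gives $\vcomp[\gamma^-] = \vcomp[\gamma + 0^-] = t^\gamma\vcomp[0^-] = t^\gamma\vcomp$.

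For (3), assume $\Gamma \le \Lambda$. From Axiom~\ref{AX:CB} we have the two direct decompositions $\bR = \vcomp[\Gamma] \oplus \vring[\Gamma] = \vcomp[\Lambda] \oplus \vring[\Lambda]$, and from Axiom~\ref{AX:CD} and item~\ref{EN:RING-INCREASING} of the Remark the containments $\vcomp[\Gamma] \subseteq \vcomp[\Lambda]$ and $\vring[\Lambda] \subseteq \vring[\Gamma]$. I would first prove $\vring[\Gamma] = (\vring[\Gamma] \cap \vcomp[\Lambda]) \oplus \vring[\Lambda]$: the sum is direct since $(\vring[\Gamma] \cap \vcomp[\Lambda]) \cap \vring[\Lambda] \subseteq \vcomp[\Lambda] \cap \vring[\Lambda] = \{0\}$, and it exhausts $\vring[\Gamma]$ because for $x \in \vring[\Gamma]$ the decomposition $x = y + z$ with $y \in \vcomp[\Lambda]$, $z \in \vring[\Lambda]$ has $y = x - z \in \vring[\Gamma]$, hence $y \in \vring[\Gamma] \cap \vcomp[\Lambda]$. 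Dually, $\vcomp[\Lambda] = \vcomp[\Gamma] \oplus (\vring[\Gamma] \cap \vcomp[\Lambda])$: directness from $\vcomp[\Gamma] \cap \vring[\Gamma] = \{0\}$, and for $x \in \vcomp[\Lambda]$ the decomposition $x = u + w$ with $u \in \vcomp[\Gamma]$, $w \in \vring[\Gamma]$ has $w = x - u \in \vcomp[\Lambda]$, hence $w \in \vring[\Gamma] \cap \vcomp[\Lambda]$. Substituting either identity into the appropriate one of the two decompositions of $\bR$ yields the triple decomposition.

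Next, (4) follows from uniqueness of the $\vcomp[\Gamma] \oplus \vring[\Gamma]$-decomposition of $x$: if $x \in \vcomp[\Lambda]$, decompose it instead via $\vcomp[\Lambda] = \vcomp[\Gamma] \oplus (\vring[\Gamma] \cap \vcomp[\Lambda])$ from (3) — this is again a $\vcomp[\Gamma] \oplus \vring[\Gamma]$-decomposition, so its two parts must be $x_1, x_2$, and both lie in $\vcomp[\Lambda]$; if $x \in \vring[\Lambda] \subseteq \vring[\Gamma]$ then $x = 0 + x$ is its $\vcomp[\Gamma] \oplus \vring[\Gamma]$-decomposition, so $x_1 = 0$ and $x_2 = x$ lie in $\vring[\Lambda]$. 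For (5): $t^\gamma\kf \subseteq \vcomp[\gamma^+]$ by Axiom~\ref{AX:CC}, and $t^\gamma\kf \subseteq \vring[\gamma^-]$ since $t^\gamma \in \vring[\gamma^-]$ and the latter is a $\kf$-space, giving $\supseteq$; conversely, if $x \in \vring[\gamma^-] \cap \vcomp[\gamma^+]$, write $x = t^\gamma y$ with $y \in \vring$ (as $\vring[\gamma^-] = t^\gamma\vring$), set $c := \res{y} \in \kf \subseteq \vring$, and note $x - t^\gamma c = t^\gamma(y - c) \in t^\gamma\videal = \vring[\gamma^+]$ while $x - t^\gamma c \in \vcomp[\gamma^+]$ by Axiom~\ref{AX:CC}, so $x - t^\gamma c \in \vring[\gamma^+] \cap \vcomp[\gamma^+] = \{0\}$ and $x = t^\gamma c \in t^\gamma\kf$. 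Then (6) is (3) with $\Gamma = \gamma^-$, $\Lambda = \gamma^+$ (note $\gamma^- \le \gamma^+$), combined with (5) to identify $\vring[\gamma^-] \cap \vcomp[\gamma^+] = t^\gamma\kf$.

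Finally, for (7) assume $\Lambda^R$ has no minimum. For $\subseteq$: given $\gamma \in \Lambda^R$ pick $\gamma' \in \Lambda^R$ with $\gamma' < \gamma$; then $\Lambda \le (\gamma')^- < (\gamma')^+ \le \gamma^-$, so $\Lambda < \gamma^- \le \gamma^+$ and Axiom~\ref{AX:CD} gives $\vcomp[\Lambda] \subseteq \vcomp[\gamma^-] \subseteq \vcomp[\gamma^+]$. For $\supseteq$: let $x$ belong to the intersection and decompose $x = a + b$ with $a \in \vcomp[\Lambda]$, $b \in \vring[\Lambda]$ by Axiom~\ref{AX:CB}; since $a \in \vcomp[\Lambda] \subseteq \vcomp[\gamma^-]$ and $x \in \vcomp[\gamma^-]$ for every $\gamma \in \Lambda^R$, also $b = x - a \in \vcomp[\gamma^-]$ for every such $\gamma$; by item~\ref{EN:RING-CONTINUOUS} of the Remark, $b \in \vring[\Lambda] = \bigcup_{\gamma > \Lambda}\vring[\gamma^\pm]$ lies in $\vring[\gamma_0^-]$ for some $\gamma_0 \in \Lambda^R$; hence $b \in \vcomp[\gamma_0^-] \cap \vring[\gamma_0^-] = \{0\}$ and $x = a \in \vcomp[\Lambda]$. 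The only load-bearing step is (3): the ``$x$ minus its component lands back in the ideal/complement'' manipulations must be written out carefully, since (4)--(6) are corollaries of it together with Axiom~\ref{AX:CB}. The one place where an input outside the axiom list is needed is (7), where the no-minimum hypothesis is used twice — to push $\gamma^-$ strictly above $\Lambda$ for the easy half, and through item~\ref{EN:RING-CONTINUOUS} to confine $b$ to a single $\vring[\gamma_0^-]$ for the hard half; that is the step I would flag as the main obstacle.
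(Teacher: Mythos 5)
Your proof is correct and follows essentially the same route as the paper's: item (3) is established via direct-sum decompositions and the remaining items fall out as corollaries, with the no-minimum hypothesis and item~\ref{EN:RING-CONTINUOUS} of the preceding Remark doing the work in (7). One small improvement worth noting: for item (2) you derive $\vcomp[\gamma^-]=t^\gamma\vcomp$ directly from Axiom~\ref{AX:CF} via $\gamma+0^-=\gamma^-$, which is cleaner than the paper's own argument (which invokes the multiplicativity Axiom~\ref{AX:CE} for the inclusion $t^\gamma\vcomp\subseteq\vcomp[\gamma^-]$ even though the lemma is stated for \emph{weak} towers, where only \ref{AX:CF} is guaranteed).
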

\begin{proof}
1.\ \  This follows immediately from $t^0=1$ and Axiom~\ref{AX:CC}.
\mn
2. \ \
We have $t^\gamma\in\vcomp[\gp]$, so $t^\gamma\vcomp \subseteq
\vcomp[\gp] \vcomp\subseteq \vcomp[\gp+0^-] = \vcomp[\gm]$. Conversely,
let $x\in\vcomp[\gm]$, $y := t^{-\gamma}x$. Decompose $y = y_1+y_2$,
$y_1\in\vcomp$, $y_2\in\vring$. Hence, $x = t^\gamma y = t^\gamma y_1 +
t^\gamma y_2$. We have that $t^\gamma y_1 \in \vcomp[\gm]$ by the
previous point. Moreover, $\val(t^\gamma y_2) = \gamma + \val(y_2) \geq
\gamma$, therefore $t^\gamma y_2 \in\vring[\gm]$. However,
$x\in\vcomp[\gm]$, hence $t^\gamma y_2 \in \vring[\gm]\cap\vcomp[\gm]
=(0)$, so $x=t^\gamma y_1 \in t^\gamma\vcomp$.
\mn
3.\ \
Let $x \in \bR$. Write $x = x_1 + y$, where $x_1 \in \vcomp[\Gamma]$ and
$y \in \vring[\Gamma]$. Write $y = x_2 + x_3$, where $x_2 \in
\vcomp[\Lambda]$ and $x_3 \in \vring[\Lambda]$. Note that $x_3 \in
\vring[\Gamma]$, hence $x_2 = y - x_3 \in \vring[\Gamma]$.
Therefore, we have decomposed $x$ as $x = x_1 + x_2 + x_3$.
\sn
To prove the uniqueness of the decomposition,
assume that $x = x_1' + x_2' + x_3'$,
with $x_1' \in \vcomp[\Gamma]$, $x_2' \in \vring[\Gamma]\cap\vcomp[\Lambda]$,
and $x_3' \in \vring[\Lambda]$.
Define $y' := x_2' + x_3'$.
Note that $x_3' \in \vring[\Gamma]$, and therefore $y \in \vring[\Gamma]$.
By Axiom~\ref{AX:CB}, $x_1' = x_1$, and therefore $y' = y$.
Again by Axiom~\ref{AX:CB}, $x_2' = x_2$ and $x_3' = x_3$.
\sn The other assertions follow by similar considerations.
\mn
4.\ \
Immediate from the previous point.
\mn
5.\ \ It is immediate by \ref{AX:CC} that $\vring[\gamma^-] \cap
\vcomp[\gamma^+] \supset t^\gamma\kf$.
Conversely,
let $x\in\vring[\gm]\cap\vcomp[\gp]$, $x\neq 0$.
Hence, $\val(x) = \gamma$ by \ref{AX:CB}.
Write $x$ as $ct^\gamma + z$, with $c\in\ks$, and $z\in\vring[\gp]$.
By \ref{AX:CC}, $ct^\gamma\in\vcomp[\gp]$,
hence $z\in\vcomp[\gp]\cap\vring[\gp]$, therefore $z=0$.
\mn
6.\ \
$\vcomp[\gp] = \vcomp[\gm] \oplus \Pa{\vcomp[\gp] \cap \vring[\gm]}
= \vcomp[\gm] \oplus t^\gamma\kf$.
\mn
7.\ \
By \ref{AX:CD} we have
$\displaystyle \vcomp[\Lambda] \subseteq \bigcap_{\gamma\in\Lambda^R}
\vcomp[\gamma^\pm]$.
Conversely, since $\Lambda^R$ has no minimum,
$\bigcap_{\gamma\in\Lambda^R}\vcomp[\gp]
= \bigcap_{\gamma\in\Lambda^R}\vcomp[\gm]$.
Let $x \in \bigcap_{\gamma\in\Lambda^R}\vcomp[\gp]$.
Decompose $x = x_1 + x_2$, $x_1\in\vcomp[\Lambda]$, $x_2\in\vring[\Lambda]$.
By the previous inclusion, $x_1\in\bigcap_{\gamma\in\Lambda^R}\vcomp[\gm]$.
Assume for a contradiction that $x_2 \neq 0$. Then $\gamma := \val(x_2)
>
\Lambda$. Since $\Lambda^R$ has no minimum, $\Lambda < \gm$,
therefore $x \in \vcomp[\gm]$, and thus $x_2 \in \vcomp[\gm]$.
Hence, $x_2\in\vcomp[\gm] \cap \vring[\gm] = (0)$, a contradiction.
\qedhere
\end{proof}
\begin{corollary}\label{corto7}
The weak tower of complements $\vcompf$ is uniquely determined by
 $\vcomp[0^-] \in \vcompf$.
\end{corollary}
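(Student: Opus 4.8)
The plan is to show that every member $\vcomp[\Lambda]$ of a weak tower of complements is determined by $\vcomp[0^-]$, proceeding in two stages: first recover all the $\vcomp[\gamma^\pm]$ for $\gamma\in\vg$, then recover the remaining $\vcomp[\Lambda]$ (those $\Lambda\in\vgh$ not of the form $\gamma^\pm$) by a limit formula.

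First I would handle the cuts of the form $\gamma^-$ and $\gamma^+$. By Lemma~\ref{LEM:COMPL-BASIC}--2 we have $\vcomp[\gamma^-] = t^\gamma\vcomp[0^-]$, so these are determined by $\vcomp[0^-]$ and the (fixed) value group section $t$. Then Lemma~\ref{LEM:COMPL-BASIC}--6 gives $\vcomp[\gamma^+] = \vcomp[\gamma^-]\oplus t^\gamma\kf$, so the $\vcomp[\gamma^+]$ are determined as well.

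Next I would treat an arbitrary $\Lambda\in\vgh$. If $\Lambda = \gamma^-$ for some $\gamma$, we are done by the above, so assume $\Lambda$ is not of this form; by the remark in Section~\ref{ordg} this means $\Lambda^R$ has no minimum. (The cases $\Lambda = \pm\infty$ are covered by the explicit identities $\vcomp[+\infty] = \bR$, $\vcomp[-\infty] = \{0\}$ noted in Definition~\ref{wtoc}.) Then Lemma~\ref{LEM:COMPL-BASIC}--7 applies and gives
\[
\vcomp[\Lambda] = \bigcap_{\gamma\in\Lambda^R}\vcomp[\gamma^\pm],
\]
expressing $\vcomp[\Lambda]$ in terms of the already-recovered $\vcomp[\gamma^\pm]$. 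Hence $\vcompf$ is completely determined by $\vcomp[0^-]$.

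There is no real obstacle here: the corollary is essentially a bookkeeping consequence of parts 2, 6 and 7 of Lemma~\ref{LEM:COMPL-BASIC}, which have already been proved. The only point requiring a little care is the case division on the shape of $\Lambda$ — making sure that every $\Lambda\in\vgh$ falls under one of: $\Lambda = \gamma^-$ (use part 2), $\Lambda = \pm\infty$ (use the boundary values), or $\Lambda^R$ has no minimum (use part 7) — and observing that a cut of the form $\gamma^+$ has $\Lambda^R = (\gamma,+\infty)$ with no minimum, so it is in fact subsumed under the last case as well (consistently with part 6).
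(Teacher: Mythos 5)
Your argument is correct and is essentially the paper's own proof: determine $\vcomp[\gamma^-]$ from part 2, $\vcomp[\gamma^+]$ from part 6, and $\vcomp[\Lambda]$ for the remaining cuts from the intersection formula in part 7. One small inaccuracy in your closing remark: it is not true in general that a cut $\gamma^+$ has $\Lambda^R=(\gamma,+\infty)$ with no minimum — if $G$ is discrete (e.g.\ $G=\Zed$) then $(\gamma,+\infty)$ has a minimum and $\gamma^+=(\gamma+1)^-$; this does not affect your proof, since you already handle $\gamma^+$ directly via part 6, but the claim that the $\gamma^+$ case is subsumed by the ``no minimum'' case is false in general.
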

\begin{proof}
 First observe that by 2. and 6. of Lemma \ref{LEM:COMPL-BASIC}
$\vcomp[0^-]$ uniquely determines
$\vcomp[\gamma^\pm]$ for all $\gamma \in G$. Second, if $\Gamma$ is a
cut of $G$ not of the form $\gamma^\pm$, then
$\vcomp[\Gamma]$ is the intersection of all $\vcomp[\gamma^\pm]$ for
$\Gamma
< \gamma \in G$ (Lemma
 \ref{LEM:COMPL-BASIC} 7.).
\end{proof}
\begin{corollary}
$t$ is a section with factor set (in~$\kf^*$).
\end{corollary}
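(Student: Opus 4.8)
The plan is to verify that the $2$ co-cycle ${\bf d}t$ of Lemma~\ref{LEM:CO-BOUNDARY}, given by $({\bf d}t)[\alpha,\beta]=t^\alpha t^\beta/t^{\alpha+\beta}$, actually takes its values in $\kf^*$, where as usual we regard $\kf\subseteq\KF$ via $\iota=\id$. Indeed, ``$t$ is a section with factor set'' means precisely that ${\bf d}t$ is a factor set, i.e.\ is $\kf^*$-valued, and the co-cycle identities then come for free from Lemma~\ref{LEM:CO-BOUNDARY}. So I would fix $\alpha,\beta\in\vg$ and show that $t^\alpha t^\beta/t^{\alpha+\beta}\in\kf$.

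First I would locate the product $t^\alpha t^\beta$ inside the weak tower $\vcompf$. By Axiom~\ref{AX:CC} we have $t^\beta\in t^\beta\kf\subseteq\vcomp[\beta^+]$, so Axiom~\ref{AX:CF} yields
\[
t^\alpha t^\beta \in t^\alpha\,\vcomp[\beta^+] = \vcomp[\alpha + \beta^+] = \vcomp[(\alpha+\beta)^+],
\]
the last equality being the identity $\alpha+\beta^+=(\alpha+\beta)^+$ from the cut arithmetic of Section~\ref{ordg}. On the other hand $t^\alpha t^\beta\in\bR$, since $\bR$ is a $\kf$-algebra containing the image of $t$, and $\val(t^\alpha t^\beta)=\alpha+\beta$, so $t^\alpha t^\beta\in\vring[(\alpha+\beta)^-]$ directly from the definition $\vring[(\alpha+\beta)^-]=\set{x\in\bR:\val(x)\geq\alpha+\beta}$.

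Putting the two memberships together, $t^\alpha t^\beta$ lies in $\vring[(\alpha+\beta)^-]\cap\vcomp[(\alpha+\beta)^+]$, which equals $t^{\alpha+\beta}\kf$ by item~5 of Lemma~\ref{LEM:COMPL-BASIC}. Hence $t^\alpha t^\beta=c\,t^{\alpha+\beta}$ for some $c\in\kf$, and $c\neq0$ because $t^\alpha,t^\beta\in\KS$; therefore $({\bf d}t)[\alpha,\beta]=c\in\kf^*$, which is what had to be shown. I do not expect any genuine obstacle here: the one point to keep straight is that one must work with the cut $\beta^+$ (and not $\beta^-$), so that $t^\alpha t^\beta$ lands in $\vcomp[(\alpha+\beta)^+]$ and the intersection formula of Lemma~\ref{LEM:COMPL-BASIC}, item~5, applies at the matching cut $(\alpha+\beta)^+$. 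Note in particular that Axiom~\ref{AX:CE} (multiplicativity) is never invoked, consistently with the statement being about an arbitrary weak tower of complements.
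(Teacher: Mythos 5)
Your proof is correct and is essentially the paper's argument: both rest on Axioms~\ref{AX:CC} and~\ref{AX:CF} together with the identity $\vring[\gamma^-]\cap\vcomp[\gamma^+]=t^\gamma\kf$ from Lemma~\ref{LEM:COMPL-BASIC}. The only (cosmetic) difference is that the paper first normalizes by multiplying with $t^{-\alpha-\beta}$ and works at the cut $0^+$, concluding $d=t^\alpha t^\beta t^{-\alpha-\beta}\in\vring\cap\vcomp[0^+]=\kf$, whereas you work directly at the cut $(\alpha+\beta)^+$.
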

\begin{proof} We compute:
\[
d:=t^\alpha t^\beta t^{-\alpha -\beta}
\in t^\alpha t^\beta t^{-\alpha - \beta}\vcomp[0^+]=
\vcomp[\alpha + \beta + (-\alpha-\beta) + 0^+] =\vcomp[0^+].
\]
Hence, $d \in \vring \cap \vcomp[0^+] = \kf$.
\end{proof}
\begin{mainexample}
Let $\factor: \vg\times\vg \to \kf^\star$ be a factor set.
For every $\Lambda\in\vgh$, define $\kf[[\Lambda^L,\factor]]$ as the subset
of $\kGf$ of the power series with support contained in $\Lambda^L$.
The family $\set{\kf[[\Lambda^L,\factor]]: \Lambda\in\vgh}$ is a tower of
complements for $\kGf$.\\
Moreover, if $t$ has factor set $\factor$, and $\phi$ is a truncation-closed
 embedding of $\KF$ in $\kGf$, preserving the section $t$ and the embedding of the residue field $\kf$, then the family
\[
\vcompf := \set{ \phi^{-1}\Pa{\kf[[\Lambda^L,\factor]]}: \Lambda\in\vgh}
\]
is a tower of complements for $\KF$.
\end{mainexample}
\begin{remark}
Note that in this case
$
\bigcup_{\substack{\Lambda \in \vgh \\ \Lambda < + \infty}}
\vcompL \subsetneq \KF.
$
Since $\KF= A[+\infty]$, we therefore have at least one cut ($\Gamma =
+\infty$) such that $A[\Gamma]$ is not equal to the union of
$A[\Lambda]$, for $\Lambda <\Gamma$, because the image of $\KF$ will
always contain power series with unbounded support. This shows that
$\vcompf$ is {\it not} a ``continuous family'': if $\Gamma$ is a
supremum of an increasing sequence of cuts $\Lambda_i$, then $A[\Gamma]$
is {\it not necessarily} the union of the $A[\Lambda_i]$'s. Note that if
instead $\Gamma$ is an infimum of a decreasing sequence of cuts
$\Lambda_i$, then $A[\Gamma]$ is the intersection of the
$A[\Lambda_i]$'s (Lemma \ref{LEM:COMPL-BASIC} 7.).
\end{remark}
\subsection{Embeddings in power series}     \label{EPS}
\begin{definizione}[Embedding $\fsum$]
For every $\gamma\in\vg$, we can write
\[
\bR = \vcomp[\gm] \oplus t^\gamma \kf \oplus \vring[\gp] =
t^\gamma \vcomp \oplus t^\gamma \kf \oplus t^\gamma\videal.
\]
Given $x \in \bR$, decompose $x = x_1 + a_\gamma t^\gamma + x_3$,
where $x_1\in\vcomp[\gm]$ and $x_3\in\vring[\gp]$.
Consider the formal sum $\xs := \sum_{\gamma\in\vg} a_\gamma t^\gamma$.
$\fsum$~defines a map from $\bR$ to $\KG$, the set of maps from
$\vg$ to~$\kf$.
Note that $\KG$ is an Abelian group under point-wise addition; it
is obvious that $\fsum$ is a homomorphism of (additive) groups. Define
also $\supp x := \supp \xs$.

Observe that the definitions of $\xs$ and $\supp$ depend on the given
tower of complements ${\cal A}$.
\end{definizione}
\begin{lemma}
$\fsum$ is injective.
Moreover, $\val(x) = \min(\supp x)$.
\end{lemma}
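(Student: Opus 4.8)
The plan is to prove the two assertions separately, both directly from the axioms of a weak tower of complements and the decomposition used to define $\fsum$.

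First I would prove injectivity. Since $\fsum$ is a homomorphism of additive groups, it suffices to show that $\fsum x = 0$ implies $x = 0$. Suppose $x \neq 0$ and set $\gamma := \val(x)$, so $x \in \vring[\gamma^-]$ but $x \notin \vring[\gamma^+]$. Decompose $x = x_1 + a_\gamma t^\gamma + x_3$ as in the definition of $\fsum$, with $x_1 \in \vcomp[\gm]$, $a_\gamma \in \kf$, and $x_3 \in \vring[\gp]$. The key point is that $x_1 \in \vcomp[\gamma^-] \subseteq \vring[\gamma^-]$ (the inclusion $\vcomp[\Lambda]\subseteq\vring[\Lambda]$? — no: rather $\vcomp[\gm]$ and $\vring[\gp]$ both sit inside $\vring[\gm]$, since $\vcomp[\gm] = t^\gamma\vcomp \subseteq t^\gamma\vring = \vring[\gm]$ by Lemma~\ref{LEM:COMPL-BASIC}.2 and $\vring[\gp]\subseteq\vring[\gm]$). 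Hence if $a_\gamma = 0$ then $x = x_1 + x_3 \in \vring[\gm]$ with both summands of value $> \gamma$, forcing $\val(x) > \gamma$, a contradiction. So $a_\gamma \neq 0$, i.e. the coefficient of $t^\gamma$ in $\xs$ is nonzero, so $\fsum x \neq 0$. This simultaneously shows injectivity and that $\min(\supp x)\le\gamma = \val(x)$.

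It remains to show the reverse inequality $\min(\supp x)\ge\val(x)$, i.e. that $a_\delta = 0$ for every $\delta < \val(x)$. Fix such a $\delta$; then $x \in \vring[\delta^+]$, so in particular $x \in \vring[\delta^-]$ and $x = 0 + 0\cdot t^\delta + x$ is \emph{a} decomposition of $x$ of the required shape with $x_1 = 0 \in \vcomp[\delta^-]$ and $x_3 = x \in \vring[\delta^+]$. By the uniqueness of the decomposition $\bR = \vcomp[\delta^-]\oplus t^\delta\kf\oplus\vring[\delta^+]$ (which is Axiom~\ref{AX:CB} applied at $\delta^-$ together with Lemma~\ref{LEM:COMPL-BASIC}.6, exactly as in the definition of $\fsum$), the coefficient $a_\delta$ computed from the canonical decomposition must be $0$. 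Hence $\supp x \subseteq \Lambda^R$ where $\Lambda = \val(x)^-$, so $\min(\supp x) \ge \val(x)$, and combined with the previous paragraph $\min(\supp x) = \val(x)$ (with the convention that both sides are $\infty$ when $x = 0$).

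I do not expect a serious obstacle here; the only thing to be careful about is bookkeeping with the cuts $\gamma^\pm$ and making sure the three-term direct sum decomposition $\bR = \vcomp[\gm]\oplus t^\gamma\kf\oplus\vring[\gp]$ is invoked correctly — this is precisely the content of Lemma~\ref{LEM:COMPL-BASIC}.6 combined with Axiom~\ref{AX:CB}, so everything needed is already available. The mild subtlety is that injectivity must be argued via the \emph{leading} term (value $= \val(x)$), while the equality $\val(x) = \min(\supp x)$ additionally needs the vanishing of all lower coefficients, which follows from uniqueness of the decomposition applied at each $\delta < \val(x)$.
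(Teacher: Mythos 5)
The second half of your argument is fine: proving $a_\delta=0$ for every $\delta<\val(x)$ by comparing with the trivial decomposition $x=0+0\cdot t^\delta+x$ and invoking uniqueness of $\bR=\vcomp[\delta^-]\oplus t^\delta\kf\oplus\vring[\delta^+]$ is correct, and is a (mildly) different route from the paper, which instead assumes $a_\lambda\neq 0$ for some $\lambda<\val(x)$ and derives the contradiction $\val(x)=\min\{\val(x_1),\lambda,\val(x_3)\}\leq\lambda$. The problem is in the first half. Your ``key point'', namely $\vcomp[\gm]=t^\gamma\vcomp\subseteq t^\gamma\vring=\vring[\gm]$, is false: by Axiom~\ref{AX:CB} one has $\vcomp\cap\vring=(0)$ and $\vcomp\oplus\vring=\bR$, so $\vcomp\not\subseteq\vring$ (every nonzero element of $\vcomp$ has negative value, and every nonzero element of $\vcomp[\gm]$ has value $<\gamma$ — think of $\kf((G^{<0}))$ in the power series model). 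Multiplying by $t^\gamma$ cannot produce the asserted inclusion, and even if it did you would only obtain $\val(x_1)\geq\gamma$, hence $\val(x)\geq\gamma$, which is no contradiction with $\val(x)=\gamma$. So as written the injectivity step (equivalently, $\min\supp x\leq\val(x)$) is not proved.

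The fact you want there, that $a_\gamma\neq 0$ when $\gamma=\val(x)<\infty$, is true, but it needs the \emph{directness} of the sum rather than a containment: since $\val(x)=\gamma$ we have $x\in\vring[\gm]$, and also $a_\gamma t^\gamma+x_3\in\vring[\gm]$; hence $x_1=x-(a_\gamma t^\gamma+x_3)\in\vcomp[\gm]\cap\vring[\gm]=(0)$ by Axiom~\ref{AX:CB}, so $x=a_\gamma t^\gamma+x_3$, and if $a_\gamma$ were $0$ then $\val(x)=\val(x_3)>\gamma$, a contradiction. (This is exactly the justification the paper leaves implicit when it simply notes that $a_\gamma\neq 0$.) With that repair, your proof goes through and matches the paper's in its first part while differing harmlessly in the second.
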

\begin{proof}
Let $x \in \bR$, and $\gamma := \val(x)$.
Note that if $\gamma < \infty$, then $a_\gamma \neq 0$.
Therefore, $\gamma \in \supp x$.
Hence, if $\xs = 0$, then $\gamma = \infty$, that is, $x = 0$, and
$\fsum$ is injective.
Conversely, let $\lambda < \gamma$, and assume for contradiction that
$a_\lambda \neq 0$.
Then, $x = x_1 + a_\lambda t^\lambda + x_3$, where $x_1\in\vcomp[\lm]$
and $x_3\in\vring[\lp]$.
Therefore, $\gamma=\val(x)= \min\{v(x_1), \lambda, v(x_3)\} \leq
\lambda$, a contradiction.
\end{proof}
\sn
We now prove that the image of $\fsum$ is contained in~$\kG$.
\begin{proposition}
If $x\in \bR$, then $\supp \xs$ is well-ordered.
\end{proposition}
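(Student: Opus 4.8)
The goal is to show that for every $x \in \bR$, the support $\supp \xs$ is a well-ordered subset of $\vg$. The plan is to argue by contradiction: suppose $\supp x$ contains a strictly decreasing sequence $\gamma_1 > \gamma_2 > \gamma_3 > \cdots$. The natural strategy is to use the decomposition defining $\fsum$ to ``peel off'' the leading term and exploit the filtration of $\bR$ by the ideals $\vring[\Lambda]$ together with the nesting of the spaces $\vcomp[\Lambda]$ from Axiom~\ref{AX:CD}.

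Concretely, first I would set $\Lambda := \{\gamma_n : n \in \Nat\}^-$, the largest cut lying below all the $\gamma_n$. Decompose $x = x_1 + x_2$ with $x_1 \in \vcomp[\Lambda]$ and $x_2 \in \vring[\Lambda]$ via Axiom~\ref{AX:CB}. The key point is that $\fsum$ is additive, so $\xs = \fsum x_1 + \fsum x_2$; and since each $\gamma_n$ satisfies $\gamma_n > \Lambda$, i.e.\ $\gamma_n \in \Lambda^R$, one checks using Lemma~\ref{LEM:COMPL-BASIC}--4 (the decomposition lemma, with $\Gamma = \lambda^-$ for $\lambda < \Lambda$) that the $\gamma_n$-coefficient of $\fsum x_1$ vanishes for all $n$. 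Hence all the $\gamma_n$ lie in $\supp x_2$, so we may replace $x$ by $x_2 \in \vring[\Lambda]$ and assume $\val(x) \ge \Lambda$ while $\inf_n \gamma_n = \Lambda$ (as a cut). Now if $\Lambda^R$ has a minimum $\mu = \min\supp x_2 = \val(x_2)$, then since the $\gamma_n$ decrease to $\Lambda$, cofinitely many of them are strictly below $\mu$, forcing their coefficients to be zero by the injectivity lemma's argument ($\val(x_2) = \min(\supp x_2)$) --- contradiction. So $\Lambda^R$ has no minimum.

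In the remaining case $\Lambda^R$ has no minimum, I would use continuity ``from below'': by Lemma~\ref{LEM:COMPL-BASIC}--7, $\vcomp[\Lambda] = \bigcap_{\gamma \in \Lambda^R}\vcomp[\gamma^\pm]$, and correspondingly $\vring[\Lambda] = \bigcup_{\gamma > \Lambda}\vring[\gamma^{\pm}]$ by item~\ref{EN:RING-CONTINUOUS} of the Remark on $\vring[\Lambda]$. Thus $x_2 \in \vring[\delta^-]$ for some $\delta$ with $\Lambda < \delta$, i.e.\ $\val(x_2) \ge \delta > \Lambda$. But again $\gamma_n \to \Lambda$ from above means $\gamma_n < \delta$ for $n$ large, so $\gamma_n < \val(x_2) = \min\supp x_2$ for those $n$, contradicting $\gamma_n \in \supp x_2$. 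Either way we reach a contradiction, so no infinite descending sequence exists in $\supp x$, and $\supp x$ is well-ordered.

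The main obstacle I anticipate is the bookkeeping in the first reduction step: verifying cleanly that passing from $x$ to its component $x_2 \in \vring[\Lambda]$ does not discard any of the $\gamma_n$ from the support. This needs the precise statement that for $\lambda < \Lambda$, writing $x = x_1' + ct^\lambda + x_3'$ with $x_1' \in \vcomp[\lambda^-]$, $x_3' \in \vring[\lambda^+]$, and using $x_1 \in \vcomp[\Lambda] \subseteq \vcomp[\lambda^-]$ and $x_2 \in \vring[\Lambda] \subseteq \vring[\lambda^+]$, the uniqueness of decomposition (Lemma~\ref{LEM:COMPL-BASIC}--3 or Axiom~\ref{AX:CB}) forces the $\lambda$-coefficient of $x$ and of $x_2$ to agree --- so in fact $\fsum x$ and $\fsum x_2$ have the same coefficients at every $\lambda > \Lambda$. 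Once that is pinned down, the two case analyses above are short.
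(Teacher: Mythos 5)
Your argument is in substance the paper's own: both proofs decompose $x = x_1 + x_2$ at the cut $\Lambda$ determined by the supposed strictly decreasing sequence $(\gamma_n)$, use the uniqueness coming from Axiom~\ref{AX:CB} to see that the $\gamma_n$ persist in the component $x_2 \in \vring[\Lambda]$, and then contradict $\val(x_2) > \Lambda$ by exhibiting some $\gamma_n$ below $\val(x_2)$. The paper does the last two steps in one stroke, comparing the decompositions of $x$ at $\Lambda$ and at a single well-chosen $\gamma_i < \val(x_2)$, whereas you argue coefficientwise and invoke $\val(x_2)=\min\supp x_2$ from the preceding lemma; this is the same mechanism, and your main line is sound.

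Two local slips should be repaired. First, in your closing paragraph the inclusion $\vring[\Lambda] \subseteq \vring[\lp]$ is backwards for the relevant range of $\lambda$: for $\lambda > \Lambda$ one has $\vring[\lp] \subseteq \vring[\Lambda]$, and your inequalities ``$\lambda < \Lambda$'' should read ``$\lambda > \Lambda$''. The correct justification of the key step is the one already contained in your first paragraph and needs only the first inclusion: for $\lambda > \Lambda$ we have $x_1 \in \vcomp[\Lambda] \subseteq \vcomp[\lm]$, so by uniqueness of the decomposition $\bR = \vcomp[\lm]\oplus t^{\lambda}\kf\oplus\vring[\lp]$ the $\lambda$-coefficient of $\fsum x_1$ is $0$, and additivity of $\fsum$ then gives that $\fsum x$ and $\fsum x_2$ have the same coefficient at every $\lambda > \Lambda$. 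Second, your treatment of the case in which $\Lambda^R$ has a minimum $\mu$ does not work as written: every $\gamma_n$ lies in $\Lambda^R$, so none of them (let alone cofinitely many) is strictly below $\mu$, and there is no reason why $\mu$ should equal $\val(x_2)$. The point is rather that this case is vacuous --- this is the paper's unproved remark that $\Lambda^R$ has no minimum: by maximality of the cut $\Lambda$ the $\gamma_n$ are coinitial in $\Lambda^R$, so a minimum $\mu$ would satisfy $\gamma_i \leq \mu$ for some $i$, while $\gamma_{i+1} < \gamma_i \leq \mu$ and $\gamma_{i+1}\in\Lambda^R$ contradict minimality. With these repairs your proof is correct and essentially identical to the paper's.
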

\begin{proof}
Suppose not.
Let $\alpha := \val x$.
Let $\gamma_1 > \gamma_2 > \ldots \in \supp \xs$.
Since $\supp \xs$ is bounded below by $\alpha$, and $\vgh$ is complete,
there exists
\[
\Lambda := \inf \set{\gamma_i^+: i\in\Nat} = \inf\set{\gamma_i^-: i \in\Nat}
\ = \set{\gamma_i: i\in\Nat}^- \in \vgh.\]
Moreover, $\Lambda^R$ has no minimum, and $\alpha < \Lambda$.
Decompose $x = x_1+x_2$, $x_1\in\vcomp[\Lambda]$, $x_2\in\vring[\Lambda]$.
Let $\beta:=\val(x_2) > \alpha$, and let $\gamma:=\gamma_i$ such that
$\gamma < \beta$ (it exists, because $\Lambda^R$ has no minimum).
Write $x = y_1 + a_{\gamma} t^{\gamma} + y_2$, $y_1\in\vcomp[\gm]$,
$\val(y_2)>\gamma$, and $a_{\gamma} \neq 0$.
However, $x_1 \in \vcomp[\Lambda] \subseteq \vcomp[\gm]$, and $x_2 \in
\vring[\beta^-]\subseteq\vring[\gm]$, therefore, by~\ref{AX:CB},
$x_1 = y_1$, and $x_2 = a_{\gamma} t^{\gamma} + y_2$, a contradiction
to $\val(x_2) = \beta > \gamma$.
\end{proof}
\mn
Since the minimum of $\supp\xs$ is exactly $\val x$, we have that:
\begin{corollary}
The map  $\fsum$ is
an embedding of (additive) valued groups.
\end{corollary}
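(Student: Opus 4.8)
\emph{Proof proposal.} The plan is simply to assemble the facts already established. By construction (see the definition of the map $\fsum$) the assignment $x\mapsto\xs$ is a homomorphism of additive groups $\bR\to\KG$, where $\KG$ carries point\nobreakdash-wise addition. The Lemma immediately preceding the Proposition shows that this homomorphism is injective, so $\fsum$ is an embedding of additive groups onto its image. The Proposition then shows that for every $x\in\bR$ the set $\supp\xs$ is well\hyph ordered; hence $\xs$ is not merely an element of $\KG$ but lies in the field of generalized power series $\kGf$. Consequently $\fsum$ is an additive group embedding of $\bR$ into the valued field $\kGf$.

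It remains to observe that $\fsum$ respects the two valuations. On $\kGf$ the valuation of a series $s$ is $\min\supp s$ (with $\min\emptyset=\infty$), and by the ``moreover'' clause of the Lemma we have $\min\supp\xs=\val(x)$ for every $x\in\bR$; that is, $v(\fsum x)=v(x)$. Combining this with the previous paragraph yields that $\fsum$ is an embedding of valued additive groups, which is exactly the assertion of the corollary.

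I expect no real obstacle here: all the work has already been done. The one point worth flagging is that ``embedding of valued groups'' requires the \emph{equality} $v(\fsum x)=v(x)$ rather than the inequality $v(\fsum x)\ge v(x)$ that a bare group homomorphism would give; this equality is supplied precisely by the strengthened conclusion $\val(x)=\min(\supp x)$ of the Lemma, together with the fact (from the Proposition) that the support is well\hyph ordered so that the target is genuinely $\kGf$ and not just the ambient group $\KG$.
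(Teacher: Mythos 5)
Your proposal is correct and follows exactly the paper's (very terse) argument: the corollary is presented there as an immediate consequence of the injectivity lemma with its clause $\val(x)=\min(\supp x)$ and of the proposition that $\supp \xs$ is well-ordered, which is precisely how you assemble it. Your explicit flagging of the equality $v(\fsum x)=v(x)$ versus a mere inequality is a helpful clarification but not a departure from the paper's route.
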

\mn
For the rest of this section, we will assume that $\vcompf$ is a
(weak) tower of complements for~$\bR$.
\begin{definizione}\label{MU}
Set:
$\mu(x) := (\supp x)^+\in\vgh.$
\end{definizione}
\n
Note that $\mu(x)=-\infty$ if and only if $x=0$.
\begin{lemma}\label{LEM:MU}
Let $\Lambda\in\vgh$ and $x,y \in \bR$.
\begin{enumerate}
\makeatletter
\renewcommand{\p@enumi}{\ref{LEM:MU}--}
\makeatother
\item\label{EN:MU-1}
If $\mu(x) = \Lambda$, then $x \in \vcomp[\Lambda]$.
\item\label{EN:MU-2} In general, $x \in \vcompL$ if and only if $\Lambda \geq \mu x$.
Equivalently, $\vcompL = \set{ x: \mu x \leq \Lambda}$.
\item $\mu x =\inf \set{ \Lambda: x \in \vcompL}$.
\item\label{EN:MU-SUM} $\mu(x + y) \leq \max (\mu x, \mu y)$.
\item\label{EN:MU-MULT} If $\vcompf$ is a \emph{tower of complements},
then $\mu(x y)
\leq \mu x + \mu y$.
\end{enumerate}
\end{lemma}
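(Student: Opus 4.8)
\emph{Overall plan.} The plan is to derive all five items from one fact about the coefficients $a_\gamma$ that define the embedding $\fsum$, together with Axioms~\ref{AX:CA}--\ref{AX:CE} and the cut arithmetic of Section~\ref{ordg}. Call the fact the \emph{vanishing claim}: for $x\in\bR$ and $\gamma\in\vg$, if $x\in\vcomp[\gm]$ then $a_\gamma=0$; equivalently, $\gamma\in\supp x$ forces $x\notin\vcomp[\gm]$. To prove this I would use the listed properties of the fractional ideals: $\vring[\gm]=t^\gamma\vring=t^\gamma\kf\oplus t^\gamma\vring[0^+]=t^\gamma\kf\oplus\vring[\gp]$ (splitting off the residue field), so the three-term decomposition $\bR=\vcomp[\gm]\oplus t^\gamma\kf\oplus\vring[\gp]$ used to define $a_\gamma$ regroups as the Axiom~\ref{AX:CB} splitting $\bR=\vcomp[\gm]\oplus\vring[\gm]$, whose $\vring[\gm]$-part is $a_\gamma t^\gamma+x_3$. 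If $x\in\vcomp[\gm]$, uniqueness in Axiom~\ref{AX:CB} gives $a_\gamma t^\gamma+x_3=0$, and since $\val(x_3)>\gamma=\val(a_\gamma t^\gamma)$ whenever $a_\gamma\neq0$, this forces $a_\gamma=0$.

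\emph{The two substantive items.} Items~\ref{EN:MU-1} and~\ref{EN:MU-2} (the ``only if'' direction) are where the work lies; both go through the vanishing claim and the equivalences $\gamma>\Lambda\Leftrightarrow\Lambda\leq\gm$ and $(\supp x)^+\leq\Lambda\Leftrightarrow\supp x\subseteq\Lambda^L$ from Section~\ref{ordg}. For~\ref{EN:MU-2}, ``only if'': given $x\in\vcompL$ (trivial if $x=0$), take any $\gamma\in\supp x$; if $\gamma>\Lambda$ then $\Lambda\leq\gm$, so $x\in\vcomp[\gm]$ by Axiom~\ref{AX:CD}, contradicting the vanishing claim; hence $\gamma<\Lambda$, so $\supp x\subseteq\Lambda^L$, that is, $\mu x\leq\Lambda$. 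For~\ref{EN:MU-1}: assume $\mu x=\Lambda$, write $x=x_1+x_2$ with $x_1\in\vcomp[\Lambda]$, $x_2\in\vring[\Lambda]$ (Axiom~\ref{AX:CB}), and suppose $x_2\neq0$; put $\beta:=\val(x_2)$, so $\beta>\Lambda$, whence $x_1\in\vcomp[\Lambda]\subseteq\vcomp[\beta^-]$ (Axiom~\ref{AX:CD}) and $x_2\in\vring[\beta^-]$. Expanding $x_2=b\,t^\beta+x_3'$ with $b\in\kf$, $x_3'\in\vring[\beta^+]$, we must have $b\neq0$ (otherwise $\val(x_2)>\beta$), so $x=x_1+b\,t^\beta+x_3'$ is the defining decomposition for $a_\beta$, giving $a_\beta=b\neq0$ and $\beta\in\supp x$ with $\beta>\Lambda$, which contradicts $\mu x=\Lambda$. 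Hence $x_2=0$ and $x\in\vcomp[\Lambda]$.

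\emph{The formal items and the obstacle.} The rest follows mechanically. For~\ref{EN:MU-2}, ``if'': if $\Lambda\geq\mu x$ then $x\in\vcomp[\mu x]\subseteq\vcompL$ by item~\ref{EN:MU-1} and Axiom~\ref{AX:CD}. The third item is then immediate: $\set{\Lambda:x\in\vcompL}=\set{\Lambda:\Lambda\geq\mu x}$, which is nonempty (as $\vcomp[+\infty]=\bR$) and has infimum $\mu x$. For~\ref{EN:MU-SUM}: by~\ref{EN:MU-2} both $x$ and $y$ lie in the $\kf$-subspace $\vcomp[\max(\mu x,\mu y)]$ (Axiom~\ref{AX:CA}), hence so does $x+y$, and~\ref{EN:MU-2} yields $\mu(x+y)\leq\max(\mu x,\mu y)$. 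For~\ref{EN:MU-MULT}: if $\vcompf$ is a tower of complements, then by~\ref{EN:MU-2} and Axiom~\ref{AX:CE}, $xy\in\vcomp[\mu x]\vcomp[\mu y]\subseteq\vcomp[\mu x+\mu y]$, so $\mu(xy)\leq\mu x+\mu y$ by~\ref{EN:MU-2}. I expect the only place needing care to be the bookkeeping in the vanishing claim and in~\ref{EN:MU-1}, namely correctly matching the three-term decomposition $\bR=\vcomp[\gm]\oplus t^\gamma\kf\oplus\vring[\gp]$ with the two-term Axiom~\ref{AX:CB} splitting and tracking which summand an element lands in; after that, every step is a one-line deduction.
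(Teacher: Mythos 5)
Your proposal is correct and follows essentially the same route as the paper: parts~\ref{EN:MU-1} and~\ref{EN:MU-2} are obtained from the uniqueness of the Axiom~\ref{AX:CB} splitting combined with the three-term decomposition defining $a_\gamma$ (your ``vanishing claim'' is just a mild repackaging of the paper's argument for the $\Rightarrow$ direction of~\ref{EN:MU-2}, regrouping $a_\gamma t^\gamma + x_3$ into the $\vring[\gamma^-]$-component instead of decomposing $x_1$ a second time), and parts 3--5 are deduced formally from~\ref{EN:MU-2} via Axioms~\ref{AX:CA}, \ref{AX:CD} and~\ref{AX:CE} exactly as in the paper. No gaps.
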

\begin{proof}
1. \ \
Decompose $x = x_1+ x_2$, $x_1\in\vcomp[\Lambda]$, $x_2\in\vring[\Lambda]$.
If, for contradiction, $x_2\neq 0$, then $\val(x_2) \in \Lambda^R$.
However, $\val(x_2) \in \supp x$, a contradiction.
\mn
2.\ \ $\Leftarrow$) By~\ref{EN:MU-1} and~\ref{AX:CD}.\\
$\Rightarrow$) If, for contradiction, $x\in\vcomp[\Lambda]$ and $\Lambda < \mu x$,
then there exists $\gamma\in\supp x$ such that $\gamma>\Lambda$.
Decompose $x = x_1 + c_\gamma t^\gamma + x_2$.
Decompose $x_1 = z_1 + z_2$, $z_1\in\vcomp[\Lambda]$, $z_2 \in \vring[\Lambda] \cap \vcomp[\gm]$.
Hence, $x = z_1 + z_2 + c_\gamma t^\gamma + x_2$.
Since $x\in\vcomp[\Lambda]$, $z_2 + c_\gamma t^\gamma + x_2 = 0$.
Thus, $z_2 \in\vcomp[\gm]\cap\vring[\gm] = (0)$, so $c_\gamma = 0$,
contradicting the fact that
$\gamma\in\supp x$.
\mn
3.\ \ This is a rewording of \ref{EN:MU-2}.
\mn
4.\ \ This follows from Axiom~\ref{AX:CA}, plus \ref{EN:MU-2}.
\mn
5.\ \
If either $x=0$ or $y=0$, the conclusion is trivial.
By~\ref{EN:MU-1}, $x\in\vcomp[\mu x]$, $y\in\vcomp[\mu y]$,
therefore $xy \in \vcomp[\mu x + \mu y]$ by \ref{AX:CE}.
This, together with \ref{EN:MU-2}, implies that $\mu(xy) \leq \mu x + \mu y$.
\qedhere
\end{proof}
Assume now that $\bR=\KF$ and that $\vcompf$ is multiplicative.
Let $\factor$ be the factor set of\/ $t$. We shall show that $\fsum$ is
an embedding of valued fields in the
power series field, with multiplication twisted by~$\factor$.
\begin{thm}\label{THM:EMBEDDING}
For every $x,y \in \KF$, $\fsum(xy) = (\fsum x)(\fsum y)$ (with the
multiplication of $\kGf$). Therefore, $\fsum$ is a truncation-closed
embedding of valued fields of $\KF$ in $\kGf$.
\end{thm}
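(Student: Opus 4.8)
The plan is to prove $\fsum$ is multiplicative by comparing, coefficient by coefficient, $\fsum(xy)$ with $(\fsum x)(\fsum y)=\sum_\gamma\bigl(\sum a_\alpha b_\beta\factor[\alpha,\beta]\bigr)t^\gamma$, where for each $\gamma$ the inner sum ranges over the finitely many \emph{contributing} pairs $(\alpha,\beta)$ with $\alpha\in\supp x$, $\beta\in\supp y$, $\alpha+\beta=\gamma$. Two preliminary facts do most of the work. First, $\fsum(t^\gamma x)=t^\gamma\fsum(x)$ for every $\gamma\in\vg$: decomposing $x$ at level $\gamma'-\gamma$ and multiplying through by $t^\gamma$, Axiom~\ref{AX:CF}, the identity $\vring[\gamma+\Lambda]=t^\gamma\vring[\Lambda]$, and the uniqueness in Axiom~\ref{AX:CB} together identify the $t^{\gamma'}$-coefficient of $t^\gamma x$ with that of $t^\gamma\fsum(x)$; by $\kf$-linearity, $\fsum(at^\gamma x)=at^\gamma\fsum(x)$, so the theorem holds whenever one factor lies in $\kf(G,f)$. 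Second, a \emph{truncation lemma}: if $z=z'+z''$ with $z'\in\vcomp[\Lambda]$ and $z''\in\vring[\Lambda]$, then $\mu(z')\le\Lambda$ (Lemma~\ref{LEM:MU}) forces $\supp z'\subseteq\Lambda^L$ and $\val(z'')>\Lambda$ forces $\supp z''\subseteq\Lambda^R$, so $\fsum z'$ is precisely the $\Lambda^L$-truncation of $\fsum z$. (This already yields that $\fsum(\KF)$ is truncation closed, since for $s=\fsum x$ the truncation $s_{<g}$ is $\fsum$ applied to the $\vcomp[g^-]$-part of $x$, an element of $\bR=\KF$.)

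For multiplicativity I would induct on the natural (Hessenberg) sum $\operatorname{ot}(\supp x)\oplus\operatorname{ot}(\supp y)$ of the order types of the (well-ordered) supports. If $x=0$ or $y=0$ there is nothing to prove. If $\supp x$ has a largest element $\alpha$ (symmetrically if $\supp y$ does), put $x_{<}:=x-a_\alpha t^\alpha$; by the monomial case $\fsum x_{<}=\fsum x-a_\alpha t^\alpha$, so $\supp x_{<}=\supp x\setminus\{\alpha\}$ has strictly smaller order type, and the inductive hypothesis gives $\fsum(x_{<}y)=\fsum(x_{<})\fsum(y)$; combined with $\fsum(a_\alpha t^\alpha y)=a_\alpha t^\alpha\fsum(y)$ and additivity of $\fsum$, this gives $\fsum(xy)=\fsum(x)\fsum(y)$.

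The remaining and crucial case is that neither $\supp x$ nor $\supp y$ has a largest element. Fix $\gamma$ and suppose first that $\gamma$ has a contributing pair; let $\alpha_0$ (resp.\ $\beta_0$) be the largest first (resp.\ second) coordinate occurring in one, and let $x_0$ (resp.\ $y_0$) be the part of $x$ (resp.\ $y$) with support $\{\,\delta\in\supp x:\delta\le\alpha_0\,\}$ (resp.\ $\le\beta_0$), with $r:=x-x_0$, $s:=y-y_0$, so $\val(r)>\alpha_0$ and $\val(s)>\beta_0$. Since the supports have no largest element, $x_0$ and $y_0$ are \emph{proper} truncations, hence of strictly smaller order type, so the inductive hypothesis applies to $(x_0,y_0)$, $(x_0,s)$ and $(r,y_0)$. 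From $xy=x_0y_0+x_0s+ry_0+rs$ and additivity, the $t^\gamma$-coefficient of $\fsum(xy)$ is the sum of those of the four summands; $\fsum(rs)$ contributes nothing since $\val(rs)>\alpha_0+\beta_0\ge\gamma$; by induction $\fsum(x_0s)=\fsum(x_0)\fsum(s)$ has support in $\supp x_0+\supp s$, and $\gamma$ is not there because any $\gamma=\delta+\epsilon$ with $\delta\in\supp x$, $\epsilon\in\supp y$ is a contributing pair, forcing $\epsilon\le\beta_0$, hence $\epsilon\notin\supp s$ --- and likewise for $\fsum(ry_0)$. So the $t^\gamma$-coefficient of $\fsum(xy)$ equals that of $\fsum(x_0y_0)=\fsum(x_0)\fsum(y_0)$, which is the same finite sum as for $\fsum(x)\fsum(y)$ because $x_0,y_0$ retain every contributing exponent. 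If instead $\gamma$ has no contributing pair, one shows the $t^\gamma$-coefficient of $\fsum(xy)$ is $0$: if some $\delta\in\supp x$, $\epsilon\in\supp y$ satisfy $\delta+\epsilon\ge\gamma$, truncate $x,y$ just above $\delta,\epsilon$ and run the same argument (the lower proper truncations contribute no $t^\gamma$ for lack of contributing pairs, and the top product has valuation $>\gamma$); otherwise $\mu(xy)\le\mu(x)+\mu(y)$ already puts $\gamma$ above $\supp(xy)$.

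Since the $t^\gamma$-coefficients agree for every $\gamma$, $\fsum(xy)=\fsum(x)\fsum(y)$; being in addition $\kf$-linear, injective and valuation preserving, $\fsum$ is an embedding of valued fields, its image is truncation closed by the truncation lemma, and it fixes $\kf(G,f)$ because it fixes $\kf$ and every $t^\gamma$ and is multiplicative. The genuine difficulty is exactly the ``limit'' case of the induction --- supports with no largest element --- where one cannot discard a small tail of $x$ or $y$ and conclude by continuity; the device that resolves it is to pin down, for each target exponent $\gamma$, the finitely many contributing exponents, truncate $x$ and $y$ precisely at them, and apply the inductive hypothesis to those proper truncations.
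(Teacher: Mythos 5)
Your proof is correct, and it rests on the same core idea as the paper's -- transfinite induction on the (Hessenberg sum of the) order types of $\supp x$ and $\supp y$, decomposing $x$ and $y$ so that the inductive hypothesis covers the pieces whose order type drops and a valuation estimate handles the rest -- but the decomposition and the logical shape of the argument are genuinely different. The paper argues by contradiction from a minimal counterexample: it isolates a single critical exponent $\gamma$ (the first place $\fsum(xy)$ and $(\fsum x)(\fsum y)$ disagree), chooses \emph{any} $\alpha,\beta$ with $\alpha+\beta=\gamma$, $\alpha<\mu x$, $\beta<\mu y$ (not necessarily lying in the supports), performs the three-way split $x=x_1+a_\alpha t^\alpha+x_2$, $y=y_1+b_\beta t^\beta+y_2$, and in a short algebraic rearrangement writes $\fsum x\,\fsum y-\fsum(xy)$ as a quantity of value $>\gamma$, contradicting the choice of $\gamma$. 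You instead verify the identity coefficient-by-coefficient, with an explicit case split (one support has a largest element vs.\ neither does) and a two-way truncation of each factor at the \emph{maximal contributing exponents} $\alpha_0,\beta_0$, which depend on $\gamma$ and need not sum to $\gamma$. The paper's route buys concision: no case analysis, a single critical $\gamma$, and the contradiction drops out in one line. Yours buys transparency: it exhibits exactly which (finitely many) pairs contribute to each coefficient, it isolates the monomial case as a reusable lemma, and your ``truncation lemma'' delivers the truncation-closedness of the image as an immediate byproduct rather than as a side remark -- which is nice given that the theorem's statement promises a truncation-closed embedding.
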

\begin{proof}
If not, let $x,y\in\KF$ of minimal length%
\footnote{The length of $x$ is the order type of $\supp \fsum x$.}
such that $\fsum(xy) \neq (\fsum x)(\fsum y)$.
Let $z := \fsum x \fsum y$, and $\gamma\in\vg$ minimal with
$\fsum(xy)(\gamma) \neq z (\gamma)$.%
\footnote{We are using the notation $(\sum a_\lambda t^\lambda)(\gamma)
:= a_\gamma$.}
Since $\mu (xy) \leq \mu x + \mu y$ and $\mu(\fsum x \fsum y) \leq \mu
(\fsum x) + \mu (\fsum y) = \mu x + \mu y$, $\gamma < \mu x + \mu y$.
Let $\alpha,\beta\in\vg$ such that $\alpha < \mu x$, $\beta < \mu y$ and
$\alpha + \beta = \gamma$.
Decompose $x = x_1 + a_\alpha t^\alpha + x_2$, $y
= y_1 + b_\beta t^\beta + y_2$.
Thus,
\begin{eqnarray*}
x y & = & (x_1 + a_\alpha t^\alpha + x_2)(y_1 + b_\beta t^\beta + y_2)\\
 & = &
x_1 y + (a_\alpha t^\alpha + x_2) y_1 + \factor[\alpha,\beta]
a_\alpha b_\beta t^{\gamma} + \smallo{t^{\gamma}}
\footnote{Here $\smallo{t^{\gamma}}$ denotes an element of value
$>\gamma$.}.
\end{eqnarray*}
Since $\alpha < \mu x$, $a_\alpha t^\alpha + x_2 \neq 0$, and similarly
$b_\beta t^\beta + y_2 \neq 0$.
By minimality of $x$ and $y$, $\fsum(x_1 y) = \fsum x_1 \fsum y$ and
$\fsum\Pa{(a_\alpha t^\alpha + x_2) y_1} = \fsum(a_\alpha t^\alpha + x_2)
\fsum y_1$.
Moreover, $\fsum\Pa{\factor[\alpha,\beta] a_\alpha b_\beta t^{\gamma}} =
\factor[\alpha,\beta] a_\alpha b_\beta t^{\gamma}$
by definition of $\fsum$, and $\fsum\Pa{\smallo{t^{\gamma}}} =
\smallo{t^{\gamma}}$.
Therefore,
\begin{eqnarray*}
\fsum(x y) & = & \fsum(x_1 y) + \fsum\Pa{(a_\alpha t^\alpha + x_2) y_1} +
\fsum\Pa{\factor[\alpha,\beta] a_\alpha b_\beta t^{\gamma}} +
\fsum(\smallo{t^{\gamma}})\\
 & = & \fsum x_1 \fsum y + \fsum(a_\alpha t^\alpha + x_2) \fsum y_1
+ a_\alpha t^\alpha b_\beta t^{\beta} + \smallo{t^{\gamma}} \\
 & = & \fsum x \fsum y - a_\alpha t^\alpha \fsum y_2 - \fsum x_2
(b_\beta t^\beta + \fsum y_2) + \smallo{t^{\gamma}}.
\end{eqnarray*}
Thus, $z - \fsum(x y) = \smallo{t^{\gamma}}$, a contradiction.
\end{proof}
\begin{corollary}\label{COR:EMBEDDING}
Let $\factor$ be the factor set of $t$. There is a one-to-one
correspondence between towers of complements for $\KF$ and
truncation-closed embeddings of $\KF$ in~$\kGf$.
\end{corollary}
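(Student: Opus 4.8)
The plan is to write down the two obvious maps between the two sets and check that they are mutually inverse. Let $\Phi$ send a ($t$-compatible) tower of complements $\vcompf$ for $\KF$ to the embedding $\fsum = \fsum_{\vcompf}$ constructed above. By Theorem~\ref{THM:EMBEDDING}, $\Phi(\vcompf)$ is a truncation-closed embedding of valued fields $\KF \hookrightarrow \kGf$; moreover, a direct coefficient-by-coefficient computation from the defining decomposition (using Axiom~\ref{AX:CC} and Lemma~\ref{LEM:COMPL-BASIC}) shows that $\fsum$ fixes $\kf$ and every $t^{\gamma}$, so, being a ring homomorphism and $\kf(G,\factor)$ being generated as a field by $\kf \cup \set{t^{\gamma} : \gamma \in \vg}$, $\Phi(\vcompf)$ is the identity on $\kf(G,\factor)$ --- hence a truncation-closed embedding over $\kf(G,\factor)$, preserving the section $t$ and the residue embedding. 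Let $\Psi$ send such an embedding $\phi$ to $\Psi(\phi) := \set{\phi^{-1}\Pa{\kf[[\Lambda^L,\factor]]} : \Lambda \in \vgh}$, which by the Main example is a ($t$-compatible) tower of complements for $\KF$. It then remains to verify $\Psi \circ \Phi = \id$ and $\Phi \circ \Psi = \id$.

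For $\Psi \circ \Phi = \id$, I would fix a tower $\vcompf$, set $\phi := \Phi(\vcompf)$, and show $\phi^{-1}\Pa{\kf[[\Lambda^L,\factor]]} = \vcomp[\Lambda]$ for every $\Lambda \in \vgh$. Since $\supp \fsum x = \supp x$ by construction, $x$ lies in $\phi^{-1}\Pa{\kf[[\Lambda^L,\factor]]}$ iff $\supp x \subseteq \Lambda^L$; using that $S^{+}$ is by definition the least cut whose left part contains $S$, together with $S \subseteq (S^{+})^L$ and monotonicity of left parts, this is equivalent to $\mu(x) = (\supp x)^{+} \leq \Lambda$, hence, recalling $\vcomp[\Lambda] = \set{x : \mu x \leq \Lambda}$ (Lemma~\ref{LEM:MU}), to $x \in \vcomp[\Lambda]$. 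Thus $\Psi(\Phi(\vcompf)) = \vcompf$.

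For $\Phi \circ \Psi = \id$, I would fix such a $\phi$, set $\vcompf := \Psi(\phi)$, and take any $x \in \KF$. For each $\gamma \in \vg$, consider the decomposition used to define $\fsum_{\vcompf}$, namely $x = x_1 + a_{\gamma} t^{\gamma} + x_3$ with $x_1 \in \vcomp[\gm]$ and $x_3 \in \vring[\gp]$. Then $\phi(x_1) \in \kf[[(\gm)^L,\factor]]$ has support contained in $(-\infty,\gamma)$; since $\phi$ preserves the valuation and $v(x_3) > \gamma$, the series $\phi(x_3)$ has support contained in $(\gamma,+\infty)$; and $\phi(a_{\gamma} t^{\gamma}) = a_{\gamma} t^{\gamma}$ because $\phi$ fixes $\kf(G,\factor)$. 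Applying $\phi$ to the decomposition and reading off the coefficient of $t^{\gamma}$ in $\phi(x) = \phi(x_1) + a_{\gamma} t^{\gamma} + \phi(x_3)$ gives exactly $a_{\gamma}$; as $\gamma$ ranges over $\vg$ this yields $\phi(x) = \sum_{\gamma \in \vg} a_{\gamma} t^{\gamma} = \fsum_{\vcompf}(x)$, so $\Phi(\Psi(\phi)) = \phi$.

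Almost all the substance is already carried by Theorem~\ref{THM:EMBEDDING} and the Main example, so I do not expect a real obstacle. The mild points of care are the order-theoretic bookkeeping translating ``support contained in $\Lambda^L$'' into ``$\mu \leq \Lambda$'', the verification that $\fsum_{\vcompf}$ is the identity on $\kf(G,\factor)$, and --- most importantly for the statement to be clean --- pinning down the exact class of objects in bijection: truncation-closed embeddings of \emph{valued} fields over $\kf(G,\factor)$, equivalently those preserving the valuation, the section $t$, and the residue embedding, so that $\Phi$ and $\Psi$ genuinely land in this class and compose to the identity in both orders.
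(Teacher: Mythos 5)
Your proposal is correct and follows the paper's overall strategy: set up the two natural maps $\Phi:\vcompf\mapsto\fsum_{\vcompf}$ and $\Psi:\phi\mapsto\set{\phi^{-1}\Pa{\kf[[\Lambda^L,\factor]]}}$ and check they are mutually inverse. Your treatment of $\Psi\circ\Phi=\id$ matches the paper's (both reduce it to ``$\supp x\subseteq\Lambda^L$ iff $\mu(x)\leq\Lambda$'' via Lemma~\ref{LEM:MU}). Where you diverge is $\Phi\circ\Psi=\id$: the paper argues by contradiction, picking $x$ of minimal length with $\phi(x)\neq\fsum_{\Psi(\phi)}(x)$ and deriving $v(\phi(x)-\fsum_{\Psi(\phi)}(x))>\mu(x)$, which contradicts the fact that $\phi(x)$ and $\fsum_{\Psi(\phi)}(x)$ lie in the same truncation-closed subspaces; you instead read off each coefficient directly, applying $\phi$ to the decomposition $x=x_1+a_\gamma t^\gamma+x_3$ and observing that $\phi(x_1)$, $a_\gamma t^\gamma$, $\phi(x_3)$ have supports in $(-\infty,\gamma)$, $\set{\gamma}$, $(\gamma,+\infty)$ respectively. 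Your version is more elementary and avoids both the minimal-length well-ordering step and the contradiction; it makes the structure of the argument more transparent, at no cost. One small point of care you handled well: the equality $\phi(a_\gamma t^\gamma)=a_\gamma t^\gamma$ uses that $\phi$ is the identity on $\kf(G,\factor)$, and the support bound for $\phi(x_3)$ uses that $\phi$ preserves the valuation; both are built into the paper's definition of a truncation-closed embedding, so your bookkeeping at the end about pinning down the exact class of morphisms is apt.
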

\begin{proof}
Consider the maps $\phi\mapsto A_\phi$ that maps a truncation-closed
embedding $\phi$ to the corresponding T.o.C. (induced by $\phi$), and
${\cal A}\mapsto {\cal S}_{\cal A}$ that maps a T.o.C.\ ${\cal A}$ to
the truncation-closed embedding induced by it. We want to prove that the
two maps are inverses of each other.
We know that a T.o.C.\ ${\cal A}$ is uniquely determined by the
corresponding function $\mu_{\cal A}$. Let ${\cal B}$ be a T.o.C.
Let us prove that ${\cal A}_{{\cal S}_{\cal B}} = {\cal B}$. We have
that
\[
x\in B[\Gamma]\Longleftrightarrow {\cal S}_{\cal B}(x)\in k[[\Gamma^L,
f]] \Longleftrightarrow x \in {\cal A}_{{\cal S}_{\cal B}}[\Gamma]\>.
\]
Conversely, let $\phi$ be a truncation-closed embedding. Let us prove
that $\alpha:={\cal S}_{A_\phi}=\phi$. Let ${\cal B}:={\cal A}_\phi$.
Assume, for a contradiction, that $\phi$ is not equal to $\alpha$, and
choose $x \in K$ of minimal length (w.r.t.\ the T.o.C.\ ${\cal B}$) such
that $\phi(x)$ is not equal to $\alpha(x)$. Note that
\begin{equation}                            \label{star}
\phi(x) \in k[[\Gamma^L, f]]\Longleftrightarrow x \in B[\Gamma]
\Longleftrightarrow \alpha(x) \in k[[\Gamma^L, f]]\>.
\end{equation}
Let $\Gamma < \mu_{\cal B}(x)$; split $x = x_1 + x_2$ at $\Gamma$. By
the minimality of $x$, $\phi(x_1) = \alpha(x_1)$. Therefore, $v(\phi(x)
- \alpha(x)) > \Gamma$ for every $\Gamma < \mu_B(x)$, and thus
$v(\phi(x) - \alpha(x)) > \mu(x)$, contradicting (\ref{star}).
\end{proof}

\section{Truncation $\kf$-algebra complements of~$\vring$}
\label{oldopenquestion}
Fix a residue field section $\iota: \kf \to \KF$, which we
assume to be the inclusion map.
Let $t: \KF^* \to G$ be a value group section, with factor set~$f$.
A $\kf$-complement $A$ of~$\vring$ is {\bf compatible with $t$} if
$t^\gamma \in A$ for every $0 > \gamma \in G$.
\sn
Recall that we call a $\kf$-algebra complement $A$ of~$\vring$ a
truncation $\kf$-algebra complement if there is a
truncation-closed embedding $\phi: A
\to \kf ((G, f'))$ (preserving $\iota$)
for some factor set~$f'$, such that $A = \phi^{-1}\Pa{\kf ((G^{<0},
f'))}$. We say that $A$ is a {\bf truncation $\kf$-algebra complement
compatible with $t$} if moreover $\phi(t^\gamma) = t^\gamma$ for every
$\gamma \in G$, and $f = f'$.
\sn
It follows from Corollaries \ref{corto7} and \ref{COR:EMBEDDING}
that $A$~is a truncation $\kf$-algebra complement compatible with~$t$ if
and only if $A = \vcomp[0^-]$, for some \ToC $\vcompf$ compatible
with~$t$.
\sn
Our aim in this section is to find a valued field~$\KF$, with a residue
field section~$\iota$
and a value group section~$t$, and a $\kf$-algebra complement~$A$
compatible with~$t$, such that $A$ is
not a truncation $\kf$-algebra complement compatible with~$t$.
We will leave open the question whether $A$ might be a truncation
$\kf$-algebra complement compatible with a different value group
section~$t'$.
\mn
\begin{definition}
Let $\vcompf := \Pa{A[\Gamma]}_{\Gamma \in \vgh}$ be a family of subsets of~$\KF$, indexed by~$\vgh$.
$\vcompf$~is a $t$-compatible {\bf candidate weak tower of complements}
(\CWToC for short)
if it satisfies the axioms CA, CC, CD, CF, and instead of CB
the following axioms:
\begin{enumerate}
\item[\textbf CB1.] $A[\Lambda] \cap \vring[\Lambda] = (0)$;
\item[\textbf CB2.] $A[0^-] + \vring[0^-] = \KF$;
\item[\textbf CG.] $A[\Gamma] = \bigcup_{\lambda > \Gamma} A[\lambda^-]$.
\end{enumerate}
If in addition $\vcompf$ satisfies the axiom CE,
then we say that $\vcompf$ is multiplicative, or that $\vcompf$ is a
$t$-compatible {\bf candidate tower of complements} (\CToC for short).
\end{definition}
\begin{remark}
Every \WToC is a \CWToC; every \ToC is a \CToC.
\end{remark}
\begin{lemma}
Let $\vcompf$ be a \CWToC.
Then,
\begin{enumerate}
\item $A[\gamma^-] = t^\gamma A[0^-]$;
\item
$A[\gamma^+] = A[\gamma^-]  + t^\gamma \kf = t^\gamma A[0^+]$;
\item $\KF = A[\gamma^-] \oplus \vring[\gamma^-] =
A[\gamma^+] \oplus \vring[\gamma^+] =
A[\gamma^-] \oplus t^\gamma \kf \oplus \vring[\gamma^+]$.
\end{enumerate}
\end{lemma}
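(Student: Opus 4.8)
The plan is to reduce all three parts to the case $\gamma=0$ by translating with $t^\gamma$, using axiom CF, and then to exploit the residue splitting $\vring=\kf\oplus\videal$ (coming from the residue field section, with $\kf\cap\videal=(0)$ since a nonzero element of $\kf$ has value $0$) together with the identities $\vring[0^-]=\vring$, $\vring[0^+]=\videal$, $\vring[\gamma^{\pm}]=t^\gamma\vring[0^{\pm}]$ recorded after the definition of the ideals $\vring[\Lambda]$. Part~1 is then immediate: applying CF with $\Lambda=0^-$ gives $A[\gamma+0^-]=t^\gamma A[0^-]$, and inspecting the definition of $\gamma+\Lambda$ shows $\gamma+0^-=\gamma^-$. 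Likewise CF with $\Lambda=0^+$ gives $\gamma+0^+=\gamma^+$, hence $A[\gamma^+]=t^\gamma A[0^+]$, which is already the second equality claimed in Part~2.

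For the first equality in Part~2, after dividing by $t^\gamma$ and invoking Part~1, it suffices to prove $A[0^+]=A[0^-]+\kf$. The inclusion $\supseteq$ follows from $A[0^-]\subseteq A[0^+]$ (CD, since $0^-\le 0^+$), from $\kf=t^0\kf\subseteq A[0^+]$ (CC), and from the fact that $A[0^+]$ is a $\kf$-subspace (CA). For $\subseteq$ I would take $x\in A[0^+]$; by CB2 write $x=a+r$ with $a\in A[0^-]$ and $r\in\vring[0^-]=\vring$, and split $r=\res r+m$ with $\res r\in\kf$ (the residue, via the section) and $m\in\videal=\vring[0^+]$. Then $m=x-a-\res r$ lies in $A[0^+]$ by CA (each of $x$, $a$, $\res r$ does), while $m\in\vring[0^+]$; so $m=0$ by CB1 at $0^+$, and $x=a+\res r\in A[0^-]+\kf$. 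Multiplying back through by $t^\gamma$ yields $A[\gamma^+]=A[\gamma^-]+t^\gamma\kf$.

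For Part~3 I would assemble the decompositions as follows. First, $A[\gamma^-]\cap\vring[\gamma^-]=(0)$ by CB1, and multiplying CB2 by the unit $t^\gamma$ (using Part~1 and $\vring[\gamma^-]=t^\gamma\vring$, and that multiplication by $t^\gamma$ is a bijection of $\KF$) gives $A[\gamma^-]+\vring[\gamma^-]=\KF$; hence $\KF=A[\gamma^-]\oplus\vring[\gamma^-]$. Next, $\vring[\gamma^-]=t^\gamma\vring=t^\gamma\kf\oplus t^\gamma\videal=t^\gamma\kf\oplus\vring[\gamma^+]$, so $\KF=A[\gamma^-]\oplus t^\gamma\kf\oplus\vring[\gamma^+]$, which is the third decomposition. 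Finally, by Part~2 the first two summands add to $A[\gamma^+]$, and this sum is direct (being a sub-sum of the triple direct sum just displayed, or because $t^\gamma\kf\subseteq\vring[\gamma^-]$ and CB1); regrouping as $\bigl(A[\gamma^-]\oplus t^\gamma\kf\bigr)\oplus\vring[\gamma^+]$ gives $\KF=A[\gamma^+]\oplus\vring[\gamma^+]$.

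The whole argument is essentially bookkeeping with the ideals $\vring[\gamma^{\pm}]$ and axiom CF; the one genuinely load-bearing step is the proof of $A[0^+]\subseteq A[0^-]+\kf$, where, lacking the full complement axiom CB, one has to pull the needed containment out of CB2 together with the residue splitting and then clean up with CB1. I expect no further obstacle; in particular axiom CG is not used anywhere here, since every cut appearing is of the form $\gamma^{\pm}$.
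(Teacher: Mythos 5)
Your proof is correct, and since the paper explicitly omits the argument (``The proofs of the following two lemmata are easy''), there is nothing to compare against: every step — reducing to $\gamma=0$ via CF, obtaining $A[0^+]=A[0^-]+\kf$ from CB2, the residue splitting $\vring=\kf\oplus\videal$, and a cleanup by CB1 at $0^+$, then assembling the three decompositions in Part~3 — is sound, and your observation that CG plays no role here is also accurate. This is essentially the intended routine bookkeeping.
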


Given a \CWToC~$\vcompf$, $x \in \KF$ and $\lambda \in G$, we can
decompose $x = x'_\lambda + a_\lambda(x) t^\lambda + x''_\lambda$
uniquely, in such a way that $x'_\lambda \in A[\lambda^-]$,
$a_\lambda(x) \in \kf$, and $v(x''_\lambda)> \lambda$. We define
$\fsum x := \sum_\lambda a_\lambda(x) t^\lambda$
%
and $\supp x := \supp \fsum x$ as in Section~\ref{EPS}.

\par\smallskip
The proofs of the following two lemmata are easy.

\begin{lemma}
Let $\vcompf$ be a \CWToC. The following are equivalent:
\begin{enumerate}
\item $\vcompf$ is a \WToC;
\item for every $\Gamma \in \vgh$, $A[\Gamma] + \vring[\Gamma] = \KF$;
\item for every $x \in \KF$, $\supp x$ is well-ordered;
\item for every $x \in \KF$, for every $\Gamma \in \vgh$, there exists $\bar\lambda > \Gamma$ such that, for every $\Gamma < \lambda < \bar \lambda$, $x'_\lambda = x'_{\bar \lambda}$;
\item for every $x \in \KF$, for every $\Gamma \in \vgh$, there exists $\bar\lambda > \Gamma$, such that $x'_{\bar \lambda} \in A[\Gamma]$.
\end{enumerate}
\end{lemma}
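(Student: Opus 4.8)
The plan is to prove the cycle (2)$\Rightarrow$(3)$\Rightarrow$(4)$\Rightarrow$(5)$\Rightarrow$(2) and to dispatch the equivalence (1)$\Leftrightarrow$(2) on its own. The latter is essentially a matter of definitions: a \WToC is exactly a \CWToC that moreover satisfies axiom~\ref{AX:CB}, and, since CB1 is already part of being a \CWToC, axiom~\ref{AX:CB} is equivalent to ``$A[\Gamma]+\vring[\Gamma]=\KF$ for every $\Gamma\in\vgh$'', which is (2). For (2)$\Rightarrow$(3): granting (2), $\vcompf$ is a \WToC, so (3) is exactly the Proposition in Section~\ref{EPS} (that $\supp\fsum x$ is well-ordered for every $x\in\KF$); equivalently one reruns that proof verbatim, noting it uses only the existence and uniqueness of the splittings $x=x_1+x_2$ with $x_1\in A[\Lambda]$, $x_2\in\vring[\Lambda]$, which (2) and CB1 provide.

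The technical heart is a single monotonicity claim about the truncations $x'_\lambda$, which I would establish first: \emph{for $x\in\KF$ and $\lambda\in G$, $x'_\lambda$ is the unique $y\in A[\lambda^-]$ with $v(x-y)\ge\lambda$, and $v(x-x'_\lambda)=\min\{\gamma\in\supp x:\gamma\ge\lambda\}$} (read as the value $\infty$, i.e.\ $x=x'_\lambda$, when this set is empty). Uniqueness is CB1 at $\lambda^-$. For the formula, set $w:=x-x'_\lambda$ (so $v(w)\ge\lambda$) and $\mu:=v(w)$; on decomposing $w$ at $\mu$ the ``$A[\mu^-]$-component'' of $w$ must be $0$, because $A[\mu^-]=t^\mu A[0^-]$ while, by CB1 at $0^-$, $A[0^-]$ contains no nonzero element of value $\ge0$, so any nonzero element of $t^\mu A[0^-]$ has value strictly below $\mu$ — incompatible with $v(w)=\mu$. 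Hence the leading term of $w$ is a genuine $t^\mu$-coefficient term, so $\mu\in\supp x$, and the same computation performed at any $\lambda'$ with $\lambda\le\lambda'<\mu$ shows $\lambda'\notin\supp x$. Combining this with axiom~\ref{AX:CD} (which gives $x'_{\lambda_1}\in A[\lambda_1^-]\subseteq A[\lambda_2^-]$) and the uniqueness characterisation, one obtains for $\lambda_1\le\lambda_2$ in $G$: $x'_{\lambda_1}=x'_{\lambda_2}$ iff $\supp x\cap[\lambda_1,\lambda_2)=\emptyset$.

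With the claim in hand the rest is bookkeeping, the only cuts carrying content being the $\Gamma$ whose upper part has no minimum (when $\Gamma=\delta^-$ conditions (4) and (5) hold with $\bar\lambda:=\delta$, since $x'_\delta\in A[\delta^-]=A[\Gamma]$; and $\Gamma=\pm\infty$ is degenerate, with CG already forcing $A[+\infty]=\KF$). For (3)$\Rightarrow$(4): given $x$ and $\Gamma$, let $S:=\{\gamma\in\supp x:\gamma>\Gamma\}$; take $\bar\lambda$ to be any group element $>\Gamma$ if $S=\emptyset$, and $\bar\lambda:=\min S$ otherwise (a minimum existing by (3)); in either case $\supp x\cap(\Gamma,\bar\lambda)=\emptyset$, so $x'_\lambda=x'_{\bar\lambda}$ for all $\Gamma<\lambda<\bar\lambda$ by the claim. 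For (4)$\Rightarrow$(5): the hypothesis yields $x'_{\bar\lambda}=x'_\lambda\in A[\lambda^-]$ for every $\Gamma<\lambda<\bar\lambda$, so $x'_{\bar\lambda}\in\bigcap_{\Gamma<\lambda<\bar\lambda}A[\lambda^-]$; since the $A[\lambda^-]$ increase with $\lambda$ (by~\ref{AX:CD}), the terms with $\lambda\ge\bar\lambda$ impose nothing further, so this intersection equals $\bigcap_{\lambda>\Gamma}A[\lambda^-]=A[\Gamma]$ by axiom CG (in its intersection form). For (5)$\Rightarrow$(2): choose $\bar\lambda>\Gamma$ with $x'_{\bar\lambda}\in A[\Gamma]$; then $x-x'_{\bar\lambda}=a_{\bar\lambda}(x)t^{\bar\lambda}+x''_{\bar\lambda}$ has value $\ge\bar\lambda>\Gamma$, hence lies in $\vring[\Gamma]$, so $x=x'_{\bar\lambda}+(x-x'_{\bar\lambda})\in A[\Gamma]+\vring[\Gamma]$; as $x$ was arbitrary, (2) follows.

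I do not expect a genuine obstacle — the author marks the lemma easy — and the only step requiring a moment's thought is the monotonicity claim, specifically the observation that $A[0^-]$ has no nonzero element of value $\ge0$ (a restatement of CB1 at $0^-$): this is what prevents a truncation from absorbing a sliver of a $t^\mu$-term, and so makes $x'_\lambda$ record precisely the part of $\supp x$ lying below $\lambda$. The remainder is definition-chasing plus the harmless verifications at the degenerate cuts $\pm\infty$.
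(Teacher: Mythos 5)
The paper offers no proof of this lemma (it is declared easy), so there is no argument of the authors to compare with; your proof is correct and supplies the omitted bookkeeping in exactly the framework the surrounding text sets up (the splittings $x=x'_\lambda+a_\lambda(x)t^\lambda+x''_\lambda$ and the induced $\supp x$). Your central claim --- that $x'_\lambda$ is the unique element of $A[\lambda^-]$ with $v(x-x'_\lambda)\geq\lambda$, and that $v(x-x'_\lambda)=\min\bigl(\supp x\cap[\lambda,+\infty)\bigr)$ --- is correctly derived from CB1 together with CF (nonzero elements of $A[\lambda^-]=t^\lambda A[0^-]$ have value strictly below $\lambda$), and with it the cycle $(2)\Rightarrow(3)\Rightarrow(4)\Rightarrow(5)\Rightarrow(2)$ plus the definitional equivalence $(1)\Leftrightarrow(2)$ goes through; the appeal to the proposition of Section~\ref{EPS} for $(2)\Rightarrow(3)$ is legitimate, since $(2)$ together with CB1 restores axiom~\ref{AX:CB} at every cut. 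Two remarks, neither a gap in your reasoning: you read axiom CG as $A[\Gamma]=\bigcap_{\lambda>\Gamma}A[\lambda^-]$, and this is clearly what the authors intend --- the union as printed would force $A[\lambda^-]=A[\Gamma]$ for every $\lambda>\Gamma$, contradicting CD and CF and the uniqueness lemma that follows, so your reliance on the intersection form is the right call; and at $\Gamma=+\infty$ conditions (4) and (5) are literally unsatisfiable because no $\bar\lambda>+\infty$ exists in $G$, a blemish of the statement itself, which your treatment of the degenerate cuts (empty-intersection convention in CG giving $A[+\infty]=\KF$, so that $(5)\Rightarrow(2)$ still closes) resolves in the only sensible way.
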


\begin{lemma}
Let $A^-$ be a $\kf$-complement of~$\vring$ compatible with~$t$.
Then there exists a unique \CWToC $\vcompf$ such that $A[0^-] = A^-$.
$\vcompf$~is multiplicative iff $A^-$ is multiplicative.\\
$\vcompf$~is defined in the following way:
\[
A[\Lambda] := \bigcap_{\gamma > \Lambda} t^\gamma A^-.
\]
\end{lemma}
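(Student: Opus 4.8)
The plan is to show that the displayed family $\vcompf=\bigl(A[\Lambda]\bigr)_{\Lambda\in\vgh}$, with $A[\Lambda]:=\bigcap_{\gamma>\Lambda}t^\gamma A^-$, is a \CWToC\ with $A[0^-]=A^-$, that it is the unique such, and that it is multiplicative exactly when $A^-$ is. Most of the axioms are formal. Each $t^\gamma A^-$ is a $\kf$-subspace of $\KF$ since multiplication by $t^\gamma$ is a $\kf$-linear bijection, hence so is $A[\Lambda]$, which is CA. As $\{\gamma\in\vg:\gamma>\Lambda\}$ shrinks when $\Lambda$ grows, $\Gamma\le\Lambda$ forces $A[\Gamma]\subseteq A[\Lambda]$, half of CD. Axiom CF is a translation identity: $\delta>\gamma+\Lambda$ is equivalent to $\delta-\gamma>\Lambda$, and multiplication by $t^\gamma$ commutes with intersections, so $A[\gamma+\Lambda]=\bigcap_{\delta>\gamma+\Lambda}t^\delta A^-=\bigcap_{\varepsilon>\Lambda}t^{\gamma+\varepsilon}A^-=t^\gamma\bigcap_{\varepsilon>\Lambda}t^\varepsilon A^-=t^\gamma A[\Lambda]$. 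For CC, if $\delta>\gamma$ then $t^{\gamma-\delta}\in A^-$ by $t$-compatibility, hence $t^\gamma\in t^\delta A^-$; as this holds for all $\delta>\gamma$ and $A[\gamma^+]$ is a $\kf$-subspace, $t^\gamma\kf\subseteq A[\gamma^+]$. For CB1, a nonzero $x\in A[\Lambda]\cap\vring[\Lambda]$ has $\gamma:=v(x)>\Lambda$, so $x\in A[\gamma^-]\subseteq t^\gamma A^-$ and $t^{-\gamma}x\in A^-\cap\vring=(0)$, which is impossible. The same monomials finish CD: if $\Gamma>\Lambda$, choose $\gamma\in\vg$ with $\Lambda<\gamma<\Gamma$; then $t^\gamma\in A[\Gamma]$ by $t$-compatibility, but $t^\gamma\in\vring[\Lambda]$, so $t^\gamma\notin A[\Lambda]$ by CB1 and $A[\Gamma]\not\subseteq A[\Lambda]$. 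Finally CG is obtained by reorganising the nested intersections defining the $A[\lambda^-]$.

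The one substantive point is the equality $A[0^-]=A^-$, equivalently axiom CB2. Since $t^0A^-=A^-$ is one of the sets intersected in $A[0^-]=\bigcap_{\gamma\ge0}t^\gamma A^-$, the inclusion $A[0^-]\subseteq A^-$ is automatic; the reverse inclusion is exactly the assertion that $A^-$ is stable under multiplication by $t^{-\varepsilon}$ for every $\varepsilon\ge0$. When $A^-$ is multiplicative this is immediate from compatibility, since $t^{-\varepsilon}\in A^-$ gives $t^{-\varepsilon}A^-\subseteq A^-A^-\subseteq A^-$; this is the case that matters, as truncation $\kf$-algebra complements are multiplicative. For a general compatible $A^-$ I would try to peel off leading monomials: the leading term $c_0t^{v(a)}$ of any $a\in A^-$ lies in $A^-$ by compatibility and $\kf$-linearity, so $a-c_0t^{v(a)}\in A^-$ has strictly larger value, and iterating along $\supp a$ while multiplying by $t^{-\varepsilon}$ should drive $t^{-\varepsilon}a$, modulo $A^-$, into $\vring\cap A^-=(0)$. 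This transfinite peeling — controlling the partial sums and showing the process exhausts $a$ — is the step I expect to be the main obstacle. Granting the stability, $A[0^-]=A^-$, and then CB2 follows because $A^-$ is already an additive complement of $\vring$: $A[0^-]+\vring=A^-+\vring=\KF$.

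For uniqueness I would take any \CWToC\ $\vcompf'$ with $A'[0^-]=A^-$ and note, by the structural lemma above, that $A'[\gamma^-]=t^\gamma A^-=A[\gamma^-]$ and $A'[\gamma^+]=A'[\gamma^-]+t^\gamma\kf=A[\gamma^+]$ for all $\gamma\in\vg$; then CG determines $A'[\Gamma]$ for every remaining cut $\Gamma$ from the $A'[\lambda^\pm]$, so $\vcompf'=\vcompf$ (the \CWToC\ analogue of Corollary~\ref{corto7}). For the equivalence with multiplicativity, one direction is immediate: the instance $\Lambda=\Gamma=0^-$ of CE gives $A^-A^-=A[0^-]A[0^-]\subseteq A[0^-+0^-]\subseteq A[0^-]=A^-$, since $0^-+0^-\le 0^-\ar 0^-=0^-$. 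Conversely, assuming $A^-A^-\subseteq A^-$ (so also the $t^{-\varepsilon}$-stability of $A^-$ for $\varepsilon\ge0$ as above), I would propagate $A^-A^-\subseteq A^-$ through the intersection formula, matching up exponents by the arithmetic of $\vgh$ (Lemma~\ref{LEM:SUM-DIFF}), to obtain $A[\Lambda]A[\Gamma]\subseteq A[\Lambda+\Gamma]$, i.e.\ CE. This would complete the proof.
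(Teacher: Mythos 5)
The paper offers no proof of this lemma (it is dismissed as ``easy''), so I assess your argument on its own terms. Your verifications of CA, CD, CF, CC and CB1, and your uniqueness argument, are correct, and you were right to read axiom CG as $A[\Gamma]=\bigcap_{\lambda>\Gamma}A[\lambda^-]$: the $\bigcup$ printed in the paper must be a misprint, since a union would contradict the ``only if'' direction of CD as soon as $\Gamma^R$ has more than one element. You also correctly reduce the crux to the stability $t^{-\gamma}A^-\subseteq A^-$ for $\gamma\ge0$, and settle it cleanly when $A^-$ is multiplicative.

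The two points you leave vague are, however, both genuine gaps, and the first is unfixable. Your ``transfinite peeling'' idea cannot be made to work because the assertion $A[0^-]=A^-$ is actually \emph{false} for a general compatible complement, so the lemma needs a stronger hypothesis. Take $\KF=\ff((\Raz))$, $A^-_0=\ff[[\Raz^{<0}]]$, $V$ the $\kf$-span of the monomials $t^q$ with $q<0$, and $a:=\sum_{n\ge1}t^{-1/n}\in A^-_0$. Then $a$ and $t^{-1}a$ are $V$-independent, so one may choose a $\kf$-basis of $A^-_0$ containing $V$, $a$, $t^{-1}a$ and define a $\kf$-linear map $f:A^-_0\to\vring$ vanishing on $V$ and on all other basis vectors, but with $f(a)=f(t^{-1}a)=1$. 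The graph $A^-:=\{x+f(x):x\in A^-_0\}$ is a compatible $\kf$-complement, and $a+1\in A^-$, but $t^{-1}(a+1)=t^{-1}a+t^{-1}\notin A^-$ (its $A^-$-representative is $t^{-1}a+t^{-1}+1$). So $A^-\not\subseteq tA^-$, whereas CF and CD would force $A^-=A[0^-]\subseteq A[1^-]=tA^-$ in any \CWToC with $A[0^-]=A^-$: no such family exists, and the lemma must assume $t^{-\gamma}A^-\subseteq A^-$ for $\gamma\ge0$ (which multiplicativity supplies, and which is all the paper actually uses). Secondly, your proposed propagation of $A^-A^-\subseteq A^-$ does not yet yield CE: the obvious chain
$A[\Lambda]A[\Gamma]\subseteq\bigcap_{\alpha>\Lambda,\,\beta>\Gamma}t^{\alpha+\beta}A^-$
gives only $A[\Lambda]A[\Gamma]\subseteq A[\Lambda\ar\Gamma]$, and $\Lambda\ar\Gamma$ can be strictly larger than $\Lambda+\Gamma$ (for instance $0^+\ar0^+=2^-$ while $0^++0^+=0^+$ in $\vg=\Zed$). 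Bridging the gap between $+$ and $\ar$ is precisely where the multiplicativity transfer needs a real argument, and your sketch does not supply one.
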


Let $A^-$ be
a $\kf$-complement of~$\vring$ compatible with~$t$, and
the family $\vcompf$ defined in the lemma be the \CWToC induced
by~$A^-$.

If $A^-$ is a truncation $\kf$-algebra compatible with~$t$, then there
exist at least one \WToC $\vcompf'$ such that $\vcompf'[0^-] = A^-$.
Then by the above lemma, $\vcompf' = \vcompf$. Therefore, a
necessary and sufficient condition for $A^-$ to be a truncation
$\kf$-algebra compatible with~$t$ is that, for every $x \in \KF$,
$\supp x$ is well-ordered.

We will now define a valued field $\KF$, a $\kf$-complement $B^-$ of~$\vring$
(compatible with chosen residue field and value group sections),
and an element $d \in \KF$, such that $\supp d$ is not well-ordered.

Fix a field~$\ff$.
Let $\FF := \ff((\Zed))$, with the canonical inclusion $\iota: \ff \to \FF$ and
value group section $t: \Zed \to \FF$. Call $\rho: \vring \to \ff$ the residue map.
Let $A^- := \ff[[\Zed^{<0}]]$:
by definition, $A^-$ is a truncation $\ff$-algebra.
Define the maps $h', h'' : \FF \times \Zed \to \FF$ and $g : \FF \times \Zed \to \ff$,  $h'(x, \gamma) = x'_\gamma$, $g(x, \gamma) = a_\gamma(x)$, $h''(x,\gamma) = x''_\gamma$.
Let $c := \sum_{n \geq 0} t^n \in \FF$.
Consider the first-order structure, in the sorts $\FF, \ff, \Zed$,
\[
M := \Pa{
\FF, \ff, \Zed; A^-, +_\FF, \cdot_\FF, +_\Zed, \leq_\Zed, +_\ff, \cdot_\ff, v, t,
\rho, \iota, h', g, h'', c}.
\]
Let
\[
\tilde M = \Pa{\KF, \kf, G; B^-, +_\KF, \cdot_\KF, +_G, \leq_G, +_\kf,
\cdot_\kf, \tilde v, \tilde t, \tilde \rho, \tilde\iota, \tilde{h'},
\tilde g, \tilde{h''}, c}\]
be an $\omega$-saturated elementary extension of~$M$.
It is easy to see that $B^-$ is an $\ff$-complement to~$\vring$,
satisfying $B^- \cdot B^- \subseteq B^-$, and that
$\tilde{h'}(x, \gamma) = x'_\gamma$,
$\tilde g(x, \gamma) = a_\gamma(x)$,
$\tilde{h''}(x, \gamma) = x''_\gamma$.
Moreover, since $g(c, \gamma) = 1$ for every $\gamma \in \Zed^{\geq 0}$
holds in $M$, we must have that $\tilde g(c, \gamma) = 1$ for every
$\gamma \in G^{\geq 0}$. Thus, $\supp c = G^{\geq 0}$. However, since
$\tilde M$ is $\omega$-saturated, $G^{\geq 0}$ is not well-ordered.
Therefore, $B^-$ is not a truncation $\kf$-algebra compatible
with~$\tilde t$.

%
\section{Building a tower of complements} \label{building}
Let $\KF$, $\vg$ and $\kf$ be as in \S\ref{SEC:FAMILY}. To simplify the
notation, we will assume that the section $t: \vg \to \KS$ has trivial
factor set. The aim of this section is to build a tower of complements
for~$\KF$. Given $\bR$ a subring of $\KF$ containing $\vring$ and the
image of~$t$, we define a tower of complements for $\bR$ as a family
$\vcompf$ of subsets of $\bR$ satisfying the
axioms~\ref{AX:CA}--\ref{AX:CE}.

\subsection{The basic case}  \label{basicase}
First of all, consider $\kgring$, the subring of $\KF$ generated by the
monomials $ct^\gamma$. There is one and only one tower of complements
for~$\kgring$: for each~$\Lambda$, $\vcomp[\Lambda]$ is the $\kf$-vector
subspace of $\KF$ generated by the monomials $t^\gamma$ such that
$\gamma < \Lambda$.

\subsection{Extension to quotient fields}  \label{eqf}
Next, consider $\kg$, the field of quotients of $\kgring$.
There is one quick way of constructing a tower of complements for
$\kg$: notice that there exists a unique analytic embedding $\phi$ from
$\kg$ in the field of power series $\kG$ preserving $\kf$ and the
section $t$. Moreover, $\phi$ is truncation-closed, hence the family
\[
\vcompf := \set{\phi^{-1}(k[[\Lambda^L]]): \Lambda\in\vgh}
\]
is a tower of complements for $\kg$.

\par\medskip
However, as we intend to construct our complements intrinsically,
without the use of truncation-closed embeddings in power series fields,
we wish to give a general construction for the extension of towers of
complements from a ring to its quotient field.

\begin{definizione}
Given $\Lambda \in \vgh$, define $\Zed\Lambda := \sup \set{n
\Lambda: n\in\Zed} \in \vgh$.
\end{definizione}
\sn
Note that $\Zed\Lambda + \Zed\Lambda = \Zed\Lambda > 0$.

\begin{proposition}                        \label{quot}
Let $\bR$ a subring of $\,\KF$ containing $\vring$ and the image
of $\,t$. Suppose that $\vcompf$ is a tower of complements for $\bR$.
Define a tower $\vcompbf$ on the quotient field of $\bR$ by
\[
B[\Gamma] = \mathrm{span}_k \Bigl\{
\frac{r}{1 + a}: a, r \in R, v(a) > 0, \mu(r) + \Zed \mu(a) \leq \Gamma
\Bigr\}
\]
for each $\Gamma \in \vgh$. Then $\vcompbf$ is a tower of complements
for the quotient field of~$\bR$.
\end{proposition}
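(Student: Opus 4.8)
The strategy is to verify the axioms \ref{AX:CA}--\ref{AX:CE} for the family $\vcompbf = \set{B[\Gamma]:\Gamma\in\vgh}$ directly. Axiom~\ref{AX:CA} is immediate since each $B[\Gamma]$ is defined as a $\kf$-span. For axioms \ref{AX:CC}, \ref{AX:CD} and \ref{AX:CF}, the key preliminary observations are: every element of $\bR$ has the form $r/(1+a)$ with $a=0$, so $\vcomp[\Lambda]\subseteq B[\Lambda]$ (giving \ref{AX:CC} from the corresponding axiom for $\vcompf$); and $\mu\bigl(t^\gamma r/(1+a)\bigr)$ relates to $\mu(r)$ via Lemma~\ref{LEM:MU} and the identity $\vcomp[\gamma+\Lambda]=t^\gamma\vcomp[\Lambda]$, which yields \ref{AX:CF} after checking $\mu(t^\gamma r)=\gamma+\mu(r)$ (using that the factor set is trivial here). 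Monotonicity \ref{AX:CD} in one direction is clear from the defining condition $\mu(r)+\Zed\mu(a)\leq\Gamma$; the reverse direction (that $B[\Gamma]\subseteq B[\Lambda]$ forces $\Gamma\leq\Lambda$) follows by exhibiting, for $\gamma<\Gamma$, an element of $B[\Gamma]\setminus B[\Lambda]$, e.g. $t^\gamma$ itself when $\Lambda<\Gamma$.

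The multiplicativity axiom \ref{AX:CE} is where the combinatorial lemmas of Section~\ref{ordg} come in. Given generators $r/(1+a)\in B[\Lambda]$ and $r'/(1+a')\in B[\Gamma]$, their product is $rr'/\bigl((1+a)(1+a')\bigr) = rr'/(1+b)$ with $v(b)>0$. Using \ref{AX:CE} for $\vcompf$ one gets $\mu(rr')\leq\mu(r)+\mu(r')$, and one needs $\mu(b)$ controlled by $\max(\mu(a),\mu(a'))$, hence $\Zed\mu(b)\leq\Zed\mu(a)+\Zed\mu(a')$ (here $\Zed\mu(b)+\Zed\mu(b)=\Zed\mu(b)$ and the fact that $\Zed\Delta$ is idempotent and absorbs sums are used). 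Then $\mu(rr')+\Zed\mu(b)\leq \bigl(\mu(r)+\Zed\mu(a)\bigr)+\bigl(\mu(r')+\Zed\mu(a')\bigr)\leq\Lambda+\Gamma$, so the product lies in $B[\Lambda+\Gamma]$. Some care with the monoid laws for $+$ on $\vgh$ (associativity, commutativity, the inequality $\Lambda+\Gamma\leq\Lambda\ar\Gamma$) is needed to rearrange these sums legitimately, and this is the step I expect to need Proposition~\ref{PROP:N-GAMMA} and Corollary~\ref{COR:N-GAMMA}.

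The genuinely hard step is the complementation axiom \ref{AX:CB}: that $B[\Gamma]\oplus\vring[\Gamma]$ is all of the quotient field of $\bR$, both the spanning and the directness. For spanning, given $s=r/(1+a)$ in the quotient field with $v(a)>0$, one writes $r=r_1+r_2$ with $r_1\in\vcomp[\Gamma']$ for an appropriate auxiliary cut $\Gamma'$ and $r_2\in\vring[\Gamma']$, then expands $1/(1+a)$ as a (formally infinite) geometric series $\sum(-a)^n$ and regroups, pushing the high-valuation tail into $\vring[\Gamma]$ and collecting the rest into $B[\Gamma]$; the bookkeeping needs $\Gamma' = \Gamma - \Zed\mu(a)$ or similar, and one must check convergence-type estimates $\mu(r_1(-a)^n)\to$ something $>\Gamma$ using $n\mu(a)\geq n\cdot 0^+$ diverging, invoking the $\Zed$-scaling definition precisely so that the partial sums stabilize. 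For directness one shows $B[\Gamma]\cap\vring[\Gamma]=(0)$: if $r/(1+b)\in\vring[\Gamma]$ then $r\in\vring[\Gamma]\cdot(1+b)$, and one argues that the defining inequality $\mu(r)+\Zed\mu(b)\leq\Gamma$ is then incompatible with $v(r)\leq\Gamma$ unless $r=0$, using $\mu(r)\geq v(r)^+$ and $\Zed\mu(b)\geq 0^+$. I expect the technical heart of the whole proof to be making the geometric-series regrouping rigorous — i.e. showing the rearranged sum actually converges in the valuation topology to an element of $\bR$'s quotient field and that its two pieces land in $B[\Gamma]$ and $\vring[\Gamma]$ respectively — and this is precisely why the index set was completed to $\vgh$ and why $\Zed\Lambda$ was introduced.
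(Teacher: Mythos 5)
Your verification of CA, CC, CD and of the multiplicativity axiom CE follows the paper's route (product of two generators is $rr'/(1+a+a'+aa')$, then $\mu(rr')\leq\mu(r)+\mu(r')$, $\mu(a+a'+aa')\leq\mu(a)+\mu(a')$, and the $\Zed$-scaled estimate), and is fine. The two halves of CB, however, each contain a genuine gap. For directness, your argument only treats a \emph{single} generator $r/(1+b)$: there the inequality $\mu(r)+\Zed\mu(b)\leq\Gamma$ indeed clashes with $v(r)>\Gamma$. But $B[\Gamma]$ is the $\kf$-span of such generators, and the whole difficulty is that a finite sum $\sum_i r_i/(1+a_i)$ of generators can suffer cancellation and acquire value $>\Gamma$ a priori. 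Ruling this out is exactly what the paper's follow-up proposition does: it rewrites any nonzero $b\in B[\Gamma]$ in a normal form $\sum_{i=1}^k r_i/(1+a_i)$ with $v(r_1)<\dots<v(r_k)$, $\Zed\mu(a_k)<\dots<\Zed\mu(a_1)$ and $v(r_{i+1})>\mu(r_i)+\Zed\mu(a_i)$, by repeatedly combining pairs of generators after splitting one numerator at a suitable cut; only then does $v(b)=v(r_1)\leq\Gamma$ follow. Your proposal contains no mechanism for handling these cancellations, and this is the hardest part of the proof.

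For the spanning statement $B[\Gamma]+\vring[\Gamma]=\mathrm{Quot}(\bR)$, your plan --- expand $1/(1+a)$ as $\sum(-a)^n$ and push the tail into $\vring[\Gamma]$, ``using $n\mu(a)\geq n\cdot 0^+$ diverging'' --- does not work: $n\cdot 0^+=0^+$ ($0^+$ is the neutral element of $(\vgh,+)$), and more importantly $n\,v(a)$ need never exceed $\Gamma$ (take $\Gamma$ beyond the convex hull of $\Zed v(a)$), so the tail of the geometric series need not lie in $\vring[\Gamma]$; also the infinite series does not converge in a non-complete field. The paper's definition with the bound $\mu(r)+\Zed\mu(a)\leq\Gamma$ is built precisely to dispose of that regime without any expansion: setting $\Theta:=\mu(a)$, if $\Zed\Theta\leq\Gamma-\Gamma$ then $d/(1-a)^n$ is already a generator of $B[\Gamma]$ once $d$ is split at $\Gamma$. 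The series-type argument is used only when $\Zed\Theta>\Gamma$ (plus a hybrid case $\Gamma-\Gamma<\Zed\Theta<\Gamma$), and even then not in powers of $a$: one first splits $a=a_1+a_2$ at a suitable $\theta_0^-$ with $n_0\theta_0>\Gamma$, sets $c_1:=1-a_1$, which has strictly smaller length than $c=1-a$, expands in powers of $a_2/c_1$, splits the finitely many terms with $i<n_0$ by an induction on the length of the denominator, and observes that the remainder has value $\geq n_0\theta_0>\Gamma$ (resp.\ $\geq\delta+n_0\theta\geq\gamma>\Gamma$ in the hybrid case). The case distinction on $\Zed\Theta$ versus $\Gamma-\Gamma$ and $\Gamma$, the truncation of $a$ itself, and the induction on the length of the denominator are the essential ideas, and none of them appear in your sketch; without them the regrouping you describe cannot be made rigorous.
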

\begin{proof}
Property \ref{AX:CA} for $\vcompbf$ holds by definition. Properties \ref{AX:CC} and \ref{AX:CD} for
$\vcompbf$ are directly inherited from the corresponding properties of~$\vcompf$.
%


\par\smallskip
We show that $\vcompbf$ has property~\ref{AX:CE}.
Take cuts $\Lambda,\Gamma$ and elements $a,a',r,r' \in R$ with
$v(a) > 0$, $v(a') > 0$, $\mu(r) + \Zed \mu(a) \leq \Lambda$ and
$\mu(r') + \Zed \mu(a') \leq \Gamma$. By parts~\ref{EN:MU-SUM}
and~\ref{EN:MU-MULT} of Lemma~\ref{LEM:MU}
we have that $\mu(rr')\leq \mu(r)+\mu(r')$ and $\mu(a+a'+aa')\leq
\max(\mu(a),\mu(a'),\mu(a)+\mu(a'))=\mu(a)+\mu(a')$. It follows that
$\mu(rr')+ \Zed \mu(a+a'+aa') \leq \mu(r)+\Zed \mu(a)
+\mu(r')+\Zed\mu(a')\leq \Lambda+\Gamma$. Hence,
\[\frac{r}{1+a}\cdot \frac{r'}{1+a'}\>=\>\frac{rr'}{1+a+a'+aa'}\>\in\>
B[\Lambda+\Gamma]\;.\]
By additivity, it follows that property \ref{AX:CE} holds for~$\vcompbf$.

\par\smallskip
Let us show that property \ref{AX:CB} holds for~$\vcompbf$.
First, we prove that,
for every cut~$\Gamma$, $K = B[\Gamma] + \vring [\Gamma]$. We will
prove, by induction on the length
of~$c$, that for every $d \in R$, $0 \neq c \in R$, $n \in \Nat$ and
$\Gamma$ a cut of~$G$, $d/c^n$ splits at~$\Gamma$.

W.l.o.g., $v(c) = v(d) = 0$, and $c = 1 - a$, for some $a \in R$ with
$v(a) > 0$. If $a = 0$, then $d/c^n = d \in R$, and we are done.
Otherwise, let $\Theta := \mu(c) = \mu(a)$. Note that $\Theta > 0$.

Split $d = d_1 + d_2$ at $\Gamma$. Note that $v(d_2 / c^n) = v(d_2) >
\Gamma$. Thus, it suffices to split $d_1/c^n$ at~$\Gamma$, and
therefore, w.l.o.g., we can assume that $d = d_1$, and thus $\mu(d) \leq
\Gamma$. If $d = 0$ we are done, otherwise $\Gamma > 0$, and therefore
$\Gamma \geq \hat \Gamma$.

There are 4 cases: either $\Zed \Theta \leq \hat\Gamma$, or $\hat \Gamma
< \Zed \Theta < \Gamma$, or $\Zed \Theta = \Gamma$, or $\Zed \Theta >
\Gamma$.

If $\Zed \Theta \leq \hat\Gamma$, then
\[
\mu(d) + \Zed \mu(c^n) \leq \mu(d) + \Zed \mu(c) \leq \Gamma + \Zed
\Theta = \Gamma,
\]
and therefore $d/c^n \in A[\Gamma]$.

If $\Zed \Theta = \Gamma$, then $\Gamma = \hat \Gamma$, and we are in the
previous case.

If $\Zed \Theta > \Gamma$, we have $\Gamma < n_0 \theta_0$ for some $n_0
\in \Nat$ and $\theta_0 < \Theta$. Split $a = a_1 + a_2$ at $\theta^-$,
and define $c_1 := 1 - a_1$. Write
\[
d/c^n = d/(c_1 + a_2)^n = \frac{d/c_1^n}{(1 + a_2/c_1)^n} =
\sum_i m_{i, n} \frac{d a_2^i}{c_1^{i + n}},
\]
for some natural numbers $m_{i, n}$. Note that $\mathrm{lt}(c_1) <
\mathrm{lt}(c)$, and therefore, by induction on the length of~$c$, each
summand $x_i := m_{i,n} d a_2^i /c_1^{i + n}$ splits
at~$\Gamma$.
Moreover, for each $i \geq n_0$, $v(x_i) = v(m_{i, n}) +  i v(a_2) \geq n_0 \theta_0 >
\Gamma$, and therefore $x$ splits at~$\Gamma$.

If $\hat \Gamma < \Zed \Theta < \Gamma$, let $\Psi := \Gamma - \Zed
\Theta > 0$, and split $d = d_1 + d_2$ at~$\Psi$. Note that $\mu(d_1) +
\Zed\mu(c^n) \leq \Psi + \Zed\Theta \leq \Gamma$, and therefore $d_1/c^n
\in B[\Gamma]$. It remains to split $d_2 / c^n$. Let $\delta := v(d_2) >
\Psi$. By definition of $\Psi$, there exists $n_0 \in \Nat$, $\theta <
\Theta$, $\gamma > \Gamma$, such that $\delta \geq \gamma - n_0 \theta$.
Split $a = a_1 + a_2$ at $\theta^-$, and define $c_1 := 1 - a_1$.
As before,
\[
d_2/c^n = d_2/(c_1 - a_2)^n = \frac{d/c_1^n}{(1 - a_2/c_1)^n} =
\sum_i m_{i, n} \frac{d a_2^i}{c_1^{i + n}},
\]
and, by induction on the length of~$c$, each summand $x_i := m_{i,n} d_2
a_2^i /c_1^{i + n}$ splits at~$\Gamma$. Moreover, for every $i \geq
n_0$,
\[
v(x_i) = v(m_{i,n}) +  v(d_2) + i v(a_2) \geq \delta + n_0 \theta
\geq \gamma > \Gamma,
\]
and we are done.

\par\smallskip
In order to finish our proof, it now suffices to show that
$B[\Gamma]\cap \vring [\Gamma]=\{0\}$ for every cut~$\Gamma$. In the
next proposition, we will deduce a normal form for every non-zero
element $b\in B[\Gamma]$ that will prove that $v(b)\leq\Gamma$ so that
$b$ cannot lie in $\vring [\Gamma]$.
\end{proof}

\begin{proposition}
With the preceding definition of the tower $\vcompbf$, take $b\in
B[\Gamma]$ for some $\Gamma \in \vgh$. Then $b$ can be written in the
form
\[\sum_{i=1}^{k}\frac{r_i}{1+a_i}\]
with $a_i, r_i \in R$ such that $v(a_i) > 0$ and $\mu(r_i) + \Zed
\mu(a_i) \leq \Gamma$ for all $i$, and such that
\[v(r_1)<\ldots < v(r_k)
\;\mbox{ and }\;\Zed \mu(a_k)<\ldots < \Zed \mu(a_1)\;.\]
Moreover, $v(r_{i+1})>\mu(r_i)+\Zed\mu(a_i)$ for $1\leq i<k$.
\end{proposition}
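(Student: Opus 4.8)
The plan is to take the defining expression of $b$ as a $\kf$-linear combination of generators of $B[\Gamma]$ and to normalise it in three stages. First, absorb scalars: since multiplying a numerator by a non-zero element of $\kf$ affects neither its support nor the inequality $\mu(\cdot)+\Zed\mu(\cdot)\le\Gamma$, every $b\in B[\Gamma]$ can be written as a finite sum $b=\sum_{i=1}^{n}\frac{r_i}{1+a_i}$ with $a_i,r_i\in R$, $v(a_i)>0$, $\mu(r_i)+\Zed\mu(a_i)\le\Gamma$, and, after discarding vanishing summands, $r_i\neq 0$ for all $i$; if $n=0$ then $b=0$ and we take $k=0$.

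Second, make the cuts $\Zed\mu(a_i)$ pairwise distinct. If $\Zed\mu(a_i)=\Zed\mu(a_j)=:\Delta$ with $i\neq j$, replace $\frac{r_i}{1+a_i}+\frac{r_j}{1+a_j}$ by the single term $\frac{r_i+r_j+r_ia_j+r_ja_i}{1+(a_i+a_j+a_ia_j)}$. The key point is that $\Delta$, being of the form $\Zed(\cdot)$, is idempotent, $\Delta+\Delta=\Delta$ (from $\Zed\Theta+\Zed\Theta=\Zed\Theta$), while $\mu(x)\le\Zed\mu(x)$ always; combining these with the subadditivity of $\mu$ from Lemma~\ref{LEM:MU} and the difference arithmetic of Lemma~\ref{LEM:SUM-DIFF} and Proposition~\ref{PROP:N-GAMMA}, one checks that the new denominator $1+a''$ has $v(a'')>0$ and $\Zed\mu(a'')\le\Delta$, and that the new numerator $r''$ has $\mu(r'')+\Zed\mu(a'')\le\mu(r'')+\Delta\le\Gamma$. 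Thus the merged term is again admissible, while $n$ drops. Iterating and then reindexing, we may assume $\Zed\mu(a_n)<\dots<\Zed\mu(a_1)$, again discarding any term whose numerator became $0$.

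Third --- the heart of the argument --- impose the separation $v(r_{i+1})>\mu(r_i)+\Zed\mu(a_i)$ for every $i$; since $\mu(r_i)+\Zed\mu(a_i)\ge\mu(r_i)>v(r_i)$, this automatically yields the strict chain $v(r_1)<\dots<v(r_k)$. I would argue by induction on $n$ (with a secondary well-founded measure for bookkeeping), peeling off the term of index $1$, which carries the largest cut $\Delta=\Zed\mu(a_1)$. Fix a cut $\Psi\le\Gamma$ with $\mu(r_1)\le\Psi$ and $\Psi+\Delta\le\Gamma$ (e.g.\ $\Psi=\Gamma-\Delta$; note $\mu(r_1)\le\Gamma-\Delta$ by Lemma~\ref{LEM:SUM-DIFF}), truncate every numerator at $\Psi$, writing $r_i=p_i+q_i$ with $\mu(p_i)\le\Psi$ and $v(q_i)>\Psi$ (so $p_1=r_1$ and $q_1=0$), and split $b=\sum_i\frac{p_i}{1+a_i}+\sum_{i\ge2}\frac{q_i}{1+a_i}$. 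The second sum has $n-1$ terms and numerators of value $>\Psi$, so after re-running stages one and two the inductive hypothesis puts it in normal form with every $\Zed\mu<\Delta$ and first numerator of value $>\Psi$; the first sum is collapsed towards the single denominator $1+a_1$ via $\frac{p_i}{1+a_i}=\frac{p_i}{1+a_1}+\frac{p_i(a_1-a_i)}{(1+a_1)(1+a_i)}$, the resulting error terms being folded back through stages one and two. At every rewriting one checks that the term stays admissible and that $\Zed\mu(a_1)$ never increases, and one picks the secondary measure (e.g.\ the number of terms together with the order types of the supports of the numerators) so that it strictly decreases.

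I expect the main obstacle to be exactly this third stage: choosing $\Psi$ so that the ``reach'' $\mu(r_1)+\Zed\mu(a_1)$ of the first term falls strictly below the value of the first numerator of the normalised high part --- which is what the separation condition demands --- while keeping every constraint $\mu(r_i)+\Zed\mu(a_i)\le\Gamma$ intact. The subtlety is that the idempotence $\Zed\Theta+\Zed\Theta=\Zed\Theta$ makes $\Gamma-n\Zed\Theta$ independent of $n\ge1$, so one cannot gain room by truncating deeper and must instead exploit the strict inequalities $\Zed\mu(a_i)<\Zed\mu(a_1)$ to create the gap. Once the normal form is in hand, the payoff for the preceding proof is immediate: the index-$1$ term has value $v(r_1)$ while all later terms have strictly larger value, so $v(b)=v(r_1)\le\mu(r_1)\le\mu(r_1)+\Zed\mu(a_1)\le\Gamma$, whence $b\notin\vring[\Gamma]$.
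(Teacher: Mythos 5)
Your stages one and two are sound, and they reproduce the easy half of the paper's pairwise reduction: when $\Zed\mu(a_i)=\Zed\mu(a_j)$, idempotence of the cuts $\Zed\Theta$ makes the merged fraction admissible, exactly as in the first case of the paper's merging step. But the proposition stands or falls with your stage three, and there you give a plan rather than an argument --- as you yourself flag. The concrete failure point is this: after truncating the numerators at $\Psi=\Gamma-\Zed\mu(a_1)$, all you know about the high part is that its numerators (and hence, after normalisation, its first numerator) have value $>\Psi$, whereas the separation you must produce is value $>\mu(\text{low numerator})+\Zed\mu(\text{low denominator})$, a cut which in general lies strictly above $\Psi$; and, as you observe, idempotence ($\Gamma-n\Zed\Theta$ independent of $n\geq 1$) prevents you from truncating deeper to create the needed room. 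Nothing in the sketch closes this gap. Moreover, the auxiliary devices you invoke (collapsing the low part onto the denominator $1+a_1$, ``folding back'' the error terms through stages one and two, a ``secondary measure'' mixing the number of terms with order types of supports) are neither specified nor shown to terminate: each collapse turns $n$ admissible terms into $2n-1$ before any re-merging, so termination genuinely requires an argument.

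The mechanism your proposal is missing is the paper's minimality device. Start from a representation of $b$ with a \emph{minimal} number $k$ of admissible summands. Minimality forces, for every pair of summands, the failure of the mergeability condition $\max(\mu(r),\mu(r'))+\Zed\mu(a)+\Zed\mu(a')\leq\Gamma$; by idempotence this failure already implies that the cuts $\Zed\mu(a_i)$ are pairwise distinct, and, for the summand whose cut $\Zed\mu(a)$ is the smaller one, it yields the key inequality $\mu(r)>\mu(r')+\Zed\mu(a')=:\Theta$. Splitting $r=s_1+s_2$ at $\Theta$ and absorbing $s_1/(1+a)$ into the other fraction leaves a remainder $s_2/(1+a)$ with $v(s_2)>\Theta\geq\mu(r'')+\Zed\mu(a'')$ (the last inequality again by idempotence), which is precisely the separation $v(r_{i+1})>\mu(r_i)+\Zed\mu(a_i)$ that your truncation-at-$\Psi$ scheme cannot reach. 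The paper also verifies $\Zed\mu(a)<\Zed\mu(a'')\leq\Zed\mu(a')$, so the ordering of the cuts survives the operation, and the non-vanishing of $r''$ and $s_2$ --- hence ultimately the strict chain $v(r_1)<\dots<v(r_k)$ --- again comes from minimality of $k$. A descending induction, applying this operation first to the two summands with the smallest cuts and moving upwards, then finishes the proof with no growth in the number of terms. Without the minimality of $k$ (or some substitute that produces the inequality $\mu(r)>\mu(r')+\Zed\mu(a')$), your stage three does not go through, so as it stands the proposal proves only the existence of a representation with distinct cuts, not the normal form asserted in the proposition.
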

\begin{proof}
As a first step, we prove the following. Suppose that
$a,a',r,r' \in R$ with $v(a) > 0$, $v(a') > 0$, $\mu(r) + \Zed \mu(a)
\leq \Gamma$ and $\mu(r') + \Zed \mu(a') \leq \Gamma$. If
\begin{equation}                            \label{mumu}
\max(\mu(r),\mu(r'))+\Zed\mu(a)+\Zed\mu(a')\leq\Gamma\;,
\end{equation}
then by parts~\ref{EN:MU-SUM} and~\ref{EN:MU-MULT} of Lemma~\ref{LEM:MU}
we have that
\begin{eqnarray*}
\lefteqn{\mu(r(1+a')+r'(1+a))+\Zed\mu(a+a'+aa')}\\
& \leq &
\max(\mu(r)+\mu(1+a'),\mu(r') +\mu(1+a))+\Zed (\mu(a)+\mu(a'))\\
& = & \max(\mu(r),\mu(r'))+ \Zed\mu(a)+\Zed\mu(a')\leq\Gamma
\end{eqnarray*}
since $\mu(1+a)=\mu(a)$ and $\mu(1+a')=\mu(a')$. This implies that
\[\frac{r}{1+a}\>+\>\frac{r'}{1+a'}\>=\>\frac{r(1+a')+r'(1+a)}{1+a+a'+aa'}
\>=\>\frac{r''}{1+a''}\]
with $a'', r'' \in R$, $v(a'') > 0$ and $\mu(r'') + \Zed \mu(a'') \leq
\Gamma$.

If (\ref{mumu}) does not hold, we must have that $\Zed\mu(a)\ne
\Zed\mu(a')$, and if $\Zed\mu(a)$ is the smaller of the two, we also
must have that $\mu(r)+\Zed\mu(a')>\Gamma$, which implies that
$\mu(r)>\mu(r')+\Zed\mu(a')=:\Theta$. In this case, split $r=s_1+s_2$ at
$\Theta$ so that $s_1\in A[\Theta]$ and $s_2\in \vring [\Theta]$. It
follows that $\mu(s_1)\leq\mu(r')+\Zed\mu(a')$ so that (\ref{mumu})
holds with $r$ replaced by $s_1$. So we can write
\[\frac{r}{1+a}\>+\>\frac{r'}{1+a'}\>=\>
\frac{s_1(1+a')+r'(1+a)}{1+a+a'+aa'}\>+\>\frac{s_2}{1+a}
\>=\>\frac{r''}{1+a''}\>+\>\frac{s_2}{1+a}\]
with $a'',a, r'',s_2 \in R$, $v(a'') > 0$, $v(a) > 0$, $\mu(r'') + \Zed
\mu(a'') \leq \Gamma$ and $\mu(s_2) + \Zed \mu(a) \leq \Gamma$. We have
that
\[\mu(r'')=\mu(s_1(1+a')+r'(1+a))\leq\max(\mu(s_1)+\mu(1+a'),
\mu(r')+\mu(1+a))\leq\Theta,\]
hence if $r''\ne 0$, then $v(s_2)> v(r'')$.

Let us also show that
\[\Zed\mu(a)<\Zed\mu(a'')\leq\Zed\mu(a')\;.\]
Split $a'=a'_1+a'_2$ at $\Zed\mu(a)$ so that $a'_1\in A[\Zed\mu(a)]$ and
$a'_2\in\vring [\Zed\mu(a)]$; since $\Zed\mu(a)<\Zed\mu(a')$, we must
have that $a'_2\ne 0$. Then
\[1+a''=(1+a)(1+a')=(1+a)(1+a'_1+a'_2)=(1+a)(1+a'_1)
+a'_2+aa'_2.\]
Since $(1+a)(1+a'_1)\in A[\Zed\mu(a)]$ and $0\ne
a'_2+aa'_2\in \vring [\Zed\mu(a)]$, we find that $\Zed\mu(a)<\mu(a'')$
and hence $\Zed\mu(a)<\Zed\mu(a'')$. On the other hand, $\Zed\mu(a'')=
\Zed \mu(a+a'+aa') \leq\Zed\max(\mu(a),\mu(a'),\mu(a)+\mu(a'))=
\Zed(\mu(a)+\mu(a'))=\Zed\mu(a')$.

\par\smallskip
Every non-zero element $b\in B[\Gamma]$ can be written as
\[b\>=\>\sum_{i=1}^{k}\frac{\tilde{r}_i}{1+\tilde{a}_i}\]
with $\tilde{a}_i, \tilde{r}_i \in R$ such that $v(\tilde{a}_i) > 0$ and
$\mu(\tilde{r}_i) + \Zed \mu(\tilde{a}_i) \leq \Gamma$ for all $i$.
Suppose that this is a representation of $b$ with minimal $k$. Then it
follows that for any choice of $i,j$ such that $1\leq i<j\leq k$,
(\ref{mumu}) cannot hold for $r_i,r_j,a_i,a_j$ in the place of
$r,r',a,a'$, respectively. So we know that all $\Zed\mu(\tilde{a_i})$
are distinct, and we may w.l.o.g.\ assume that $\Zed \mu(\tilde{a}_k)
<\ldots < \Zed \mu(\tilde{a}_1)$.

Suppose that $k\geq 2$. Having that $\Zed \mu(\tilde{a}_k)<\Zed
\mu(\tilde{a}_{k-1})$, we apply the above procedure to
$\tilde{r}_k,\tilde{r}_{k-1}$, $\tilde{a}_k, \tilde{a}_{k-1}$ in the
place of $r,r',a,a'$, respectively. The elements $r''$ and $s_2$
obtained here cannot be zero since otherwise, $k$ wouldn't have been
minimal. We set $r_k=s_2$ and replace $\tilde{r}_{k-1}$ by $r''$ and
$\tilde{a}_{k-1}$ by $a''$. By construction, $\Zed \mu(a_k)<\Zed
\mu(\tilde{a}_{k-1})$ and $v(r_k)>\mu(\tilde{r}_{k-1})+ \Zed
\mu(\tilde{a}_{k-1})$. And we still have that $\Zed \mu(\tilde{a}_{k-1})
<\Zed \mu(\tilde{a}_{k-2})$. So now we repeat the above procedure with
$\tilde{r}_{k-1},\tilde{r}_{k-2}$, $\tilde{a}_{k-1}, \tilde{a}_{k-2}$ in
the place of $r,r',a,a'$, respectively. We note that the non-zero
element $s_2$ we obtain this time, which will become our $r_{k-1}$,
satisfies $\mu(s_2)=\mu(\tilde{r}_{k-1})\leq \mu(\tilde{r}_{k-1})+ \Zed
\mu(\tilde{a}_{k-1}) <v(r_k)$. This yields $v(r_{k-1})<v(r_k)$, and from
now on we can proceed by descending induction. The element $r''$ found
in the last step will then be our $r_1\,$.
\end{proof}

\subsection{Extension to immediate field extensions} \label{5.3}
Let $\vcompf$ be a (weak) tower of complements for $\KF$, and $\FF$ an
\emph{immediate} extension of $\KF$. The aim of this subsection is to
extend $\vcompf$ to a (weak) tower of complements for $\FF$, under some
condition on the extension $\extension{\FF}{\KF}$. We need to study
further properties of the map $\mu$ (cf.\ Definition \ref{MU}).
\begin{lemma}\label{LEM:MU-INVERSE}
Let $0 \neq a \in \KF$ such that $\val a = 0$, $b := \frac{1}{a}$,
$\Lambda := \Zed\mu(a)$.
Then, $\mu(b) \leq \Lambda$.\footnote{That is, $b \in \vcompL$.}
\end{lemma}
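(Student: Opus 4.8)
The plan is to establish the equivalent statement $b \in \vcomp[\Lambda]$, where $\Lambda := \Zed\mu(a)$, by purely algebraic means: decompose $b$ according to $\KF = \vcomp[\Lambda] \oplus \vring[\Lambda]$ (Axiom~\ref{AX:CB}) and use the identity $ab = 1$ to force the $\vring[\Lambda]$-component to vanish. No support computations are needed.

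First I would record the preliminaries. Since $\val a = 0$, the element $a$ lies in $\vring$ and is nonzero, and $b = 1/a$ also lies in $\vring$. The paper's observation that $\Zed\mu(a) + \Zed\mu(a) = \Zed\mu(a) > 0$ gives $\Lambda + \Lambda = \Lambda$ and $0^+ \leq \Lambda$; taking $n = 1$ in the definition of $\Zed\mu(a)$ gives $\mu(a) \leq \Lambda$, so $a \in \vcomp[\Lambda]$ by part~\ref{EN:MU-2} of Lemma~\ref{LEM:MU}. Moreover $1 = t^0 \in \vcomp[0^+] \subseteq \vcomp[\Lambda]$ by Axioms~\ref{AX:CC} and~\ref{AX:CD}.

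Now write $b = b_1 + b_2$ with $b_1 \in \vcomp[\Lambda]$ and $b_2 \in \vring[\Lambda]$, so that $ab_2 = 1 - ab_1$. On the one hand, $\mu(ab_1) \leq \mu(a) + \mu(b_1) \leq \Lambda + \Lambda = \Lambda$ by part~\ref{EN:MU-MULT} of Lemma~\ref{LEM:MU}, hence $ab_1 \in \vcomp[\Lambda]$, and since $\vcomp[\Lambda]$ is a $\kf$-subspace containing $1$ we get $ab_2 = 1 - ab_1 \in \vcomp[\Lambda]$. On the other hand, if $b_2 \neq 0$ then $\val(ab_2) = \val a + \val b_2 = \val b_2 > \Lambda$, so $ab_2 \in \vring[\Lambda]$; then $ab_2 \in \vcomp[\Lambda] \cap \vring[\Lambda] = (0)$ by the directness of the sum in Axiom~\ref{AX:CB}, contradicting $ab_2 \neq 0$. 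Therefore $b_2 = 0$, that is, $b = b_1 \in \vcomp[\Lambda]$, which is exactly the assertion $\mu(b) \leq \Lambda$.

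The only genuine difficulty is not to be led astray by the ``obvious'' attempt, namely writing $a = \res{a}(1+u)$ with $\val u > 0$ and expanding $b = \res{a}^{-1}\sum_{i \geq 0}(-u)^i$: the partial sums need not agree with $b$ on any initial segment of $G$ cut out by a valuation bound (their ``convergence'' is precisely the well-orderedness phenomenon, invisible to the valuation), so one cannot read off $\mu(b) \leq \Lambda$ from this expansion. The splitting argument above avoids it completely; its cost is that multiplicativity is used essentially — through part~\ref{EN:MU-MULT} of Lemma~\ref{LEM:MU}, equivalently through $\vcomp[\Lambda]\cdot\vcomp[\Lambda] \subseteq \vcomp[\Lambda]$ (Axiom~\ref{AX:CE} together with $\Lambda + \Lambda = \Lambda$) — so this proof requires $\vcompf$ to be a genuine tower of complements rather than merely a weak one.
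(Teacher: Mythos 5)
Your proof is correct and follows essentially the same route as the paper: decompose $b = b_1 + b_2$ at $\Lambda$, use multiplicativity and $\Lambda + \Lambda = \Lambda$ to get $ab_1 \in \vcomp[\Lambda]$, and then invoke the directness of $\vcomp[\Lambda] \oplus \vring[\Lambda]$ (the paper phrases this as uniqueness of the decomposition of $1$ at $\Lambda$) to force $ab_2 = 0$. Your closing remark that the argument genuinely needs $\vcompf$ to be multiplicative, via part~\ref{EN:MU-MULT} of Lemma~\ref{LEM:MU}, matches the paper's implicit hypothesis.
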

\begin{proof}
If $\Lambda = +\infty$, there is nothing to prove.
Otherwise, decompose $b = b_1 + b_2$, with $b_1 \in \vcompL$,
$\val(b_2) > \Lambda$.
Hence, $\mu(a b_1) \leq \Lambda + \Lambda = \Lambda$,
while $\val(a b_2) = \val(b_2) > \Lambda$.
Moreover, $1 = a b = a b_1 + a b_2$, hence, by the uniqueness of the
decomposition of $1$ at $\Lambda$, we get $a b_2 =0$, that is,
$b = b_1 \in \vcompL$.
\end{proof}
\begin{lemma}
Let $a \in \KF$ be of the form $a = a' + c t^\lambda$,
where $c \in \ks$, and $0 = \val a' < \mu a' < \lambda$.
Let $\Lambda := \Zed\lambda^+$, $b := \frac{1}{a}$.
Then, $\mu(b) = \Lambda$.
\end{lemma}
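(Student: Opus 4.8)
The plan is to prove the two inequalities $\mu(b)\le\Lambda$ and $\mu(b)\ge\Lambda$ with $\Lambda=\Zed\lambda^+$; recall that $\Zed\lambda^+=\sup\{(n\lambda)^+:n\in\Nat\}$. For the upper bound: since $0=\val(a')<\lambda$ we have $\val(a)=\min\{0,\lambda\}=0$, so $b=1/a$ is defined with $\val(b)=0$; and since $\mu(a')<\lambda$ forces $\supp(a')\subseteq(-\infty,\lambda)$ while $c\neq 0$, we get $\supp(a)=\supp(a')\sqcup\{\lambda\}$, whence $\mu(a)=\max(\mu(a'),\lambda^+)=\lambda^+$ and thus $\Lambda=\Zed\mu(a)$. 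Lemma~\ref{LEM:MU-INVERSE} then gives $\mu(b)\le\Zed\mu(a)=\Lambda$.

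For the lower bound I would first reduce it to a statement about $\supp(b)$. Since every cut $<\Lambda$ is $\le(n\lambda)^-$ for some $n$ (because $\Lambda=\sup_n(n\lambda)^+$), it suffices, by Lemma~\ref{LEM:MU}, to show $b\notin\vcomp[(n\lambda)^-]$ for every $n\in\Nat$; and by Lemma~\ref{LEM:COMPL-BASIC} we have $\vcomp[(n\lambda)^-]=t^{n\lambda}\vcomp[0^-]$, so, unwinding once more via Lemma~\ref{LEM:MU}, the claim is equivalent to: $\supp(b)$ contains an element $\ge n\lambda$, for every $n$. To prove this I would pass to the power series field through the truncation-closed field embedding $\fsum$ of Theorem~\ref{THM:EMBEDDING} (the factor set being trivial here, so $\kGf=\kG$); then $\fsum(b)=\fsum(a)^{-1}$ and $\fsum(a)=\fsum(a')+ct^\lambda$ with $\supp(\fsum a')=\supp(a')\subseteq[0,\lambda)$ and $(\fsum a')(0)=\res{a'}\neq 0$. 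Writing $(\fsum b)(\gamma)$ for the coefficient of $t^\gamma$ in $\fsum(b)$, comparison of coefficients in the identity $\fsum(a)\,\fsum(b)=t^0$ yields, for every $\gamma\in\vg$ with $\gamma>-\lambda$,
\[
c\,(\fsum b)(\gamma)\;=\;-\sum_{\beta\in\supp(a')}(\fsum a')(\beta)\,(\fsum b)(\gamma+\lambda-\beta),
\]
where every index $\gamma+\lambda-\beta$ is $>\gamma$ because $\beta<\lambda$.

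From this relation: if $\supp(b)$ had a maximum $\gamma_0$, the right-hand side at $\gamma=\gamma_0$ would vanish, contradicting $\gamma_0\in\supp(b)$; so $\supp(b)$ has no maximum. Assuming for contradiction that $\supp(b)\subseteq\{\gamma:\gamma<n\lambda\}$ for some $n$, the relation lets one ``step up'' from each $\gamma\in\supp(b)$ to some $\gamma+\lambda-\beta\in\supp(b)$ with $\beta\in\supp(a')$, producing a strictly increasing chain inside $\supp(b)$; a contradiction should then come from combining the boundedness hypothesis with the facts that $0\in\supp(a')$ (so the leading coefficient $(\fsum a')(0)$ is invertible) and that $\supp(a')$ is a fixed well-ordered subset of $[0,\lambda)$. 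This gives $\mu(b)\ge(n\lambda)^+$ for all $n$, hence $\mu(b)\ge\Lambda$, and together with the upper bound $\mu(b)=\Lambda$.

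The hard part is precisely this last contradiction in the case $\mu(a')=\lambda^-$, i.e.\ when $\supp(a')$ accumulates at $\lambda$ from below: well-orderedness of $\supp(b)$ does not provide a maximum, and the step-up increments $\lambda-\beta$ may be arbitrarily small, so one cannot simply read off that $\supp(b)$ is unbounded past $n\lambda$. One way to organize the argument is to follow, from a fixed $\gamma\in\supp(b)$, the chain obtained by always choosing the least admissible $\beta\in\supp(a')$, and to show that a chain staying below $n\lambda$ would force the coefficient relation above to fail at the supremum of the chain.
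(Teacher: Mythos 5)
Your upper bound $\mu(b)\le\Lambda$ is exactly the paper's first step: note $\mu(a)=\lambda^+$ and apply Lemma~\ref{LEM:MU-INVERSE}. That part is fine.

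For the lower bound, however, you take a genuinely different route from the paper and the route has a gap that you yourself flag, which I agree is a real gap. You pass to the power series model via Theorem~\ref{THM:EMBEDDING} and try to read off cofinality of $\supp(b)$ in $\Lambda^L$ from the coefficient recursion $c\,(\fsum b)(\gamma)=-\sum_{\beta}(\fsum a')(\beta)(\fsum b)(\gamma+\lambda-\beta)$. The ``no maximum'' step is correct, and the ``step-up'' observation (each $\gamma\in\supp(b)$ admits $\gamma+\lambda-\beta\in\supp(b)$ for some $\beta\in\supp(a')$) is also correct. But a well-ordered set can contain strictly increasing $\omega$-chains that are bounded; since $\supp(a')$ may be cofinal in $(-\infty,\lambda)$, the increments $\lambda-\beta$ can shrink without bound, and nothing in the recursion, as you have used it, forces the chain to exhaust $(-\infty,n\lambda)$. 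Choosing the ``least admissible $\beta$'' does not obviously help: the least element of $\supp(a')$ is $0$, but there is no guarantee that $\gamma+\lambda\in\supp(b)$, so you may be forced into arbitrarily small increments, and the proposed ``failure at the supremum of the chain'' is not an argument yet. So the lower bound remains unproved.

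The paper's lower bound argument avoids the power series picture entirely and is purely a tower-of-complements computation. Suppose $\Gamma:=\mu(b)<\Lambda$ and pick $n$ with $(n-1)\lambda>\Gamma$. One manufactures an element $b'\in\KF$ that is a polynomial expression in $a$ and $a'$ approximating $b=1/a$, chosen so that two things hold simultaneously: $\mu(b')\le(n-1)\lambda^+$ (because it is built out of $a'$, with $\mu(a')\le\lambda^-$, and of $a$, with $\mu(a)=\lambda^+$, to total weight $n-1$) and $v(b-b')=n\lambda$ (because $a(b-b')$ is an explicit $n$-th power of something of value exactly $\lambda$). Then $b-b'$ lies in $\vcomp[(n-1)\lambda^+]$ (since both $b$ and $b'$ do, as $\Gamma<(n-1)\lambda^+$) and also in $\vring[n\lambda^-]$, hence in $\vcomp[n\lambda^-]\cap\vring[n\lambda^-]=(0)$ by Axiom~\ref{AX:CB} and part~\ref{EN:COMPL-BASIC-DEC} of Lemma~\ref{LEM:COMPL-BASIC}; this forces $b=b'$ and hence $ct^\lambda=0$, a contradiction. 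The key idea you are missing is precisely this global algebraic identity: it converts the problem from a term-by-term chase through $\supp(b)$ into a single clean clash between a $\vcomp$-membership (bounding $\mu$ from above) and a valuation bound (forcing membership in $\vring$) at the \emph{same} cut $n\lambda^-$. If you want to repair your approach, you should look for a finite closed-form approximant of $1/\fsum(a)$ by a truncated geometric series in $t^\lambda/\fsum(a')$, rather than trying to propagate a one-step recursion transfinitely.
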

\begin{proof}
Note that $\mu a = \lambda^+$.
By the previous lemma, $\mu b \leq \Lambda$.
Suppose, for a contradiction, that $\Gamma :=\mu b < \Lambda$.
Choose $n \in \Nats$ such that $(n - 1) \lambda > \Gamma$.
Define
\[
b' := \frac{1- (1 - \frac{a}{a'})^n}{a} = \sum_{i = 0}^{n-1}
\tbinom{n}{i} a'^i (-a)^{n - 1 - i}.
\]
Since $\mu a' < \lambda$ and $\mu a = \lambda^+$,  $\mu b' \leq (n - 1)
\lambda^+$.
Thus,
\[
\mu (b - b') \leq \max \set{\mu b, \mu b' } \leq \max \set{\Gamma,
(n - 1) \lambda^+} = (n - 1)\lambda^+.
\]
Therefore, $\mu\bigl(a (b - b') \bigr) \leq \mu a + \mu(b-b') \leq
n\lambda^+$.

Moreover,
\[
a (b - b') = \bigl( \frac{a' - a}{a'} \bigr)^n =
\bigr( \frac{-c t^\lambda}{a'} \bigr)^n.
\]
Hence, $\val(b - b') = \val\Pa{a (b - b')} = n \lambda$.
Therefore, $b' = b + (b' - b)$, with $b \in \vcompG$ and $b' - b \in
\vring[n \lp] \subseteq \vringG$. Thus, since $b' \in \vcomp[(n-1)\lp]$,
by~\ref{EN:COMPL-BASIC-DEC}, $b' - b \in \vcomp[(n-1) \lp] \subseteq
\vcomp[n \lm]$. Finally, $b - b'\in \vring[n \lm] \cap \vcomp[n \lm] =
(0)$, so $b = b'$, that is, $a = a'$, a contradiction.
\end{proof}

\begin{definizione}
For $a \in \FF$, we define
\[
\La := \set{\val(a-c): c\in\KF}^+ \in \vgh.
\]

Note that if $a \in \Kf$ then $\La = + \infty$.
\end{definizione}
\begin{lemma}\label{LEM:MU-LAMBDA}
Let $\bR$ be a sub-ring of $\,\FF$ such that $\Kf
\subseteq \bR \subseteq \FF$, and $a \in \bR \setminus \Kf$.
Let $\vcompbf$ be a \wtoc for $\bR$ extending $\vcompf$.
Then, $\La \leq \mu(a)$.
\end{lemma}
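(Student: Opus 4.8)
The plan is to reduce the statement to the claim that $\val(a-c)<\mu(a)$ for every $c\in\KF$. Since $a\notin\KF$ we have $a-c\ne 0$ for all such~$c$, so $S:=\set{\val(a-c):c\in\KF}$ is a nonempty subset of~$G$ and, by definition, $\La=S^+$; as $S^+$ is the least cut $\Lambda$ with $S\subseteq\Lambda^L$, the inequality $\La\le\mu(a)$ follows the moment we know $S\subseteq\mu(a)^L$, i.e.\ that $\val(a-c)<\mu(a)$ for every~$c$. (If $\mu(a)=+\infty$ this is automatic; $\mu(a)=-\infty$ cannot occur since $a\ne 0$; so there is content only when $\mu(a)$ is a proper cut of~$G$.)

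So fix $c\in\KF$ and assume, for a contradiction, that $\delta:=\val(a-c)$ satisfies $\delta>\mu(a)$. The crux of the argument---and the step I expect to be the only delicate one---is to observe that this hypothesis already forces $a$ itself into $\vcompb[\delta^-]$. Indeed, every $\gamma\in\mu(a)^L$ lies on the left of the cut $\mu(a)$ while $\delta$ lies on its right, so $\gamma<\delta$; hence $\mu(a)^L\subseteq(\delta^-)^L$, that is $\mu(a)\le\delta^-$, and part~\ref{EN:MU-2} of Lemma~\ref{LEM:MU} (which characterizes $\vcompb[\Lambda]$ as $\set{x:\mu(x)\le\Lambda}$) yields $a\in\vcompb[\delta^-]$.

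The rest is bookkeeping. Applying Axiom~\ref{AX:CB} for $\vcompf$ at the cut $\delta^-$, write $c=c_1+r$ with $c_1\in\vcomp[\delta^-]$ and $\val(r)\ge\delta$. Since $\vcompbf$ extends $\vcompf$, $c_1\in\vcompb[\delta^-]$, so $a-c_1\in\vcompb[\delta^-]$ as well; on the other hand $a-c_1=(a-c)+r$ with $\val(a-c)=\delta$ and $\val(r)\ge\delta$, so $\val(a-c_1)\ge\delta$. Now Axiom~\ref{AX:CF} gives $\vcompb[\delta^-]=t^\delta\vcompb[0^-]$, so $a-c_1=t^\delta y$ with $y\in\vcompb[0^-]$ and $\val(y)=\val(a-c_1)-\delta\ge 0$; hence $y\in\vcompb[0^-]\cap\vring[0^-]=(0)$ by Axiom~\ref{AX:CB}. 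Therefore $a=c_1\in\KF$, contradicting $a\in\bR\setminus\KF$. This proves $\val(a-c)<\mu(a)$ for all $c\in\KF$, and the reduction of the first paragraph gives $\La\le\mu(a)$.
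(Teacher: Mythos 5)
Your proof is correct and is essentially the paper's own argument: assume some $c\in\KF$ has $\val(a-c)$ beyond $\mu(a)$, split $c$ inside $\KF$ using $\vcompf$ at a cut $\geq\mu(a)$, observe that $a-c_1$ then lies in $\vcompb[\cdot]\cap\vring[\cdot]$, and conclude $a=c_1\in\KF$, a contradiction. The only cosmetic differences are that the paper works at the cut $\Gamma=\mu(a)$ and kills $a-c_1$ directly by Axiom~\ref{AX:CB} for $\vcompbf$, whereas you work at $\delta^-$ and take a harmless (and unnecessary) detour through Axiom~\ref{AX:CF} and the cut $0^-$.
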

\begin{proof}
Suppose not.
Let $\Gamma := \mu(a)$ and $c \in \Kf$ such that
\[
\Gamma < v(a - c). 
\]
Decompose $c$ at $\Gamma$: $c = c_1 + c_2$, with $c_1 \in \vcompG$
and $c_2 \in \vringG$.
Since $a - c_1 = (a - c) + c_2$, we have
\[
\val(a - c_1) \geq \min \Pa{ v(a - c), v(c_2)} > \Gamma.
\]
Moreover, $a \in \vcompbG$ by definition of $\Gamma$, and $c_1 \in
\vcompbG$, thus $a - c_1 \in \vcompbG \cap \vringG$.
Therefore, $a - c_1 = 0$, hence $a \in \Kf$, which is absurd.
\end{proof}
\begin{corollary}\label{COR:COMPLETION}
If $\FF$ is contained in the completion of $\KF$, then $\vcompf$ itself
is the unique \wtoc which $\FF$ extends $\vcompf$ from $\KF$ to $\FF$.
Moreover, $\vcompbf$~is multiplicative if and only if $\vcompf$ is.
\end{corollary}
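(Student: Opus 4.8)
The plan is to show that the hypothesis ``$\FF$ lies in the completion of $\KF$'' forces the extension to be essentially trivial: since $\FF/\KF$ is immediate, the value group (hence the index set $\vgh$) is unchanged, and for every cut $\Lambda<+\infty$ the member $\vcomp[\Lambda]$ will be unaltered, the only modification being that the top member, previously $\vcomp[+\infty]=\KF$, must become $\FF$. The single input I need from the hypothesis is the elementary observation that, $\KF$ being dense in $\FF$, the set $\set{\val(a-c):c\in\KF}$ is cofinal in $\vg$ for every $a\in\FF$, so that $\La=+\infty$ for all $a\in\FF$.

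For existence I would set $\vcompb[\Lambda]:=\vcomp[\Lambda]$ for $\Lambda<+\infty$ and $\vcompb[+\infty]:=\FF$, and verify the axioms for the ring $\FF$. Axioms \ref{AX:CA}, \ref{AX:CC}, \ref{AX:CD}, \ref{AX:CF} (and \ref{AX:CE} when $\vcompf$ is multiplicative) involve only members indexed by cuts $<+\infty$ — note $\gamma+\Lambda<+\infty$ and $\Lambda+\Gamma<+\infty$ whenever the arguments are $<+\infty$ — together with the degenerate values at $\pm\infty$, so they pass to $\vcompbf$ verbatim. The only axiom requiring the hypothesis is \ref{AX:CB}, i.e.\ $\vcompb[\Lambda]\oplus\set{x\in\FF:\val(x)>\Lambda}=\FF$ for $\Lambda<+\infty$. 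Directness is automatic: since $\vcomp[\Lambda]\subseteq\KF$, one has $\vcomp[\Lambda]\cap\set{x\in\FF:\val(x)>\Lambda}=\vcomp[\Lambda]\cap\set{x\in\KF:\val(x)>\Lambda}=(0)$ by \ref{AX:CB} for $\vcompf$; for spanning, given $a\in\FF$ I pick $c\in\KF$ with $\val(a-c)>\Lambda$ (possible because $\La=+\infty$), split $c=x+r$ in $\KF$ via \ref{AX:CB} for $\vcompf$, and note that $a=x+\bigl(r+(a-c)\bigr)$ is the required decomposition in $\FF$.

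For uniqueness — the only step that genuinely uses the theory of this section — let $\vcompbf$ be any weak tower of complements for $\FF$ extending $\vcompf$, with associated function $\mu$ (Definition~\ref{MU}). Fix $\Lambda<+\infty$ and take $a\in\vcompb[\Lambda]$; then $\mu(a)\le\Lambda<+\infty$ by part~\ref{EN:MU-2} of Lemma~\ref{LEM:MU}. Were $a\notin\KF$, Lemma~\ref{LEM:MU-LAMBDA} (applied with $\bR=\FF$) would give $\La\le\mu(a)<+\infty$, contradicting $\La=+\infty$; hence $a\in\vcompb[\Lambda]\cap\KF$. A short argument using \ref{AX:CB} for both $\vcompf$ and $\vcompbf$ then gives $\vcompb[\Lambda]\cap\KF=\vcomp[\Lambda]$ (for $a\in\vcompb[\Lambda]\cap\KF$, split $a=x+r$ in $\KF$ with $x\in\vcomp[\Lambda]$; then $r=a-x\in\vcompb[\Lambda]$ has positive value relative to $\Lambda$, so $r=0$), whence $\vcompb[\Lambda]=\vcomp[\Lambda]$; and $\vcompb[+\infty]=\FF$ is forced. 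Thus $\vcompbf$ is precisely the family built above. The ``moreover'' follows at once: multiplicativity of $\vcompf$ yields that of the constructed $\vcompbf$ by the existence step, and conversely restricting \ref{AX:CE} for $\vcompbf$ to cuts $<+\infty$, where $\vcompb[\Lambda]=\vcomp[\Lambda]$, recovers \ref{AX:CE} for $\vcompf$.

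I expect the main obstacle to be conceptual rather than computational: one must resist reading ``$\vcompf$ itself'' too literally (the top member truly grows from $\KF$ to $\FF$) and must check that every axiom survives at the cut $+\infty$. The genuine content sits in the uniqueness step, where Lemma~\ref{LEM:MU-LAMBDA} combined with $\La=+\infty$ rules out any element of $\FF\setminus\KF$ being absorbed into some $\vcompb[\Lambda]$ with $\Lambda<+\infty$; the rest is bookkeeping around $+\infty$ together with the one-line approximation argument for the spanning axiom.
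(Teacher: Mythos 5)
Your proposal is correct and essentially reproduces the paper's proof: the existence argument (pick $c\in\KF$ with $v(a-c)>\Gamma$ by density, split $c$ inside $\KF$, and absorb $a-c+c_2$ into $\vring[\Gamma]$) is exactly the paper's, and your uniqueness step reaches the same contradiction, namely a nonzero element of $\vcompb[\Gamma]\cap\vring[\Gamma]$. The only difference is cosmetic: you first force $\vcompb[\Lambda]\subseteq\KF$ via Lemma~\ref{LEM:MU-LAMBDA} and then split inside $\KF$, whereas the paper splits $a\in\vcompb[\Gamma]$ directly over $\FF$ using the decomposition furnished by the existence step; your extra bookkeeping at the cut $+\infty$ and for the ``moreover'' clause is harmless and merely more explicit than the paper.
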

\begin{proof}
\textbf{\textsf{Existence}}.
Define $\vcompbG = \vcompG$.
All the properties of (weak) \towcomp for $\vcompbf$ are obvious,
except possibly the fact that $\vcompbG + \vringG = \FF$.
Let $a \in \FF$, and $c \in \Kf$ such that $\val(a - c) > \Gamma$.
Decompose $c = c_1 + c_2$, with $c_1 \in \vcompG$ and $\val(c_2) > \Gamma$.
Then, $a = c_1 + (a - c + c_2)$.
Therefore, $c_1 \in \vcompbG$ and $a - c + c_2 \in \vringG$.

\textbf{\textsf{Uniqueness}}.
If for some $\Gamma$ we would have that there is some $a\in \vcompbG
\setminus \vcompG$ then we could decompose $a = a_1 + a_2$ with $a_1
\in \vcompG$ and $a_2\in\vringG$. But then $0\ne a-a_1\in \vcompbG
\cap\vringG$, contradiction.
\end{proof}
\sn
We will consider the case when $\FF := \KF(a)$ for some
$a\in\FF\setminus\KF$. Let $\isequence$ be a pseudo Cauchy sequence in
$\KF$, without a limit in $\KF$, and converging to $a\in\FF$. Note that
in this case
\[
\La = \set {\val(a - \anu): \nu\in I}^+.
\]
\mn
We will say that a certain property of the sequence members $\anu$ holds
\emph{eventu\-al\-ly} (or \emph{for $\nu$ large enough}) if there is
some $\nu_0\in I$ such that it holds for all $\nu\geq\nu_0$, $\nu\in I$,
and we will say that it holds \emph{frequently} if for all $\nu'\in I$
it holds for some $\nu\in I$ with $\nu\geq\nu'$.
\mn
We will assume the reader to be familiar with the basic theory of
pseudo Cauchy sequence as outlined in \cite{Kap}.
If $\isequence$ is of algebraic type, let $m$ be the degree of a minimal
polynomial for it. Otherwise, let $m:=+\infty$. In both cases, $m$ is
the maximum such that any polynomial $p(X)\in\KF[X]$ of degree less than
$m$ will satisfy $\val\Pa{p(\anu)} = \val\Pa{p(a_\mu)}$ for $\nu, \mu$
large enough.
\sn
\begin{fhypothesis}                         \label{FH}
\begin{enumerate}
\renewcommand{\theenumi}{\alph{enumi})}
\renewcommand{\labelenumi}{\alph{enumi})}
\item Either $\isequence$ is of transcendental type
(and therefore $a$ is transcendental over $\KF$),
\item or $a$ is a root of a minimal polynomial for $\isequence$.
\end{enumerate}
\end{fhypothesis}
In either case, $[\FF:\KF] = m$.
Let $\LF:=\KF[a]$, the ring generated by $\KF$ and $a$. Every element of
$\LF$ can be written in a unique way as a polynomial in $a$ of degree
less than $m$. Moreover, if $a$ is algebraic (case b)), then $\LF=\FF$.
\sn
Decompose each $\anu = \anu' + \anu''$, where $\anu'\in\vcomp[\La]$,
$\anu''\in\vring[\La]$.
\sn
\begin{remark}
$\Pa{\anu'}_{\nu\in I}$ is a pseudo Cauchy sequence with the same
limits as~$\isequence$.
\end{remark}
Hence, we can use $\Pa{\anu'}_{\nu\in I}$ instead of $\isequence$,
and, \wloG, we can assume that $\anu \in \vcomp[\La]$.
\sn
\begin{definizione}\label{DEF:EXT-RING}
If $\La < +\infty$, for any $\Gamma \in \vgh$, define $\vcompbG$ to be
the $\kf$-linear subspace of~$\LF$ generated by $\vcomp[\Lambda]$,
together with the monomials of the form
\begin{equation}\label{EQ:EXT-RING}
c a^n, \quad \text{where } n < m, \text{ and } c \in \vcomp[\Gamma - n\La].
\end{equation}
If instead $\La = + \infty$, define $\vcompbG := \vcompG$.
\end{definizione}
\begin{remark}\label{REM:EXT-RING-UNIQUE}
Let $n < m$ and $c \in \Kf$.
Then, $c a^n \in \vcompbG$ if and only if $c \in \vcomp[\Gamma - n \La]$.
\end{remark}
\begin{proof}
The ``if'' direction follows from the definition of $\vcompbf$.
Conversely, assume that $c a^n = c_1 a_1^{n_1} + \dotsb + c_l
a_l^{n_l}$, with $c_i \in \vcomp[\Gamma - n_i \La]$. Up to permuting and
adding together some of the $c_i$'s, we can assume that $n_1 < n_2 <
\dots < n_l < m$. Since the degree of
$a$ over $\KF$ is~$n$, if $c \neq 0$, then $\exists ! k \leq l$ such
that~$n = n_k$; moreover, $c_i = 0$ for every $i \neq k$, and $c =
c_{n_k}$. The conclusion follows.
%
\end{proof}
\begin{lemma}
Assume that the family $\vcompf$ is a tower of complements for $\KF$.
Let $b \in \vcomp[\Gamma]$, $c \in \vcomp[\Lambda - n\Gamma]$.
Then, $c b^n \in \vcompL$.
\end{lemma}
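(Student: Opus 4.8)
The plan is to translate the claim entirely into a statement about the order‑valued function $\mu$ and then to invoke the cut arithmetic already established. Since $\vcompf$ is a tower of complements, Lemma~\ref{LEM:MU}--\ref{EN:MU-2} says that for any $x\in\KF$ and any $\Theta\in\vgh$ one has $x\in\vcomp[\Theta]$ if and only if $\mu(x)\leq\Theta$. So the hypotheses $b\in\vcomp[\Gamma]$ and $c\in\vcomp[\Lambda-n\Gamma]$ are exactly $\mu(b)\leq\Gamma$ and $\mu(c)\leq\Lambda-n\Gamma$, and it suffices to prove $\mu(cb^n)\leq\Lambda$.

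First I would dispose of the degenerate cases: if $c=0$ or $b=0$ then $cb^n=0\in\vcompL$, and if $n=0$ then $cb^n=c\in\vcomp[\Lambda-0\cdot\Gamma]=\vcompL$ by hypothesis; so assume $b,c\neq 0$ and the natural number $n$ is positive. Next, using multiplicativity (Axiom~\ref{AX:CE}, in the form of Lemma~\ref{LEM:MU}--\ref{EN:MU-MULT}, i.e.\ $\mu(xy)\leq\mu(x)+\mu(y)$) together with an easy induction on $n$, I get $\mu(b^n)\leq n\mu(b)$, and then one more application yields $\mu(cb^n)\leq\mu(c)+\mu(b^n)\leq\mu(c)+n\mu(b)$.

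It remains to bound $\mu(c)+n\mu(b)$ by $\Lambda$. Here I use that $(\vgh,+,\leq)$ is an ordered commutative monoid (Remark~\ref{completion}): from $\mu(b)\leq\Gamma$ we get $n\mu(b)\leq n\Gamma$, and then, combining with $\mu(c)\leq\Lambda-n\Gamma$ and using monotonicity of $+$ in both arguments, $\mu(c)+n\mu(b)\leq(\Lambda-n\Gamma)+n\Gamma$. Finally Proposition~\ref{PROP:N-GAMMA} gives $(\Lambda-n\Gamma)+n\Gamma\leq\Lambda$, and chaining the inequalities produces $\mu(cb^n)\leq\Lambda$, i.e.\ $cb^n\in\vcompL$ by Lemma~\ref{LEM:MU}--\ref{EN:MU-2}.

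The only point requiring genuine care — and the reason the statement is not completely trivial — is that in $\vgh$ the ``difference'' $\Lambda-n\Gamma$ is not a true inverse: one does not have $(\Lambda-n\Gamma)+n\Gamma=\Lambda$, only the inequality of Proposition~\ref{PROP:N-GAMMA}. So the whole argument must be arranged as a one‑directional chain of $\leq$'s, never cancelling $n\Gamma$; fortunately that is exactly the direction needed. A secondary, purely bookkeeping, caution is to justify the iteration $\mu(b^n)\leq n\mu(b)$, which is fine since $+$ on $\vgh$ is associative and commutative with the monotonicity just used.
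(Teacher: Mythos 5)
Your proof is correct and is essentially the paper's own argument recast in the $\mu$-language: the paper applies Axiom~\ref{AX:CE} directly to get $cb^n \in \vcomp[(\Lambda - n\Gamma) + n\Gamma]$ and then concludes via Proposition~\ref{PROP:N-GAMMA} and Axiom~\ref{AX:CD}, while you run the identical chain through Lemma~\ref{LEM:MU} (the characterization $x\in\vcompL \Leftrightarrow \mu(x)\le\Lambda$ and subadditivity of $\mu$ under products), which are just the pointwise translations of those axioms. The single inequality doing real work, $(\Lambda - n\Gamma) + n\Gamma \le \Lambda$, is the same in both proofs.
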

\begin{proof}
Let $\Theta := (\Lambda - n\Gamma) + n\Gamma$.
By Axiom~\ref{AX:CE}, $cb^n \in \vcomp[\Theta]$.
By Proposition~\ref{PROP:N-GAMMA}, $\Theta \leq \Lambda$, therefore,
by Axiom~\ref{AX:CD}, $cb^n \in \vcompL$.
\end{proof}
\begin{proposition}\label{PROP:RING-EXT}
Assume that the family $\vcompf$ is a tower of complements for~$\KF$.
Then the family $\vcompbf := \set{\vcompbG: \Gamma \in \vgh}$ defined
above is a weak tower of complements for~$\LF$ extending $\vcompf$.
\end{proposition}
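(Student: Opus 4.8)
The plan is to verify the axioms \ref{AX:CA}--\ref{AX:CF} for $\vcompbf$ one at a time, noting first that when $\La = +\infty$ the statement reduces to Corollary~\ref{COR:COMPLETION}, so we may assume $\La < +\infty$ and work with the explicit spanning description of Definition~\ref{DEF:EXT-RING}. Axiom~\ref{AX:CA} is immediate since each $\vcompbG$ is defined as a $\kf$-span. For \ref{AX:CC} observe $t^\gamma \in \vcomp[\gamma^+] \subseteq \vcompb[\gamma^+]$ (the case $n=0$ of the generating monomials). For \ref{AX:CF}, the identity $\vcompb[\gamma + \Lambda] = t^\gamma\vcompbL$ should follow by checking it on generators: $t^\gamma\Pa{c a^n}$ with $c \in \vcomp[\Lambda - n\La]$ becomes $\Pa{t^\gamma c}a^n$ with $t^\gamma c \in \vcomp[\gamma + \Lambda - n\La] = \vcomp[(\gamma+\Lambda) - n\La]$ by \ref{AX:CF} for $\vcompf$ and the fact that $\gamma + (\Lambda - n\La) = (\gamma + \Lambda) - n\La$ (from the monoid/difference identities in Section~\ref{ordg}). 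The monotonicity \ref{AX:CD} reduces, via the just-proved Remark~\ref{REM:EXT-RING-UNIQUE} (which pins down exactly when $ca^n \in \vcompbG$), to the statement that $\Gamma \le \Lambda$ iff $\Gamma - n\La \le \Lambda - n\La$ for all $n < m$; one direction is Proposition/Lemma~\ref{LEM:SUM-DIFF}-type monotonicity of $-$, and the converse is recovered by taking $n=0$.

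The substantive axiom is \ref{AX:CB}, the direct-sum decomposition $\vcompbG \oplus \vring[\Gamma] = \LF$. For the sum being everything: take an arbitrary $p(a) = \sum_{n<m} c_n a^n \in \LF$ with $c_n \in \KF$; decompose each $c_n = c_n' + c_n''$ at $\Gamma - n\La$ using \ref{AX:CB} for $\vcompf$, so $c_n' \in \vcomp[\Gamma - n\La]$ and $\val(c_n'') > \Gamma - n\La$. Then $\sum c_n' a^n \in \vcompbG$ by definition, and I need $\sum c_n'' a^n \in \vring[\Gamma]$, i.e. $\val\Pa{\sum c_n'' a^n} > \Gamma$. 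Here is where the key estimate enters: $\val(c_n'' a^n) = \val(c_n'') + n\val(a)$, but more usefully $\mu(a^n) \le n\La$ (this should follow from Lemma~\ref{LEM:MU-LAMBDA}, $\La \le \mu(a)$, together with \ref{EN:MU-MULT}, giving $\mu(a^n) \le n\mu(a)$ — wait, we need an upper bound in terms of $\La$, not $\mu(a)$). This is the delicate point: I expect one actually argues with valuations directly, using $\val(a - \anu)$ unbounded below $\La$ so that $a^n$ modulo an element of large valuation behaves like $\anu^n \in \bR$, reducing the claim to the already-known decomposition inside $\bR$; then $\val(c_n'') + \val(a^n) \ge \val(c_n'')$ combined with $\val(c_n'') > \Gamma - n\La$ and $\val(a^n) \ge$ (something $\ge n\cdot 0$) is not quite enough, so the bookkeeping must instead use Proposition~\ref{PROP:N-GAMMA}: $c_n'' \in \vring[\Gamma - n\La]$ and $a^n \in \vcompb[n\La^{?}]$... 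The cleanest route: show directly that $c_n'' a^n \in \vring[\Gamma]$ by writing $a = a_\nu + (a - a_\nu)$ for $\nu$ large, expanding $a^n$ binomially, and checking each term lies in $\vring[\Gamma]$.

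For the intersection $\vcompbG \cap \vring[\Gamma] = (0)$: suppose $x = \sum_{n<m} c_n a^n$ with $c_n \in \vcomp[\Gamma - n\La]$ and $\val(x) > \Gamma$. I would run a "leading term" argument: pick a sequence member $\anu$ with $\val(a - \anu)$ very large (larger than any relevant bound), so that $x$ and $\sum c_n \anu^n$ agree modulo $\vring[\Gamma]$; but $\sum c_n \anu^n \in \bR$, and one can try to use the uniqueness of the decomposition in $\bR$ at the cut $\Gamma$ together with the Lemma just above ($c b^n \in \vcompL$ when $c \in \vcomp[\Lambda - n\Gamma]$, $b\in\vcomp[\Gamma]$) applied with $b = \anu \in \vcomp[\La]$ to conclude $\sum c_n \anu^n \in \vcompG$, forcing $\sum c_n \anu^n = 0$ in $\vcompG \cap \vring[\Gamma]$, and then a minimal-polynomial / degree argument (as in Remark~\ref{REM:EXT-RING-UNIQUE}) shows each $c_n = 0$. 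The main obstacle I anticipate is precisely this last intersection axiom: making the passage from $a$ to a sequence member $\anu$ rigorous requires knowing that the "error" $x - \sum c_n\anu^n$ has valuation $> \Gamma$, which needs a uniform bound on the valuations $\val(a-\anu)$ achievable, and then one must be careful that the degree-$<m$ uniqueness of representations in $\LF$ is compatible with the approximation — in the algebraic case $m$ is finite and $a$ satisfies the minimal polynomial, so $\anu^n$ for $n \ge m$ would need reduction, which is why the $n < m$ restriction in Definition~\ref{DEF:EXT-RING} is essential and must be respected throughout the estimates.
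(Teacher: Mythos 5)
You have the right outline --- dispatch the $\La=+\infty$ case via Corollary~\ref{COR:COMPLETION}, then check the axioms on the explicit spanning set, with \ref{AX:CB} as the only serious one --- and your treatment of \ref{AX:CA}, \ref{AX:CC}, \ref{AX:CD} and \ref{AX:CF} matches the paper's. But both halves of \ref{AX:CB} have a gap.

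For the sum $\vcompbG + \vringG = \LF$: after splitting $c_n = c_n' + c_n''$ at $\Gamma - n\La$, you propose to ``show directly that $c_n'' a^n \in \vring[\Gamma]$'' by expanding $a^n$ binomially around $\anu$. This claim is false in general. Since $\val(a) < \La$ (already $0\in\KF$ gives $\La > \val(a)$), we only have $\val(c_n'') > \Gamma - n\La$, which is \emph{weaker} than $\val(c_n'') > \Gamma - n\val(a)$, and that is the bound you would actually need for $\val(c_n'' a^n) > \Gamma$. If you carry out the binomial expansion, you will see that the $k=n$ term $c_n''(a-\anu)^n$ lands in $\vring[\Gamma]$ (because $\val(a-\anu)$ can be taken arbitrarily close to $\La$), but the remaining terms $k<n$, whose sum is $c_n''\bigl(a^n - (a-\anu)^n\bigr)$, do \emph{not} lie in $\vring[\Gamma]$. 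What they do have is lower degree. The paper exploits exactly this: one fixes a monomial $c a^n$, writes $c a^n = c_1 a^n + c_2(a-\anu)^n + c_2\bigl(a^n - (a-\anu)^n\bigr)$ with $c_1\in\vcomp[\Gamma - n\La]$, $c_2\in\vring[\Gamma - n\La]$, verifies the middle term is in $\vringG$, and decomposes the last term by \emph{induction on the degree $n$}. That induction is the missing step in your proposal.

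For the intersection $\vcompbG\cap\vringG = (0)$: your sketch (pass from $a$ to $\anu$, land in $\vcompG\cap\vringG = 0$, deduce a polynomial identity) is the right shape, but the way you propose to justify the passage --- a ``uniform bound on the valuations $\val(a-\anu)$'' --- is not what is needed, and indeed no such bound is available. The correct tool is the eventual stabilization of polynomial values along a pseudo Cauchy sequence: by the choice of $m$ (preceding the Fundamental Hypothesis), for any $q(X)\in\KF[X]$ of degree $<m$ one has $\val\bigl(q(\anu)\bigr) = \val\bigl(q(a)\bigr)$ for $\nu$ large. Applying this to $q(X) := \sum_{n<m} c_n X^n$ gives $\val\bigl(q(\anu)\bigr) = \val\bigl(q(a)\bigr) > \Gamma$ eventually, so $q(\anu)\in\vringG$; combined with $q(\anu)\in\vcompG$ (which follows since $\anu\in\vcomp[\La]$, using the auxiliary lemma about $c b^n$), this forces $q(\anu)=0$ for infinitely many $\nu$, hence $q=0$. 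Your write-up never invokes this stabilization and so leaves the crucial estimate unsecured.
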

\begin{proof}
When $\La = + \infty$, our assertion follows from
Corollary~\ref{COR:COMPLETION}; therefore, we assume that $\La < +
\infty$. Taking $n = 0$ in \eqref{EQ:EXT-RING}, we see that $\vcompG
\subseteq \vcompbG$, hence $\vcompbf$ extends $\vcompf$.
\sn
Axiom~\ref{AX:CA} is trivial, and Axiom~\ref{AX:CC} is a consequence of
the fact that $\vcompbf$ extends~$\vcompf$.
Since $\Gamma - n\La$ is an increasing function of~$\Gamma$,
Axiom~\ref{AX:CD} is also trivial.
 \sn
Axiom~\ref{AX:CB} splits into two parts: $\vcompbG + \vringG = \LF$ and
$\vcompbG \cap \vringG = (0)$.
The first part is equivalent to that every polynomial $p(a) \in\KF[a]$
of degree $n<m$ can be decomposed as $p(a) = p_1(a) + p_2(a)$, with
$p_1(a) \in \vcompbG$, and $p_2(a) \in \vringL$. We will prove this by
induction on~$n$. \Wlog, we can assume that $p(a)$ is a monomial~$ca^n$.
\sn
If $\La < +\infty$, decompose $c = c_1 + c_2$, $c_1 \in \vcomp[\Gamma -
n\La]$, $c_2\in\vring[\Gamma - n\La]$. By definition, $c_1 a^n \in
\vcompbG$. Moreover, $\val(c_2) \geq \gamma - n\lambda$, for some
$\gamma > \Gamma$, $\lambda < \La$. Hence, $\val(c_2) \geq \gamma
-\val\Pa{(a - \anu)^n}$ for some $\nu \in I$. Therefore, $c_2(a- \anu)^n
\in \vringG$. Finally, we get
\[
c a^n = \underbrace{c_1 a^n}_{\in\vcompbG} +
\underbrace{c_2 (a - \anu)^n}_{\in\vringG} + c_2 \Pa{a^n - (a - \anu)^n}.
\]
The polynomial (in $a$) $c_2\Pa{a^n - (a-a_\nu)^n}$ has degree less
than~$n$. Hence, by inductive hypothesis, it can be written as $b_1 +
b_2$, with $b_1 \in \vcompbG$, $b_2 \in \vringG$, and we get the
decomposition of $c a^n$.
\sn
For the second part, assume that
%
%
$q(a) := \sum_{n<m}c_n a^n \in \vringG \cap \vcompbG$, that is, for
every $n$, $c_n \in \vcomp[\Gamma - n\La]$, and $\val\Pa{q(a)}>\Gamma$.
Choose $\nu\in I$ large enough such that $\val\Pa{q(a)} =
\val\Pa{q(\amu)}$.
Since $\amu \in \vcomp[\La]$, $q(\amu) \in \vcompG$, because, by the
above lemma, each summand $c_n \amu^{n}$ is in $\vcompG$. Hence,
$q(\amu) = 0$ for every $\mu$ large enough. Therefore, $q(X)$ has
infinitely many zeroes, so $q(X) = 0$, and in particular $q(a) = 0$.
\sn
Finally, to prove Axiom~\ref{AX:CF}, observe that
\begin{eqnarray*}
c a^n \in \vcompbG & \Longleftrightarrow & c\in\vcomp[\Gamma - n\La]
\>\Longleftrightarrow\> t^\gamma c \in \vcomp[\gamma + \Gamma - n\La]\\
 & \Longleftrightarrow & t^\gamma c a^n \in \vcompb[\gamma + \Gamma].
\hfill
\end{eqnarray*} \qedhere
\end{proof}
\begin{lemma}\label{LEM:MU-POLY}
If $n < m$ and $c \in \KF$, then $\mu (c a^n) = \mu(c) + n \La$.
More generally, for every $c_0, \dotsc, c_{m-1} \in \KF$,
\[
\mu( \sum_{0 \leq n < m} c_n a^n) = \max_{n < m} \set{ \mu(c_n) + n \La}.
\]
In particular, $\mu(a) = \La$.
\end{lemma}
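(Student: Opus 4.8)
The plan is to reduce every assertion to a statement about the function $\mu$ attached to the extended family $\vcompbf$ (which is a weak tower of complements for $\LF$ by Proposition~\ref{PROP:RING-EXT}), using two ingredients: the characterization $\vcompbG=\set{x\in\LF:\mu(x)\le\Gamma}$ from Lemma~\ref{LEM:MU}, and the description of the spaces $\vcompbG$ furnished by Definition~\ref{DEF:EXT-RING} and Remark~\ref{REM:EXT-RING-UNIQUE}. Combining these, for every cut $\Gamma\in\vgh$ one has the chain of equivalences
\[
\mu(ca^n)\le\Gamma
\iff ca^n\in\vcompbG
\iff c\in\vcomp[\Gamma-n\La]
\iff \mu(c)\le\Gamma-n\La ,
\]
the first by Lemma~\ref{LEM:MU} for $\vcompbf$, the second by Remark~\ref{REM:EXT-RING-UNIQUE}, the third by Lemma~\ref{LEM:MU} for $\vcompf$. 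Hence the monomial formula $\mu(ca^n)=\mu(c)+n\La$ will follow once we know the purely order-theoretic equivalence
\[
\Theta\le\Gamma-n\La\iff\Theta+n\La\le\Gamma\qquad(\Theta,\Gamma\in\vgh),
\]
because then $\mu(ca^n)$ and $\mu(c)+n\La$ are $\le$-below exactly the same cuts of the linear order $\vgh$, hence equal.

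I would prove this last equivalence by induction on $n$. The case $n=0$ is trivial. For the inductive step, write $\Theta+n\La=(\Theta+(n-1)\La)+\La$ (associativity in the monoid $(\vgh,+)$, Remark~\ref{completion}) and apply the equivalence $\Lambda\ge\Gamma+\Theta\iff\Lambda-\Gamma\ge\Theta$ of Lemma~\ref{LEM:SUM-DIFF} (with $\La$ playing the role of $\Gamma$) to get $\Theta+n\La\le\Gamma\iff\Theta+(n-1)\La\le\Gamma-\La$; then apply the inductive hypothesis with $\Gamma-\La$ in place of $\Gamma$, and reassemble $(\Gamma-\La)-(n-1)\La=\Gamma-n\La$ by definition of iterated right difference. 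I expect this bookkeeping to be the only real obstacle: since cut arithmetic is not cancellative --- one only has $(\Lambda-n\Gamma)+n\Gamma\le\Lambda\le(\Lambda+n\Gamma)-n\Gamma$ (Proposition~\ref{PROP:N-GAMMA}) --- the equivalence cannot be obtained by naively adding or subtracting $n\La$ on both sides, but must be built from the exact equivalences of Lemma~\ref{LEM:SUM-DIFF} chained in the correct order. (Alternatively one can deduce it directly from Proposition~\ref{PROP:N-GAMMA} together with the monotonicity of $X\mapsto X-n\La$.)

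For the general identity $\mu\bigl(\sum_{n<m}c_na^n\bigr)=\max_{n<m}\set{\mu(c_n)+n\La}$, the inequality ``$\le$'' is immediate: $\mu$ is subadditive under $+$ (Lemma~\ref{LEM:MU}), so $\mu\bigl(\sum_{n<m}c_na^n\bigr)\le\max_{n<m}\mu(c_na^n)$, and each $\mu(c_na^n)=\mu(c_n)+n\La$ by the monomial formula. For ``$\ge$'', set $\Gamma:=\mu\bigl(\sum_{n<m}c_na^n\bigr)$, so that $\sum_{n<m}c_na^n\in\vcompbG$. Regrouping the $\kf$-span in Definition~\ref{DEF:EXT-RING} by powers of $a$, and using that each $\vcomp[\Gamma-n\La]$ is a $\kf$-subspace (Axiom~\ref{AX:CA}), shows $\vcompbG=\set{\sum_{n<m}d_na^n:d_n\in\vcomp[\Gamma-n\La]\text{ for all }n<m}$; since every element of $\LF$ has a \emph{unique} expression as a polynomial in $a$ of degree $<m$, we conclude $c_n\in\vcomp[\Gamma-n\La]$ for each $n$, i.e.\ $\mu(c_n)\le\Gamma-n\La$, i.e.\ $\mu(c_n)+n\La\le\Gamma$ by the equivalence above; taking the maximum over $n$ gives ``$\ge$''. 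Finally, applying the general identity (or the monomial formula with $n=1<m$) to $a$ itself gives $\mu(a)=\mu(1)+\La$, and $\mu(1)=0^+$ because $1\in\kf\subseteq\vcomp[0^+]$ (Lemma~\ref{LEM:COMPL-BASIC}) while $1\notin\vcomp[0^-]=\vcomp$ (as $1$ is a nonzero element of $\vring$ and $\vcomp\cap\vring=(0)$ by Axiom~\ref{AX:CB}); since $0^+$ is the neutral element of $(\vgh,+)$ (Remark~\ref{completion}), $\mu(a)=0^++\La=\La$.
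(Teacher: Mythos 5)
Your proof is correct and follows essentially the same route as the paper's: both compute $\mu(ca^n)$ via Lemma~\ref{LEM:MU} as $\inf\set{\Lambda : ca^n\in\vcompbL}$, unwind $ca^n\in\vcompbL$ using Remark~\ref{REM:EXT-RING-UNIQUE}, and then invoke the order arithmetic of Lemma~\ref{LEM:SUM-DIFF}. You are merely more explicit where the paper is terse --- you rightly observe that item 5 of Lemma~\ref{LEM:SUM-DIFF} handles a single subtraction and so the equivalence $\Theta\le\Gamma-n\La\iff\Theta+n\La\le\Gamma$ requires an induction on $n$ using the left-associative definition of $\Lambda-n\Gamma$, and you spell out the ``second point'' (which the paper declares obvious) via the regrouped description of $\vcompbG$ and uniqueness of polynomial representation in $\LF$.
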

\begin{proof}
By Lemma~\ref{LEM:MU} and Lemma~\ref{LEM:SUM-DIFF},
\[\begin{aligned}
\mu(c a^n) &= \inf \set{ \Lambda: c a^n \in \vcompbL} =
\inf \set{ \Lambda: c \in \vcomp[\Lambda - n \La] } =\\
& =  \inf \set{ \Lambda: \mu(c) \leq \Lambda - n \La} =
\inf \set{ \Lambda: \mu(c) + n \La \leq \Lambda}\\
& = \mu(c) + n \La.
\end{aligned}\]
The second point is now obvious.
\end{proof}
For the rest of this section, we will assume that $\vcompf$ is a T.o.C.,
and that $\vcompbf$ is built as in Proposition~\ref{PROP:RING-EXT}.
unless explicitly said otherwise.
\begin{lemma}\label{LEM:EXT-UNIQUE}
The family $\vcompbf$ is the unique \wtoc on $\LF$ such that:
\begin{enumerate}
\item $\mu(a) \leq \La$;
\item for every $n < m$ and $c \in \Kf$, $\mu(c a^n) \leq\mu(c)+n\mu(a)$.
\end{enumerate}
\end{lemma}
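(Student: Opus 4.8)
The plan is to show that any weak tower of complements $\vcompbf'$ on $\LF$ satisfying conditions (1) and (2) must coincide with the explicitly defined $\vcompbf$ from Proposition~\ref{PROP:RING-EXT}. First I would handle the degenerate case $\La = +\infty$ (i.e.\ $a$ in the completion of $\KF$): here $\LF = \KF$, and Corollary~\ref{COR:COMPLETION} already gives uniqueness, so assume $\La < +\infty$. The key observation is that by Corollary~\ref{corto7} a weak tower of complements is uniquely determined by its value at $0^-$, or equivalently (Lemma~\ref{LEM:MU} parts \ref{EN:MU-1}--\ref{EN:MU-2}) by the function $\mu$; so it suffices to show that conditions (1) and (2) force $\mu' = \mu$ on all of $\LF$, where $\mu'$ is the function attached to $\vcompbf'$ and $\mu$ the one attached to $\vcompbf$ (computed in Lemma~\ref{LEM:MU-POLY}).

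**The inequality $\mu' \leq \mu$.** Since every element of $\LF$ is uniquely $\sum_{n<m} c_n a^n$ with $c_n \in \KF$, and since $\mu'$ restricted to $\KF$ agrees with $\mu$ (because $\vcompbf'$ extends $\vcompf$, and $\mu$ is determined by the tower via Lemma~\ref{LEM:MU}), conditions (1) and (2) together with subadditivity of $\mu'$ under sums (Lemma~\ref{LEM:MU}~\ref{EN:MU-SUM}, which holds for any weak tower) give
\[
\mu'\Bigl(\sum_{n<m} c_n a^n\Bigr) \leq \max_{n<m}\mu'(c_n a^n) \leq \max_{n<m}\bigl(\mu(c_n) + n\La\bigr) = \mu\Bigl(\sum_{n<m} c_n a^n\Bigr),
\]
the last equality by Lemma~\ref{LEM:MU-POLY}. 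So $\vcompbf' \supseteq \vcompbf$ in the sense that $\vcomp[\Gamma] \subseteq \vcompb'[\Gamma]$ for every $\Gamma$ (using Lemma~\ref{LEM:MU}~\ref{EN:MU-2} to translate the $\mu$-inequality into an inclusion of subspaces).

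**The reverse inequality, and the main obstacle.** The harder direction is $\mu \leq \mu'$, equivalently $\vcompb'[\Gamma] \subseteq \vcompbG$ for every $\Gamma$. Here I would argue by contradiction: suppose some $x = \sum_{n<m} c_n a^n \in \vcompb'[\Gamma] \setminus \vcompbG$. Then some monomial $c_n a^n$ has $\mu(c_n a^n) = \mu(c_n) + n\La \not\leq \Gamma$, i.e.\ $c_n \notin \vcomp[\Gamma - n\La]$ by Lemma~\ref{LEM:SUM-DIFF}. The strategy is to decompose $x$ using the \emph{other} tower $\vcompbf$: write $x = x_1 + x_2$ with $x_1 \in \vcompbG$, $x_2 \in \vringG$ (Axiom~\ref{AX:CB} for $\vcompbf$). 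By the inclusion already established, $x_1 \in \vcompb'[\Gamma]$, hence $x_2 = x - x_1 \in \vcompb'[\Gamma]$. But $x_2 \in \vringG$, so $x_2 \in \vcompb'[\Gamma] \cap \vringG = (0)$ by Axiom~\ref{AX:CB} for $\vcompbf'$; thus $x = x_1 \in \vcompbG$, a contradiction. The point requiring care—and the main obstacle—is verifying that $\vringG$ is genuinely the \emph{same} ideal $\{y \in \LF : v(y) > \Gamma\}$ for both towers; this is automatic because $\vring[\Gamma]$ is defined intrinsically from the valuation on $\LF$ (see the definition preceding the first Remark of Section~\ref{SEC:FAMILY}) and does not depend on any tower, so both Axiom~\ref{AX:CB} instances refer to this common $\vringG$. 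With that in hand the contradiction closes, giving $\vcompbf' = \vcompbf$ and hence uniqueness.
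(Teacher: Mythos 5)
Your proof is correct and takes essentially the same route as the paper's: conditions (1) and (2) yield the inclusion $\vcompb[\Gamma] \subseteq \vcompbp{\Gamma}$ for every $\Gamma$ (you via $\mu$-subadditivity and Lemma~\ref{LEM:MU-POLY}, the paper directly on the generators $c a^n$ using Proposition~\ref{PROP:N-GAMMA}), and equality then follows because both families are complements of the same, tower-independent $\vring[\Gamma]$ --- exactly the step you spell out and the paper compresses into ``and thus $\vcompbf' = \vcompbf$''. The only blemish is your parenthetical claim that $\LF = \KF$ when $\La = +\infty$ (rather, $\LF$ is then contained in the completion of $\KF$), which is harmless since Corollary~\ref{COR:COMPLETION} is precisely what settles that case.
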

\begin{proof}
Let $\vcompbf'$ be another $\wtoc$ for $\LF$ satisfying the conditions.
By\linebreak
Lemma~\ref{LEM:MU-LAMBDA}, $\mu(a) = \La$.
Moreover, if $n < m$ and $c \in \vcomp[\Gamma - n\La]$,
\[
c a^n \in \vcompbp{(\Gamma - n \La) + n \La} \subseteq \vcompbp{\Gamma}.
\]
Therefore, $\vcompbG \subseteq \vcompbp{\Gamma}$, and thus $\vcompbf' =
\vcompbf$.
\end{proof}
\begin{lemma}\label{LEM:EXT-MU}
Let $q(X) \in \Kf[X]$ such that $\deg q = n < m$.
Then,
\begin{enumerate}
\item
If $q(X) \neq 0$, then $\val\Pa{q(a) - q(\anu)} > \val(q(a)) =
\val(q(\anu))$ eventually.
\item\label{EN:EXT-MU-1} $\mu(q(\anu)) \leq \mu(q(a))$ for every
$\nu \in I$.
\item\label{EN:EXT-MU-2} $q(a) \in \vcompbG$ if and only if $q(\anu)
\in \vcompG$ eventually.
\item\label{EN:EXT-MU-3} $\mu(q(a)) = \lim_{\nu \in I} \mu(q(\anu))$.
\item\label{EN:DECOMP} If $q(X) = q_1(X) + q_2(X)$, with $q_1(a) \in
\vcompG$ and $q_2(a) \in \vringG$ such that $\deg q_i<m$, then
$\deg q_i \leq n$.
\end{enumerate}
\end{lemma}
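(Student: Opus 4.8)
I would prove the five assertions in the order (2), (1), (3)--(4), (5), writing $q(X)=\sum_{k=0}^{n}c_{k}X^{k}$ with $c_{n}\neq 0$ and recalling (from the remark preceding Definition~\ref{DEF:EXT-RING}) that $\anu\in\vcomp[\La]$, so that $\mu(\anu)\leq\La$ by Lemma~\ref{LEM:MU}--\ref{EN:MU-2}. Part~(2) is then immediate: Lemma~\ref{LEM:MU-POLY} gives $\mu(q(a))=\max_{k\leq n}(\mu(c_{k})+k\La)$, while sub-additivity and sub-multiplicativity of $\mu$ on the tower $\vcompf$ (Lemma~\ref{LEM:MU}--\ref{EN:MU-SUM}, \ref{LEM:MU}--\ref{EN:MU-MULT}) yield $\mu(q(\anu))\leq\max_{k}\mu(c_{k}\anu^{k})\leq\max_{k}(\mu(c_{k})+k\mu(\anu))\leq\max_{k}(\mu(c_{k})+k\La)=\mu(q(a))$. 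Part~(1) I would simply derive from the basic theory of pseudo Cauchy sequences (\cite{Kap}): since $\deg q<m$, the values $\val(q(\anu))$ are eventually constant by the defining property of $m$, this constant lies in $G$ because $q\neq 0$ has finitely many roots, and $q(a)$ is a pseudo-limit of $(q(\anu))_{\nu\in I}$, so $\val(q(a)-q(\anu))$ is eventually strictly increasing and therefore eventually exceeds that constant; together these give $\val(q(a)-q(\anu))>\val(q(a))=\val(q(\anu))$ eventually.

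The heart of the lemma is parts~(3) and~(4), which I would treat together. First I would show $\lim_{\nu}\mu(\anu)=\La$: since $\val(a-\anu)=\min\supp(a-\anu)$ and $\set{\val(a-\anu):\nu\in I}^{+}=\La$, the series $\fsum(\anu)$ and $\fsum(a)$ agree on the initial segment $G^{<\val(a-\anu)}$, so every fixed $\gamma\in\supp(a)$ eventually lies in $\supp(\anu)$; combined with $\mu(\anu)\leq\La=\mu(a)$ this forces $\mu(\anu)\to\La$. Next, using that $\fsum$ restricted to $\KF$ is an embedding of valued fields into the power series field $\kGf$ (Theorem~\ref{THM:EMBEDDING}), I would argue that $\mu(c_{k}\anu^{k})=\mu(c_{k})+k\mu(\anu)$ for each $k$ (the top coefficients multiply to something nonzero in a field, so there is no cancellation at the top), and that for $\nu$ large the leading exponents of the monomials $c_{k}\anu^{k}$ are pairwise distinct, whence $\mu(q(\anu))=\max_{k}\mu(c_{k}\anu^{k})=\max_{k}(\mu(c_{k})+k\mu(\anu))$; by the previous step this converges to $\max_{k}(\mu(c_{k})+k\La)=\mu(q(a))$, which is~(4). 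Part~(3) then follows formally, since by Lemma~\ref{LEM:MU} one has $q(a)\in\vcompbG\iff\mu(q(a))\leq\Gamma$ and $q(\anu)\in\vcompG\iff\mu(q(\anu))\leq\Gamma$, and the pointwise limit statement (4) translates exactly into the tail behaviour claimed. The forward implication of (3) is in any case elementary and independent of the above: $q(a)\in\vcompbG$ forces $c_{k}\in\vcomp[\Gamma-k\La]$ for all $k$ by the coefficientwise description of $\vcompbG$ (Remark~\ref{REM:EXT-RING-UNIQUE}), so $\mu(q(\anu))\leq\max_{k}(\mu(c_{k})+k\La)\leq\Gamma$ by Proposition~\ref{PROP:N-GAMMA}, i.e.\ $q(\anu)\in\vcompG$ for \emph{every} $\nu$.

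Part~(5) I would settle by a uniqueness argument. Let $q=p_{1}+p_{2}$ be the decomposition of $q(a)$ produced in the proof of Proposition~\ref{PROP:RING-EXT}; an easy induction on $n$, tracking degrees through the explicit identity $ca^{n}=c_{1}a^{n}+c_{2}(a-\anu)^{n}+c_{2}\bigl(a^{n}-(a-\anu)^{n}\bigr)$ used there, shows $\deg p_{1},\deg p_{2}\leq n$. Since $\vcompbG\oplus\vringG=\LF$ (Axiom~\ref{AX:CB} for $\vcompbf$, which holds by Proposition~\ref{PROP:RING-EXT}), the decomposition $q(a)=q_{1}(a)+q_{2}(a)$ with $q_{1}(a)\in\vcompbG$, $q_{2}(a)\in\vringG$ is unique, so $q_{i}(a)=p_{i}(a)$; as $\deg(q_{i}-p_{i})<m$, uniqueness of the degree-$<m$ representation of $0$ gives $q_{i}=p_{i}$, hence $\deg q_{i}\leq n$.

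The step I expect to be the genuine obstacle is the ``no cancellation at the top, eventually'' claim inside (3)--(4): one must rule out that the leading exponents of the monomials $c_{k}\anu^{k}$ coincide and that their top coefficients cancel for infinitely many $\nu$. This is where the precise nature of the pseudo-convergence $\anu\to a$ — in particular the fact that $\supp(\anu)$ stabilises toward $\supp(a)$ on longer and longer initial segments, so that the ``top'' of $\anu$ settles near a fixed location as $\nu\to\infty$ — has to be exploited carefully; a naive ``$\fsum(q(\anu))$ agrees with $\fsum(q(a))$ below $\val(q(a)-q(\anu))$'' argument does not suffice on its own, because $\val(q(a)-q(\anu))$ need not be cofinal in $\supp q(a)$.
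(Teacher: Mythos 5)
Parts (1), (2), the forward half of (3), and (5) of your proposal are essentially sound and agree with the paper: the paper also disposes of (1) by citing \cite{Kap}, proves (2) exactly as you do from Lemma~\ref{LEM:MU-POLY} and the submultiplicativity in Lemma~\ref{LEM:MU}, gets the forward direction of (3) from the coefficientwise description of $\vcompbG$, and derives (5) from the proof of Proposition~\ref{PROP:RING-EXT}. The gap is precisely where you place it yourself: your route to (4) (and, through it, to the converse direction of (3)) rests on the claim that eventually $\mu(q(\anu))=\max_k\bigl(\mu(c_k)+k\mu(\anu)\bigr)$, i.e.\ ``no cancellation at the top''. This is not a routine verification you may defer: Lemma~\ref{LEM:MU} only gives the inequality $\mu(xy)\leq\mu(x)+\mu(y)$, and the equality you need can fail (when $\supp\fsum(x)$ has no maximum, multiplication can eat a cofinal piece of the support; and even when leading terms exist, the monomials $c_k\anu^k$ for different $k$ can have coinciding leading exponents with cancelling coefficients for infinitely many $\nu$). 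The pseudo-convergence of $(\anu)$ alone does not rule this out, so as written the converse of (3) and assertion (4) remain unproved.

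The paper's argument bypasses this support analysis entirely, and you already have all its ingredients. For the converse of (3): if $q(\anu)\in\vcompG$ eventually, split $q(a)=q_1(a)+q_2(a)$ with $q_1(a)\in\vcompbG$, $q_2(a)\in\vringG$ and $\deg q_i<m$ (Axiom~\ref{AX:CB} for $\vcompbf$, available by Proposition~\ref{PROP:RING-EXT}). By the forward direction, $q_1(\anu)\in\vcompG$ eventually; if $q_2\neq 0$, part (1) applied to $q_2$ gives $\val\Pa{q_2(\anu)}=\val\Pa{q_2(a)}>\Gamma$ eventually, so $q_2(\anu)\in\vringG$ eventually. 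Hence eventually $q_2(\anu)=q(\anu)-q_1(\anu)\in\vcompG\cap\vringG=(0)$, so the nonzero polynomial $q_2$ of degree $<m$ would have infinitely many roots among the $\anu$ --- a contradiction; thus $q_2=0$ and $q(a)=q_1(a)\in\vcompbG$. For (4) no exact formula for $\mu(q(\anu))$ is needed: by (2) the limsup is $\leq\mu(q(a))$, and if the liminf $\Gamma'$ were $<\mu(q(a))$, choose $\Gamma'\leq\Gamma<\mu(q(a))$ with $\mu(q(\anu))\leq\Gamma$ frequently, split $q(a)$ at $\Gamma$ (so that $q_2\neq 0$) and run the same argument to get $q_2(\anu)=0$ frequently, again forcing $q_2=0$, absurd. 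So the missing idea is the combination ``split at $\Gamma$, use part (1), and use that a nonzero polynomial of degree $<m$ cannot vanish at infinitely many $\anu$'', rather than any stabilization of the supports of the $\anu$; I would rewrite your treatment of (3)--(4) along these lines.
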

\begin{proof}
For the first assertion, see \cite{Kap}.

\par\smallskip
By Lemma~\ref{LEM:MU-POLY},
\[
\mu(q(a)) = \max_{i\leq n} \set{\mu(b_i) + i \La}.
\]
Since $\vcompf$ is multiplicative, $\mu(q(\anu)) \leq
\max_i\set{\mu(b_i) + i \mu(a)}$. Therefore, since $\mu(a_\nu) \leq
\La$, we have $\mu(q(\anu)) \leq \mu(q(a))$.

\par\smallskip
Let $q(\anu) \in \vcompG$ eventually. Decompose $q(a) = q_1(a) +
q_2(a)$, with $q_1(a) \in \vcompbG$, and $q_2(a) \in \vringG$. Then,
$q_1(\anu)$ and $q(\anu)$ are in $\vcompG$ eventually, and therefore
$q_2(\anu) = 0$ eventually. Thus, $q_2(X)$ has infinitely many zeroes,
hence $q_2 = 0$. Therefore, we have proved assertion~\ref{EN:EXT-MU-2}.

\par\smallskip
For assertion~\ref{EN:EXT-MU-3}, let $\Gamma':= \liminf_{\nu \in I}
\mu(q(\anu))$. Then $\Gamma' \leq \mu(q(a))$. If, for a contradiction,
$\Gamma' < \mu(q(a))$, choose $\Gamma >\Gamma'$ such that $\mu(q(\anu))
\leq \Gamma$ frequently. Decompose $q(a) = q_1(a) + q_2(a)$, with
$q_1(a) \in \vcompG$ and $0 \neq q_2(a) \in \vringG$. Then by the third
assertion, $q_1(\anu) \in \vcompG$ and $q_2(a_\nu) \in \vringG$
eventually, hence $q_2(a_\nu) = 0$ frequently, and therefore $q_2 = 0$,
which is absurd.

\par\smallskip
Now consider $\Gamma'':= \limsup_{\nu \in I} \mu(q(\anu))$. Then
$\Gamma''\geq \liminf_{\nu \in I} \mu(q(\anu))=\mu(q(a))$. By our second
assertion, $\Gamma''\leq\Gamma$; thus, $\lim_{\nu \in I} \mu(q(\anu))$
always exists and is equal to $\mu(q(a))$.

\par\smallskip
Assertion~\ref{EN:DECOMP} is a consequence of the proof of
Proposition~\ref{PROP:RING-EXT}.
\end{proof}

\begin{lemma}\label{LEM:BASIC-MONOMIAL}
Let $\La < +\infty$, $n < m$, $d \in \vcomp[\La]$.
\begin{enumerate}
\item If $c\in\vcomp[\Lambda - n \La]$, then $c (a + d)^n \in \vcompbL$.
\item If $c\in\vcompL$, then $c (a + d)^n \in \vcompb[\Lambda + n \La]$.%
\footnote{It is enough that $c \in \vcomp[(\Lambda + n\La) -n\La]$.}
\end{enumerate}
In particular, if $0 <n <m$, then $(a + d)^n \in \vcompb[n \La]$.
\end{lemma}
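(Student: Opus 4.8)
The plan is to expand $c(a+d)^n$ by the binomial theorem (legitimate since $a,d$ commute in $\FF$) and to check that each resulting monomial lies among the defining generators of the relevant $\vcompbf$-space. Throughout we use that $\vcompf$ is a T.o.C.\ (so Axiom~\ref{AX:CE} is available), that $\vcompbf$ is the weak tower of Proposition~\ref{PROP:RING-EXT} (so it satisfies \ref{AX:CA} and is characterised on monomials by Remark~\ref{REM:EXT-RING-UNIQUE}), and that $\La<+\infty$, so Definition~\ref{DEF:EXT-RING} applies and $\vcompbG$ is the $\kf$-span of $\vcomp[\Gamma]$ and of the monomials $c'a^j$ with $j<m$, $c'\in\vcomp[\Gamma-j\La]$.

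For part~1, assume $c\in\vcomp[\Lambda-n\La]$ and $d\in\vcomp[\La]$. Write
\[
c(a+d)^n=\sum_{j=0}^{n}\binom{n}{j}\,(c\,d^{\,n-j})\,a^j ,
\]
a $\kf$-linear combination of the monomials $(c\,d^{\,n-j})\,a^j$ with $j\le n<m$. By Axiom~\ref{AX:CA} for $\vcompbf$ it then suffices to show $(c\,d^{\,n-j})\,a^j\in\vcompbL$ for each $j$, i.e.\ (Remark~\ref{REM:EXT-RING-UNIQUE}) that $c\,d^{\,n-j}\in\vcomp[\Lambda-j\La]$. I would obtain this from the Lemma immediately preceding Lemma~\ref{LEM:BASIC-MONOMIAL} (if $b\in\vcomp[\Gamma]$ and $c\in\vcomp[\Lambda-n\Gamma]$ then $cb^n\in\vcompL$), applied with base $b:=d\in\vcomp[\La]$, exponent $n-j$, and target cut $\Lambda-j\La$: the hypothesis needed is $c\in\vcomp[(\Lambda-j\La)-(n-j)\La]$, and $(\Lambda-j\La)-(n-j)\La=\Lambda-n\La$ holds literally from the definition of iterated right difference, so the hypothesis is exactly $c\in\vcomp[\Lambda-n\La]$. (For $j=n$ nothing is needed: $c\,a^n\in\vcompbG$ by definition of $\vcompbf$.)

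For part~2, I would reduce to part~1. Given $c\in\vcompL$, Proposition~\ref{PROP:N-GAMMA} gives $\Lambda\le(\Lambda+n\La)-n\La$, hence $c\in\vcomp[(\Lambda+n\La)-n\La]$ by Axiom~\ref{AX:CD}; applying part~1 with $\Lambda$ replaced by $\Lambda+n\La$ then yields $c(a+d)^n\in\vcompb[\Lambda+n\La]$, which also establishes the sharper hypothesis recorded in the footnote since only $c\in\vcomp[(\Lambda+n\La)-n\La]$ was used. Finally, for the last assertion, take $c:=1\in\vcomp[0^+]$ (Axiom~\ref{AX:CC}, or Lemma~\ref{LEM:COMPL-BASIC}.1) and $\Lambda:=0^+$ in part~2; since $0^+$ is the neutral element of $(\vgh,+)$ we have $0^+ + n\La=n\La$, so $(a+d)^n\in\vcompb[n\La]$.

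There is no genuine obstacle here: the statement is a bookkeeping consequence of multiplicativity of $\vcompf$, the definition of $\vcompbf$, and the cut arithmetic already in place. The only thing requiring care is lining up the cut identities with the exact earlier results—in particular $(\Lambda-j\La)-(n-j)\La=\Lambda-n\La$ and $\Lambda\le(\Lambda+n\La)-n\La$—and keeping the constraint $j\le n<m$ so that every exponent appearing stays below $m$ and the monomials $c'a^j$ genuinely lie among the generators of $\vcompbG$.
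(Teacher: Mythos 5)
Your proof is correct and follows essentially the same route as the paper's: binomial expansion of $c(a+d)^n$, reduction to the monomials $c\,d^{\,n-j}a^j$ whose coefficients are placed in $\vcomp[\Lambda-j\La]$ via multiplicativity of $\vcompf$ together with Proposition~\ref{PROP:N-GAMMA} (you route this through the earlier lemma on $cb^n\in\vcompL$, the paper inlines the identical computation), and part~2 reduced to part~1 via $\Lambda\le(\Lambda+n\La)-n\La$. The only cosmetic slip is writing $\vcompbG$ where $\vcompbL$ is meant for the $j=n$ term; otherwise everything, including the explicit treatment of the ``in particular'' clause with $c=1\in\vcomp[0^+]$, is fine.
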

\begin{proof}
For the first part, write
\begin{equation}\label{EQ:MONOMIALS}
c (a + d)^n = \sum_{i<n} \tbinom{i}{n} c d^{n-i} a^i.
\end{equation}
Note that $\tbinom{i}{n} \in \kf$.
Hence, by Proposition~\ref{PROP:N-GAMMA},
\[
\tbinom{i}{n} c d^{n-i} \in \vcomp[\Lambda - n \La] \vcomp[(n-i)\La]
\subseteq \vcomp[(\Lambda - n \La) + (n-i) \La] \subseteq
\vcomp[\Lambda - i \La].
\]
Therefore, by definition, each of the summands of~\eqref{EQ:MONOMIALS}
is in $\vcompbL$.
\sn
For the second part, define $\Lambda' := \Lambda + n\La$. By the first
part, to prove the conclusion, it suffices to show that $c \in
\vcomp[\Lambda' - n \La]$. By Proposition~\ref{PROP:N-GAMMA}, $\Lambda'
- n\La \geq \Lambda$, and the conclusion follows.
\end{proof}
\begin{corollary}\label{COR:MU-MONOMIAL-2}
If $n < m$ and $c \in \KF$, then, for every $\nu \in I$,
$\mu \Pa{c (a - \anu)^n} = \mu(c) + n \La = \mu(c a^n)$.
\end{corollary}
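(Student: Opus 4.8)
The plan is to expand $c(a-\anu)^n$ by the binomial theorem and read off its $\mu$-value directly from Lemma~\ref{LEM:MU-POLY}. First I would clear away the degenerate cases: if $c=0$ all three quantities are $-\infty$, and if $\La=+\infty$ then $\FF$ lies inside the completion of $\KF$, so $\vcompbf=\vcompf$ and all three sides are immediately seen to coincide (cf.\ Corollary~\ref{COR:COMPLETION}). Assuming now $c\neq 0$ and $\La<+\infty$, write
\[
c(a-\anu)^n \;=\; \sum_{i=0}^{n} \tbinom{n}{i}\,(-\anu)^{n-i}c\;a^i,
\]
a polynomial in $a$ of degree $n<m$ whose coefficients all lie in $\KF$, so that Lemma~\ref{LEM:MU-POLY} evaluates
\[
\mu\bigl(c(a-\anu)^n\bigr) \;=\; \max_{0\le i\le n}\Set{\,\mu\bigl(\tbinom{n}{i}(-\anu)^{n-i}c\bigr)+i\La\,}.
\]
Since $\tbinom{n}{n}=1$ and each $\vcomp[\Lambda]$ is a $\kf$-subspace, the summand for $i=n$ equals $\mu(c)+n\La$, which is also $\mu(ca^n)$ by the first assertion of Lemma~\ref{LEM:MU-POLY}. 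Thus the whole corollary reduces to showing that no summand with $i<n$ exceeds $\mu(c)+n\La$.

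Fix $i<n$. If $\tbinom{n}{i}$ vanishes in $\kf$ the summand is $-\infty$ and may be discarded; otherwise $\mu\bigl(\tbinom{n}{i}(-\anu)^{n-i}c\bigr)=\mu(\anu^{\,n-i}c)$, again because the $\vcomp[\Lambda]$ are $\kf$-subspaces. Here I would invoke that $\vcompf$ is a genuine tower of complements (hence multiplicative), so part~\ref{EN:MU-MULT} of Lemma~\ref{LEM:MU} gives $\mu(\anu^{\,n-i}c)\le (n-i)\mu(\anu)+\mu(c)$, combined with the fact that the pseudo Cauchy sequence has been normalized so that $\anu\in\vcomp[\La]$, whence $\mu(\anu)\le\La$ by part~\ref{EN:MU-2} of Lemma~\ref{LEM:MU}. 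Monotonicity and associativity of $+$ on $\vgh$ (so that $(n-i)\La+i\La=n\La$) then bound the $i$-th summand by $\mu(c)+(n-i)\La+i\La=\mu(c)+n\La$. Hence the maximum equals $\mu(c)+n\La=\mu(ca^n)$, as wanted.

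I do not expect a genuine obstacle here; the points requiring care are routine bookkeeping. In residue characteristic $p>0$ many of the coefficients $\tbinom{n}{i}$ may die, but this only deletes terms from the maximum while the decisive term $i=n$ always survives; and the estimate on the lower-order summands does genuinely rely on the earlier normalization $\anu\in\vcomp[\La]$, without which one could only bound $\mu(\anu)$ by the limit cut crudely. An alternative, perhaps the route hinted at by the placement of this corollary: the inequality $\mu(c(a-\anu)^n)\le\mu(c)+n\La$ falls straight out of part~2 of Lemma~\ref{LEM:BASIC-MONOMIAL} applied with $d:=-\anu\in\vcomp[\La]$, and the reverse inequality then follows from the uniqueness of the polynomial-in-$a$ representation of elements of $\vcompbf$ that underlies Remark~\ref{REM:EXT-RING-UNIQUE}, applied to the leading coefficient $c$ of $c(a-\anu)^n$.
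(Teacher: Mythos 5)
Your proposal is correct and matches the paper: the paper's proof is precisely your closing ``alternative'' route, namely a one-line application of part~2 of Lemma~\ref{LEM:BASIC-MONOMIAL} with $d=-\anu$ (using the normalization $\anu\in\vcomp[\La]$) to get $c(a-\anu)^n\in\vcompb[\mu(c)+n\La]$, with the equality $\mu(ca^n)=\mu(c)+n\La$ and the reverse inequality left to Lemma~\ref{LEM:MU-POLY}. Your main argument --- binomial expansion, Lemma~\ref{LEM:MU-POLY}, and the bound on lower-order terms via part~\ref{EN:MU-MULT} of Lemma~\ref{LEM:MU} together with $\mu(\anu)\leq\La$ --- simply unwinds the same estimate, and if anything makes the lower bound $\mu\Pa{c(a-\anu)^n}\geq\mu(c)+n\La$ more explicit than the paper's one-liner does.
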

\begin{proof}
Since $\anu \in \vcomp[\La]$, we have that
\[
c(a - \anu)^n \in \vcompb[\mu(c) + \ n \La].
\qedhere
\]
\end{proof}
\begin{proposition}\label{PROP:RING-MULT}
Let $p(X), q(X) \in\KF[X]$. Assume that $\deg p + \deg q < m$, and that
$p(a) \in \vcompbG$, $q(a) \in \vcompbL$. Then, $p(a)q(a) \in
\vcompb[\Gamma +\Lambda]$.
\end{proposition}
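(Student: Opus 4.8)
The plan is to reduce the statement to a single pair of monomials and then read it off from the cut arithmetic of Section~\ref{prelim}. If $\La = +\infty$ there is nothing to do: then $\vcompbf = \vcompf$ by Definition~\ref{DEF:EXT-RING}, this family is multiplicative by Corollary~\ref{COR:COMPLETION} (since $\vcompf$ is a tower of complements), and our claim is just an instance of Axiom~\ref{AX:CE}. So I would assume from now on that $\La < +\infty$, and also that $p, q \neq 0$ (the case of a zero factor being trivial).

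First I would make the hypotheses explicit in terms of coefficients. Since $\deg q \geq 0$, we have $\deg p < m$, so $p(a) = \sum_i c_i a^i$ with $c_i \in \KF$ and $0 \leq i \leq \deg p < m$ is the \emph{unique} way of writing $p(a)$ as a polynomial in $a$ over $\KF$ of degree $<m$. Arguing exactly as in the proof of Remark~\ref{REM:EXT-RING-UNIQUE} --- using that $1, a, \dotsc, a^{m-1}$ are $\KF$-linearly independent in $\LF$, so that $\vcompbG$ is precisely the set of such polynomials whose $i$-th coefficient lies in $\vcomp[\Gamma - i\La]$ for all $i$ --- the hypothesis $p(a) \in \vcompbG$ forces $c_i \in \vcomp[\Gamma - i\La]$ for every $i$. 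In the same way, writing $q(X) = \sum_j c'_j X^j$, the hypothesis $q(a) \in \vcompbL$ gives $c'_j \in \vcomp[\Lambda - j\La]$ for every $j$.

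Next I would expand the product. Because $\deg p + \deg q < m$, forming $p(a)q(a)$ involves no reduction modulo a minimal polynomial of $a$, so $p(a)q(a) = \sum_n e_n a^n$ with $e_n = \sum_{i+j=n} c_i c'_j$ and $0 \leq n \leq \deg p + \deg q < m$. Fix such an $n$ and a pair $(i,j)$ with $i+j=n$. As $\vcompf$ is multiplicative, Axiom~\ref{AX:CE} gives $c_i c'_j \in \vcomp[(\Gamma - i\La) + (\Lambda - j\La)]$; and Corollary~\ref{COR:N-GAMMA}, applied with $\La, i, j$ in the roles of $\Gamma, n, n'$ there and $\Gamma, \Lambda$ in the roles of $\Lambda, \Lambda'$, gives $(\Gamma - i\La) + (\Lambda - j\La) \leq (\Gamma + \Lambda) - n\La$. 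By Axiom~\ref{AX:CD} we therefore obtain $c_i c'_j \in \vcomp[(\Gamma + \Lambda) - n\La]$. Since this is a $\kf$-subspace (Axiom~\ref{AX:CA}), the sum $e_n$ also lies in $\vcomp[(\Gamma + \Lambda) - n\La]$, and as $n < m$ the monomial $e_n a^n$ belongs to $\vcompb[\Gamma + \Lambda]$ by Definition~\ref{DEF:EXT-RING}. Summing over $n$ and using once more that $\vcompb[\Gamma + \Lambda]$ is a $\kf$-subspace yields $p(a)q(a) \in \vcompb[\Gamma + \Lambda]$.

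I do not expect a genuine obstacle here; the argument is essentially a bookkeeping exercise. The two places that need care are (i) the passage from ``$p(a) \in \vcompbG$'' to ``each coefficient $c_i$ lies in $\vcomp[\Gamma - i\La]$,'' which relies on the degree bound $\deg p < m$ and the uniqueness of the degree-$<m$ representation in $\LF$, and (ii) the observation that $p(a)q(a)$ is computed without reducing modulo the minimal polynomial of $a$, which is exactly what the hypothesis $\deg p + \deg q < m$ guarantees. Given these, the conclusion is just the combination of Axioms~\ref{AX:CA}, \ref{AX:CD}, and~\ref{AX:CE} for $\vcompf$ with Corollary~\ref{COR:N-GAMMA}.
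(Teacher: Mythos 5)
Your proposal is correct and follows essentially the same route as the paper's proof: reduce to monomials (you spell out the full polynomial expansion, the paper just says ``w.l.o.g.\ monomials,'' but by Axiom~\ref{AX:CA} these are the same reduction), then combine Axiom~\ref{AX:CE} with Corollary~\ref{COR:N-GAMMA} to place each product $c_i c'_j$ in $\vcomp[(\Gamma + \Lambda) - (i+j)\La]$, and conclude from Definition~\ref{DEF:EXT-RING}. Your treatment of the $\La = +\infty$ case via Corollary~\ref{COR:COMPLETION} and your explicit appeal to the uniqueness of degree-$<m$ representations (Remark~\ref{REM:EXT-RING-UNIQUE}) are both sound and slightly more detailed than what the paper writes.
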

\begin{proof}
This is trivial if $\La = + \infty$. Otherwise, \wloG $p(a)$ and $q(a)$
are monomials of the form $c a^n$ and $d a^r$ respectively, with $c \in
\vcomp[\Gamma - n\La]$, $d\in\vcomp[\Lambda - r\La]$, and $n + r < m$.
Then, $p(a)q(a) = c d a^{n+r}$. Moreover, by
Corollary~\ref{COR:N-GAMMA},
\[
c d \in \vcomp[\Gamma - n\La] \vcomp[\Lambda - r\La] \subseteq
\vcomp[(\Gamma - n\La)+(\Lambda - r\La)] \subseteq
\vcomp[(\Gamma + \Lambda) - (n + r)\La],
\]
and the conclusion follows.
\end{proof}
\begin{corollary}\label{COR:FIELD-MULT}
If, in the above proposition, $m=\infty$ (case a)),
then $\vcompbf$ is a tower of complements
for $\LF$ (that is, it satisfies Axiom~\ref{AX:CE}).
\end{corollary}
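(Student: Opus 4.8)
The plan is to deduce Corollary~\ref{COR:FIELD-MULT} directly from Proposition~\ref{PROP:RING-MULT} together with Proposition~\ref{PROP:RING-EXT}, which already gives us that $\vcompbf$ is a \emph{weak} tower of complements for $\LF$. So the only axiom left to verify is~\ref{AX:CE}, namely $\vcompb[\Gamma]\vcompb[\Lambda]\subseteq\vcompb[\Gamma+\Lambda]$ for all cuts $\Gamma,\Lambda\in\vgh$. First I would dispose of the degenerate case $\La=+\infty$: then by Definition~\ref{DEF:EXT-RING} we have $\vcompbG=\vcompG$ for every $\Gamma$, so~\ref{AX:CE} for $\vcompbf$ is literally Axiom~\ref{AX:CE} for~$\vcompf$, which holds by hypothesis. (Alternatively this case is already covered by Corollary~\ref{COR:COMPLETION}.) Hence we may assume $\La<+\infty$.

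Now the point of invoking case a), i.e. $m=\infty$, is precisely that in the transcendental case \emph{every} element of $\LF=\KF[a]$ is a polynomial in $a$, and there is no degree bound at all: the hypothesis ``$\deg p+\deg q<m$'' in Proposition~\ref{PROP:RING-MULT} becomes vacuous. So I would take arbitrary $x\in\vcompbG$ and $y\in\vcompbL$ and write them (using Axiom~\ref{AX:CA}, that these are $\kf$-subspaces, and the explicit spanning set of Definition~\ref{DEF:EXT-RING}) as finite $\kf$-linear combinations of monomials $c\,a^n$ with $c\in\vcomp[\Gamma-n\La]$ and monomials $d\,a^r$ with $d\in\vcomp[\Lambda-r\La]$ respectively. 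By bilinearity of multiplication and the fact that $\vcompb[\Gamma+\Lambda]$ is a $\kf$-subspace, it suffices to show $c\,a^n\cdot d\,a^r\in\vcompb[\Gamma+\Lambda]$ for each such pair of monomials. But that is exactly the content of Proposition~\ref{PROP:RING-MULT}, whose degree hypothesis $n+r<m$ is automatic here since $m=\infty$. This establishes~\ref{AX:CE}, and therefore $\vcompbf$ is a tower of complements for~$\LF$.

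I do not expect a genuine obstacle here — the corollary is essentially a packaging statement. The one thing to be careful about is making explicit that Proposition~\ref{PROP:RING-EXT} has already delivered axioms~\ref{AX:CA}--\ref{AX:CF} (the ``weak tower'' axioms) for $\vcompbf$ extending $\vcompf$, so that reducing to~\ref{AX:CE} is legitimate; and that the reduction to monomials is valid because all three objects $\vcompbG$, $\vcompbL$, $\vcompb[\Gamma+\Lambda]$ are $\kf$-subspaces and multiplication $\LF\times\LF\to\LF$ is $\kf$-bilinear. The substantive work — the cut arithmetic showing $cd\in\vcomp[(\Gamma+\Lambda)-(n+r)\La]$ via Corollary~\ref{COR:N-GAMMA} — has already been done inside Proposition~\ref{PROP:RING-MULT}, so the corollary's proof is just the observation that in case a) the degree restriction there never bites.
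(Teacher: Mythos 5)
Your proof is correct and is essentially the argument the paper intends (the corollary is stated without its own proof because it is immediate from Proposition~\ref{PROP:RING-MULT} once the degree hypothesis becomes vacuous). One small redundancy: Proposition~\ref{PROP:RING-MULT} is already stated for arbitrary polynomials $p,q\in\KF[X]$ with $\deg p+\deg q<m$, and its own proof performs the reduction to monomials, so you could have applied it directly to arbitrary $x=p(a)\in\vcompbG$ and $y=q(a)\in\vcompbL$ without re-doing the expansion into monomials $c\,a^n$; this does not affect correctness, only brevity.
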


\par\smallskip
Now our aim is to extend a tower of complements from $\LF = \KF[a]$ to
$\FF = \KF(a)$. We could just use Proposition~\ref{quot}, but we we want
to show how the construction works in the present special situation,
where $\vcompf$ is a tower of complements for~$\KF$, $a$ an element
of transcendental type over $\KF$, satisfying case a) of the Fundamental
Hypothesis (\ie, $m = \infty$). Let $\vcompbf$ be the \wtoc defined
in~\ref{DEF:EXT-RING} (it is a \towcomp by Prop.~\ref{PROP:RING-EXT} and
Corollary~\ref{COR:FIELD-MULT}). Lemma~\ref{LEM:EXT-MU} shows that an
equivalent definition of $\vcompbf$ is given by:\\
for every $q(X) \in \KF[X]$,
$q(a) \in \vcompbG$ if and only if $q(\anu) \in \vcompG$ eventually.
\mn
The above definition makes sense also in~$\FF$. Therefore, we define
\par\smallskip\noindent
$\vcompcG:= \set{r(a): r(X) \in \KF(X) \et r(\anu) \in \vcompG
\text{ eventually}}$, and
\par\noindent
$\vcompcf := \Pa{\vcompcG}_{\Gamma\in \vgh}$.
\begin{lemma}\label{LEM:TOWER-TRASCENDENTAL}
The above defined family $\vcompcf$ is a \towcomp for~$\FF = \KF(a)$.
\end{lemma}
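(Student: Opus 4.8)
The plan is to show that $\vcompcf$ is obtained from $\vcompbf$ by the same construction as in Proposition~\ref{quot}, so that the work already done there carries over, and then to check that the axioms $\ref{AX:CA}$--$\ref{AX:CE}$ hold directly using the characterization of $\vcompcG$ in terms of the eventual behaviour of $q(\anu)$. The two ingredients I would lean on are: (i) every element of $\FF = \KF(a)$ has the form $r(a) = p(a)/q(a)$ with $p, q \in \LF = \KF[a]$, $q(a)\neq 0$; and (ii) for $q(X)\in\KF[X]$ of degree $<m=\infty$ (so for all $q$), the sequence $q(\anu)$ is eventually of constant value, and $\mu(q(a)) = \lim_\nu \mu(q(\anu))$, which is exactly Lemma~\ref{LEM:EXT-MU}. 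Note first that the defining condition is well posed: if $r(X) = p_1(X)/q_1(X) = p_2(X)/q_2(X)$ then $p_1 q_2 = p_2 q_1$ as polynomials, so $p_1(\anu)q_2(\anu) = p_2(\anu)q_1(\anu)$ for all $\nu$, and since $q_1(\anu), q_2(\anu)$ are eventually nonzero the two representations give the same element $r(\anu)\in\FF$ eventually; hence ``$r(\anu)\in\vcompG$ eventually'' does not depend on the chosen representation.

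**Key steps.** First I would verify $\ref{AX:CA}$: $\vcompcG$ is a $\kf$-subspace because if $r(\anu), s(\anu)\in\vcompG$ eventually and $\lambda\in\kf$, then $(r+\lambda s)(\anu) = r(\anu)+\lambda s(\anu)\in\vcompG$ eventually, using that $\vcompG$ is a $\kf$-subspace. Axiom $\ref{AX:CC}$ is immediate since $t^\gamma = t^\gamma\in\vcompG$ for $\gamma<\Gamma$ (constant sequence), so $t^\gamma\kf\subseteq\vcompc[\gamma^+]$. Axiom $\ref{AX:CD}$: if $\Gamma\leq\Lambda$ then $\vcompG\subseteq\vcompL$ gives $\vcompcG\subseteq\vcompcL$; for the converse, if $\Gamma\not\leq\Lambda$ pick $\gamma\in\Lambda^R\cap\Gamma^L$, then $t^\gamma\in\vcompcG\setminus\vcompcL$. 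Axiom $\ref{AX:CF}$: $r(a)\in\vcompcG$ iff $r(\anu)\in\vcompG$ eventually iff $t^\gamma r(\anu)\in\vcomp[\gamma+\Gamma]$ eventually (by $\ref{AX:CF}$ for $\vcompf$) iff $t^\gamma r(a)\in\vcompc[\gamma+\Gamma]$, noting $t^\gamma r(X)\in\KF(X)$. Axiom $\ref{AX:CE}$: if $p(a)/q(a)\in\vcompcG$ and $p'(a)/q'(a)\in\vcompcL$ then the product $pp'/(qq')$ satisfies $(pp'/qq')(\anu) = (p/q)(\anu)\cdot(p'/q')(\anu)\in\vcompG\vcompL\subseteq\vcomp[\Gamma+\Lambda]$ eventually, by $\ref{AX:CE}$ for $\vcompf$; hence $\vcompcG\vcompcL\subseteq\vcompc[\Gamma+\Lambda]$.

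**The main obstacle: $\ref{AX:CB}$.** The hard part will be the direct-sum decomposition $\vcompcG\oplus\vring[\Gamma] = \FF$. For the intersection $\vcompcG\cap\vring[\Gamma] = (0)$: if $r(a)\in\vcompcG$ with $\val(r(a))>\Gamma$, write $r = p/q$ with $q(a)\neq 0$; then $r(\anu)\in\vcompG$ eventually, and I need $\val(r(\anu))>\Gamma$ eventually — this follows because $\val(r(a)) = \val(p(a))-\val(q(a)) = \lim_\nu(\val(p(\anu))-\val(q(\anu)))$ using Lemma~\ref{LEM:EXT-MU}(1) applied to $p$ and to $q$ (both have degree $<m=\infty$); then $r(\anu)\in\vcompG\cap\vring[\Gamma] = (0)$ eventually, so $r(a) = 0$. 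For the sum $\FF = \vcompcG + \vring[\Gamma]$: given $r(a) = p(a)/q(a)$, I would first use Proposition~\ref{PROP:RING-EXT} to split $q(a)$ and work with $1/q(a)$, or more directly imitate the splitting argument in the proof of Proposition~\ref{quot} — writing $q(a) = c(1-a_0)$ with $v(c) = 0$, $v(a_0)>0$, and expanding geometrically — but translated into the eventual-sequence language: split each $r(\anu) = x_\nu + y_\nu$ at $\Gamma$ with $x_\nu\in\vcompG$, $y_\nu\in\vring[\Gamma]$, and show that $x_\nu$ is eventually constant (equivalently, eventually equals $p_1(\anu)/q(\anu)$ for a fixed rational function), reducing to the polynomial splitting of Proposition~\ref{PROP:RING-EXT} after clearing denominators. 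Concretely: given $r = p/q$, decompose $p(a) = p_1(a) + p_2(a)$ in $\LF$ at the cut $\Gamma + \mu(q(a)) + \Zed\mu(q(a) - \overline{q(a)})$ — i.e. at a cut chosen so that $p_1(a)/q(a)\in\vcompcG$ by the multiplicativity argument above and $p_2(a)/q(a)\in\vring[\Gamma]$ by the valuation estimate — which is exactly the mechanism of Proposition~\ref{quot}; the remaining check is that this decomposition has the required eventual-value property, which again reduces to Lemma~\ref{LEM:EXT-MU}. I expect that the cleanest writeup simply invokes Proposition~\ref{quot} for $\bR = \LF$ and then identifies its output tower with $\vcompcf$ via Lemma~\ref{LEM:EXT-UNIQUE}-style uniqueness, checking that $\vcompcf$ satisfies conditions $(1)$ and $(2)$ of the relevant uniqueness statement; that sidesteps redoing the splitting by hand.
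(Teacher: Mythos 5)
Your handling of the easy axioms \textbf{CA}, \textbf{CC}, \textbf{CD}, \textbf{CF}, \textbf{CE}, and the intersection half of \textbf{CB} is correct and matches the paper's proof: all of these reduce, as you say, to the fact that $\val\Pa{r(a)} = \val\Pa{r(\anu)}$ eventually (Lemma~\ref{LEM:EXT-MU}), and the paper dispatches them in a few lines each.

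The gap is in the surjectivity half of \textbf{CB}, which is where almost all the work in the paper's proof lies. Your route~1 (``imitate Proposition~\ref{quot}'') is indeed what the paper does, but your concrete sketch does not work: splitting $p(a)$ at a single cut and pairing it with $1/q(a)$ only succeeds when $\Zed\mu(q(a)) \leq \ig\Gamma$ (this is the paper's Claim~1, the ``easy case''). When $\Zed\mu(q(a))$ exceeds $\ig\Gamma$, no choice of splitting cut for the numerator alone can make both $p_1(a)/q(a) \in \vcompcG$ and $p_2(a)/q(a) \in \vringG$ simultaneously — the two requirements $\Xi + \Zed\mu(q) \leq \Gamma$ and $\Xi \geq \Gamma$ are in tension. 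What actually makes the argument close is the device of also truncating the \emph{denominator}: write $c = 1 - \varepsilon$, split $\varepsilon = \varepsilon_1 + \varepsilon_2$ at a suitable $\theta_0^-$ so that $c_1 := 1 - \varepsilon_1$ has strictly smaller length than $c$, expand $d/(c_1-\varepsilon_2)^k$ into a series $\sum_i n_{i,k}\, d\varepsilon_2^i/c_1^{i+k}$, handle the head terms by induction on the length of $c$ and the tail by a valuation estimate. The paper's proof runs a four-way case split on the position of $\Zed\Theta$ relative to $\ig\Gamma$ and $\Gamma$ and carries this induction through; none of that appears in your sketch, and your proposed splitting cut $\Gamma + \mu(q(a)) + \Zed\mu(q(a) - \overline{q(a)})$ even has the wrong sign (a subtraction is needed). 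Your route~2 (``invoke Proposition~\ref{quot} for $\bR = \LF$ and identify by Lemma~\ref{LEM:EXT-UNIQUE}-style uniqueness'') also does not close the gap as stated: Lemma~\ref{LEM:EXT-UNIQUE} characterizes a \wtoc on $\LF = \KF[a]$, not on $\FF = \KF(a)$, so there is nothing to invoke for the quotient field; and Corollary~\ref{corto7} requires knowing that $\vcompcf$ \emph{is} a weak tower of complements in the first place, which is precisely what you are trying to prove. One could make the comparison work by a direct subspace argument (show $B'[\Gamma] \subseteq \vcompcG$ where $B'$ is the Proposition~\ref{quot} tower, and combine with $\vcompcG \cap \vringG = 0$), but that inclusion $B'[\Gamma] \subseteq \vcompcG$ is itself essentially Claim~1 again and still requires the same estimates.
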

\begin{proof}
All axioms are immediate to show, except~\ref{AX:CB} and~\ref{AX:CE}.
The important fact used in the proof is that, for every $r(X) \in \KF(X)$,
$\val\Pa{r(a)} = \val\Pa{r(\anu)}$ eventually.
\sn
Let us prove Axiom~\ref{AX:CE} first. Let $c \in C[\Gamma]$ and $c' \in
C[\Gamma']$. Write $c = r(a)$ and $c' = r'(a)$, for some $r, r' \in
\KF(X)$. By definition, $r(\anu) \in A[\Gamma]$ and $r'(\anu) \in
A[\Gamma']$ eventually. Thus, $rr'(\anu) \in A[\Gamma + \Gamma']$
eventually, and therefore $c c' \in C[\Gamma + \Gamma']$.
 \sn
For Axiom~\ref{AX:CB}, we prove first that $C[\Gamma] \cap
\vring[\Gamma] = \{ 0 \}$. Let $c = r(a) \in C[\Gamma] \cap
\vring[\Gamma]$. Hence, $r(\anu) \in A[\Gamma]$ eventually.
Moreover, $\val\Pa{r(a)} = \val\Pa{r(\anu)}$ eventually, and therefore
$r(\anu) \in \vring[\Gamma]$ eventually. Thus, $r(\anu) = 0$ eventually,
hence $r(X)$ has infinitely many zeros, and so $r = 0$.
\sn
Now we prove that $\FF = \vcompG + \vringG$ for every $\Gamma \in \vgh$.
We have to show that every element $d/c\in\FF$, where $d \in \LF$ and $0
\neq c \in \LF$, can be split at any given $\Gamma$. But if $\gamma=
v(d/c)$, it suffices to show that $t^{-\gamma}d/c$ can be split at
$-\gamma+\Gamma$. Hence we may assume w.l.o.g.\ that $v(d)=v(c)=0$.
We will prove, by induction on the length
of~$c$ that, for every $0 < k \in \Nat$, and for every $\Gamma \in
\vgh$, $b := d/c^k$ splits at~$\Gamma$.
\sn
Let $d = p(a)$ and $c = q(a)$, for some $p, q \in \KF[X]$. By part 2
of Lemma~\ref{LEM:EXT-MU}, we then have that $\mu(p(\anu))\leq\mu(d)$
and $\mu(q(\anu))\leq\mu(c)$ for all $\nu\in I$. By part 4 of the same
lemma, $\mu(c)= \lim_{\nu\in I}\mu(q(\anu))$. Since $v(c) = 0$, $\mu(c)$ is
a positive cut. Therefore, $\mu(q(\anu))$ is a positive cut $\leq\mu(c)$
eventually, showing that $\Zed \mu(q^k(\anu))\leq\Zed \mu(c)$
eventually. Also, $v(p(\anu))=v(d)=0$ eventually.
\sn
\begin{claim}\label{CL:MU-FRACTION}
If $\mu(d)+ \Zed \mu(c) \leq \Gamma$, then $d/c^k \in \vcompcG$.
\end{claim}
\sn
Indeed, by the facts shown above, $\mu\Pa{p(\anu)} +\mu\Pa{q^k(\anu)}
\leq \mu(d)+\Zed \mu(c) \leq \Gamma$ eventually. Thus, by
Lemma~\ref{LEM:MU-INVERSE}, $\mu\Pa{(p/q^k)(\anu)} \leq \Gamma$
eventually. Therefore, $b \in \vcompcG$ by definition, and we are done.
\sn
Now split $d =: d_1 + d_2$ at~$\Gamma$. Note that $v(d_2/c^k) = v(d_2) >
\Gamma$, hence $d_2/c^k \in \vringG$. Therefore, it suffices to prove
that $d_1/c^k$ splits at~$\Gamma$; thus, we can assume that $d = d_1$,
that is, $\mu(d) \leq \Gamma$. In view of this, Claim 1 constitutes the
induction start for our induction on the length of $c$.
\sn
We write $c = 1 - \varepsilon$ for some $\varepsilon \in \LF$ with
$\val(\varepsilon) > 0$. We assume that $\varepsilon \ne 0$ because
otherwise, $b \in \LF$, and there is nothing to show. Define $\Theta :=
\mu(c) = \mu(\varepsilon)$. Note that $\Theta > 0$ and $q(\anu) \in
\vcomp[\Theta]$ eventually. There are 4 cases: $\Zed \Theta \leq
\ig\Gamma$, or $\ig \Gamma < \Zed\Theta < \Gamma$, or $\Zed\Theta =
\Gamma$, or $\Zed \Theta > \Gamma$.
\sn
If $\Zed \Theta \leq \ig \Gamma$, then, by Claim~\ref{CL:MU-FRACTION},
$b \in \vcompcG$, and we are done.
\sn
If $\Zed \Theta = \Gamma$, then $\Zed \Theta = \ig\Gamma$, and we are
in the previous case.
\sn
If $\Zed\Theta > \Gamma$ we have that $\Gamma < n_0 \theta_0$ for some
$\theta_0 < \Theta$ and $n_0\in \Nat$. Split $\varepsilon =:
\varepsilon_1 + \varepsilon_2$ at $\theta_0^-$: \ie, $\mu(\varepsilon_1)
< \theta_0 \leq \val(\varepsilon_2)$. Define $c_1 := 1 - \varepsilon_1$.
Note that, since $\theta_0 < \mu(c)$, $\lt(c_1) < \lt(c)$. Moreover,
\[
b = \frac d {c^k} = \frac d {(c_1 - \varepsilon_2)^k} =
\frac{d/c_1^k}{\Pa{1 - (\varepsilon_2/c_1)}^k} =
\sum_{i \in \Nat} n_{i, k} \frac{d \varepsilon_2^i} { c_1^{i + k}},
\]
for some natural numbers $n_{i,k}$.
Since $\lt(c_1) < \lt(c)$, then, by induction on $\lt(c)$, each summand
$n_{i, k} d \varepsilon_2^i / c_1^{i + k}$ splits at~$\Gamma$.
Moreover,
\[
\val\Pa{\sum_{i \geq n_0}  n_{i, k} \frac{d \varepsilon_2^i} {c_1^{i + k}}}
\geq \val\Pa{\frac{d \varepsilon_2^{n_0}} {c_1^{n_0 + k}}}
=  \val(\varepsilon_2^{n_0}) \geq n_0 \theta_0 > \Gamma,
\]
and therefore $b$ splits at~$\Gamma$.
\sn
Finally, if $\ig \Gamma < \Zed \Theta < \Gamma$, let $\Psi := \Gamma -
\Zed \Theta > 0$, and split $d = d_1 + d_2$ at~$\Psi$. Note that
$\mu(d_1) + \Zed\mu(c) \leq \Psi + \Zed\Theta \leq \Gamma$, and
therefore $d_1/c^k \in \vcompcG$. It remains to split $d_2 / c^k$. Let
$\delta := v(d_2) > \Psi$. By definition of $\Psi$, there exists $n_0
\in \Nat$, $\theta_0 < \Theta$, $\gamma > \Gamma$, such that $\delta
\geq \gamma - n_0 \theta_0$. Split $\varepsilon = \varepsilon_1 +
\varepsilon_2$ at $\theta^-$, and define $c_1 := 1 - \varepsilon_1$. As
before,
\[
d_2/c^k = d_2/(c_1 - \varepsilon_2)^n = \frac{d_2/c_1^k}{(1 -
\varepsilon_2/c_1)^k} =
\sum_i n_{i, k} \frac {d_2 \varepsilon_2^i} {c_1^{i + k}}\>,
\]
and by induction on the length of~$c$, each summand $n_{i,k}
d_2 \varepsilon_2^i /c_1^{i + k}$ splits at~$\Gamma$.
Moreover, for every $i \geq n_0$,
\[
v(n_{i,k} d_2 \varepsilon_2^i /c_1^{i + k} ) = v(n_{i,k}) + v(d_2) + i
v(\varepsilon_2) \geq \delta + n_0 \theta \geq \gamma > \Gamma,
\]
and we are done.
\end{proof}
\begin{proposition}\label{PROP:ALG-MULT-NEC}
Assume that $m < \infty$ (case b)). Let $p(X) := \sum_{n = 0}^m b_n
X^n$ $\in \KF[X]$ be the minimal polynomial of $a$ over~$\KF$ (thus,
$b_m=1$). Then a necessary and sufficient condition for the family
$\vcompbf$ to be multiplicative is that $\La = + \infty$ or $b_k \in
\vcomp[m\La - k\La]$, for $k = 0, \dotsc, m - 1$.
\end{proposition}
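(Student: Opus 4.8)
The plan is to recast multiplicativity of $\vcompbf$ purely in terms of the function $\mu$ and then reduce the whole question to estimating $\mu$ on the powers $a^j$. First dispose of the case $\La = +\infty$: by Definition~\ref{DEF:EXT-RING} (equivalently Corollary~\ref{COR:COMPLETION}) we then have $\vcompbf = \vcompf$, which is a tower of complements, so the left-hand side of the claimed equivalence holds, while the right-hand side holds because its first disjunct does; so assume henceforth $\La < +\infty$. Next I would record the elementary criterion: $\vcompbf$ is multiplicative if and only if $\mu(xy) \le \mu(x) + \mu(y)$ for all $x, y \in \LF$. One direction is part~\ref{EN:MU-MULT} of Lemma~\ref{LEM:MU}; for the converse, if $x \in \vcompbG$ and $y \in \vcompb[\Lambda]$ then $\mu(x) \le \Gamma$ and $\mu(y) \le \Lambda$ by part~\ref{EN:MU-2} of Lemma~\ref{LEM:MU}, hence $\mu(xy) \le \mu(x) + \mu(y) \le \Gamma + \Lambda$ by monotonicity of $+$ (Remark~\ref{completion}), so $xy \in \vcompb[\Gamma + \Lambda]$ again by part~\ref{EN:MU-2}.

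Then I would translate the hypothesis on the $b_k$. Since $b_k \in \KF$, part~\ref{EN:MU-2} of Lemma~\ref{LEM:MU} shows $b_k \in \vcomp[m\La - k\La]$ is equivalent to $\mu(b_k) \le m\La - k\La$, which by part~(5) of Lemma~\ref{LEM:SUM-DIFF} is equivalent to $\mu(b_k) + k\La \le m\La$. As $a^m = -\sum_{k<m} b_k a^k$, Lemma~\ref{LEM:MU-POLY} gives $\mu(a^m) = \max_{k<m}\{\mu(b_k) + k\La\}$, so the displayed condition (for $k = 0, \dots, m-1$) is exactly the single inequality $\mu(a^m) \le m\La$. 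Necessity now follows at once: if $\vcompbf$ is multiplicative then iterating part~\ref{EN:MU-MULT} of Lemma~\ref{LEM:MU} gives $\mu(a^m) \le m\,\mu(a)$, and $\mu(a) = \La$ by Lemma~\ref{LEM:MU-POLY}.

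For sufficiency assume $\mu(a^m) \le m\La$; I would establish $\mu(xy) \le \mu(x) + \mu(y)$ for all $x, y \in \LF$ via two steps. Step (i): for $c \in \KF$ and any $j \ge 0$, $\mu(c a^j) \le \mu(c) + \mu(a^j)$. Writing $a^j = \sum_{k<m} \beta_k a^k$ in the $\KF$-basis $1, a, \dots, a^{m-1}$ of $\LF$ (the $\beta_k \in \KF$ depending on $j$), apply Lemma~\ref{LEM:MU-POLY} to $c a^j = \sum_k c\beta_k a^k$ and to $a^j$, and combine with $\mu(c\beta_k) \le \mu(c) + \mu(\beta_k)$ (multiplicativity of $\vcompf$) and monotonicity of $+$. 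Step (ii): $\mu(a^j) \le j\La$ for all $j \ge 0$, by induction on $j$. For $j < m$ this is Lemma~\ref{LEM:MU-POLY} together with $\mu(1) \le 0^+$ (as $1 \in \vcomp[0^+]$); for $j = m$ it is the hypothesis; for $j > m$, write $a^j = a^{j-m} a^m = -\sum_{k<m} b_k a^{j-m+k}$, apply part~\ref{EN:MU-SUM} of Lemma~\ref{LEM:MU}, then step (i) and the induction hypothesis (legitimate since $j-m+k < j$), to obtain $\mu(a^j) \le \max_{k<m}\{\mu(b_k) + (j-m+k)\La\}$; now use the monoid identities $(j-m+k)\La = (j-m)\La + k\La$ and $(j-m)\La + m\La = j\La$, the hypothesis $\mu(b_k) + k\La \le m\La$, and monotonicity to conclude $\mu(a^j) \le j\La$.

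To finish, combine the two steps by bilinearity: for $x = \sum_{n<m} c_n a^n$ and $y = \sum_{r<m} d_r a^r$ with $c_n, d_r \in \KF$, expand $xy = \sum_{n,r} c_n d_r a^{n+r}$ and estimate successively with part~\ref{EN:MU-SUM} of Lemma~\ref{LEM:MU}, step (i), step (ii), $\mu(c_n d_r) \le \mu(c_n) + \mu(d_r)$, and $(n+r)\La = n\La + r\La$, getting $\mu(xy) \le \max_{n,r}\{(\mu(c_n) + n\La) + (\mu(d_r) + r\La)\} \le \mu(x) + \mu(y)$, the last inequality by $\mu(x) = \max_n\{\mu(c_n) + n\La\}$, $\mu(y) = \max_r\{\mu(d_r) + r\La\}$ (Lemma~\ref{LEM:MU-POLY}) and monotonicity. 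I expect the main obstacle to be step (ii): one must verify that the repeated reductions of $a^j$ via the minimal polynomial mesh correctly with the additive arithmetic of the cuts $n\La$ in the ordered monoid $(\vgh, +, \le)$ — the identities $(i+j)\La = i\La + j\La$ and the inequalities of Proposition~\ref{PROP:N-GAMMA} and Lemma~\ref{LEM:SUM-DIFF} — and that the induction is set up so that every exponent appearing on the right is genuinely smaller than $j$.
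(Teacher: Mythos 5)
Your proof is correct, and it takes a genuinely different route from the paper's. The necessity direction is essentially the same (both read off that $a^m\in\vcompb[m\La]$, or equivalently $\mu(a^m)\leq m\La$, and unpack it coefficient by coefficient via Lemma~\ref{LEM:MU-POLY} / Remark~\ref{REM:EXT-RING-UNIQUE}). For sufficiency, the paper works directly with Axiom~\ref{AX:CE}: it takes products $q(a)q'(a)$, reduces by bilinearity to monomials, runs a descent on the excess $e=n+n'-m$, splits the product into $S_1+S_2$, and substitutes $a^{m+e}=-\sum_{k<m}b_ka^{k+e}$, estimating the coefficients of $S_2$ with the ad hoc arithmetic Lemma~\ref{LEM:D-K-M-I-J} and a careful choice of auxiliary indices $l,l'<m$. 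You instead recast multiplicativity as the purely numerical condition $\mu(xy)\leq\mu(x)+\mu(y)$ (via Lemma~\ref{LEM:MU}, parts \ref{EN:MU-2} and \ref{EN:MU-MULT}, plus monotonicity), observe that the hypothesis on the $b_k$ collapses to the single inequality $\mu(a^m)\leq m\La$, and then close the argument by proving $\mu(a^j)\leq j\La$ for all $j$ by a clean induction on $j$, feeding it back through Lemma~\ref{LEM:MU-POLY} and bilinearity. What your version buys is conceptual economy: it replaces the index-chasing and the special-purpose Corollary~\ref{COR:D-K-M} / Lemma~\ref{LEM:D-K-M-I-J} by the standard monoid identities $n\Gamma+n'\Gamma=(n+n')\Gamma$ and the monotonicity of $+$ on $\vgh$ (both immediate from Remark~\ref{completion}), and it isolates the induction on powers of $a$ as the single place where the hypothesis is used. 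The only arithmetic care you need -- and you flag it correctly -- is that $(j-m+k)\La=(j-m)\La+k\La$ follows from associativity of left addition of cuts, whereas the analogous statement with $-$ in place of $+$ would \emph{not} be automatic (cf.\ Remark~\ref{REM:SUM-ITERATE}); your argument only ever uses the additive identity, so this is sound.
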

\begin{proof}
\textbf{\textsf{Necessity}}.
Since $a\in\vcompb[\La]$, if $\vcompbf$ is multiplicative, then $a^m \in
\vcompb[m\La]$. Hence, $b_0 + b_1 a + \dotsc + b_{m-1}a^{m-1} \in
\vcompb[m\La]$, and the conclusion follows from the definition
of~$\vcompb[m\La]$.
\sn
\textbf{\textsf{Sufficiency}}.
This is trivial if $\La = + \infty$.
Therefore, we can assume that $\La < +\infty$.

Take $q(X), q'(X) \in \KF[X]$ of degrees $n, n' < m$ respectively, and
cuts $\Lambda, \Lambda'$ with $q(a) \in \vcompbL$ and $q'(a) \in
\vcompb[\Lambda']$. We wish to show that $q(a)q'(a)\notin\vcompb[\Lambda
+ \Lambda']$. We write
\[\begin{aligned}
q(X)  & := \sum_{i=0}^n c_i X^i, \\
q'(X) & := \sum_{j=0}^{n'} c'_j X^j,
\end{aligned}\]
with $c_i \in \vcomp[\Lambda - i\La]$, $c'_j \in \vcomp[\Lambda' - j\La]$.
Define $e := n + n'-m \in \Zed$. We can assume that $e$ is minimal. If
$e < 0$, we have a contradiction with Proposition~\ref{PROP:RING-MULT}.
Hence, $e\geq 0$. Then,
\[
q(a)q'(a) = \sum_{i,j} c_i c'_j a^{i+j} =
\underbrace{\sum_{i+j < m+e} c_i c'_j a^{i+j}}_{S_1} +
\underbrace{a^{m+e} \sum_{i+j = m+e} c_i c'_j}_{S_2}.
\]
It suffices to prove that each summand in the sum above is in
$\vcompb[\Lambda + \Lambda']$. By definition of $\vcompbf$, $c_i a^i \in
\vcompbL$ and $c'_j a^j\in \vcompb[\Lambda']$. Therefore, by minimality
of $e$ and Proposition~\ref{PROP:N-GAMMA}, if $i + j < m + e$, then $c_i
c'_j a^{i+j} \in \vcompb[\Lambda + \Lambda']$, hence $S_1 \in
\vcompb[\Lambda + \Lambda']$.
\sn
Moreover, $a^{m+e} = -\sum_{k<m} b_k a^{k+e}$, hence
\begin{equation}\label{EQ:MULT-ALG}
S_2 = -\sum_{k<m} b_k a^{k+e} \sum_{i+j = m+e} c_i c'_j.
\end{equation}
It is enough to prove that $c_i c'_j b_k a^{k+e} \in \vcompb[\Lambda +
\Lambda']$ for each $i+j = m+e$, $k<m$. Fix $l, l' < m$ such that $l+l'
= k+e$. By Lemma~\ref{LEM:BASIC-MONOMIAL},
\[
a^l\in\vcompb[l \La]\>.
\]
By Lemma~\ref{LEM:D-K-M-I-J},
%
%
\begin{equation}\label{EQ:MULT-ALG-2}
c_i c'_j b_k \in \vcomp[(\Lambda - i \La) + (\Lambda' - j\La) +
(m\La - k\La)] \subseteq \vcomp[(\Lambda + \Lambda') - (k + e)\La].
\end{equation}
Hence, by Lemma~\ref{LEM:BASIC-MONOMIAL}
and Proposition~\ref{PROP:N-GAMMA},
\[
c_i c'_j b_k a^{l'}\in \vcompb[((\Lambda + \Lambda') - (k+d)\La)+ l'\La]
\subseteq \vcompb[\Lambda + \Lambda' - l\La].
\]
Since $l + l' < m + e$, we have that, by minimality of $e$ and
Proposition~\ref{PROP:N-GAMMA},
\[
c_i c'_j b_k a^{k+e} = a^l \cdot c_i c'_j b_k a^{l'} \in
\vcompb[((\Lambda + \Lambda') - l\La) + l\La] \subseteq
\vcompb[\Lambda + \Lambda'].
\qedhere
\]
%
\end{proof}
\sn
By Remark~\ref{REM:SUM-ITERATE}, in the case when $\La$ is of the form
$\gamma + \ig\La$ (in particular when it is the upper edge of a group),
the hypothesis of Proposition~\ref{PROP:ALG-MULT-NEC} is equivalent to
$b_k \in \vcomp[\La]$, for $k = 0, \dots, m - 1$.
\sn
An important example when $\La$ is a group is when $a$ is Henselian
over~$\KF$ (that is, $\val a \geq 0$, and $p(X)$, the minimal polynomial
of~$a$, has coefficients in~$\vring$, and $\val(\dot p(a)) = 0$),
cf.~\cite{DEL}.

\begin{lemma}\label{LEM:TOWER-HENSEL}
Let $\vcompf$ be a tower of complements for~$\KF$, and
$\KH$ be the Henselization of~$\KF$.
Then there is a tower of complements $\vcompbf$ for $\KH$
extending~$\vcompf$.
\end{lemma}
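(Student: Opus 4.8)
The plan is to reduce the statement to a single immediate Henselian extension step and then iterate up to $\KH$. For the core step, suppose $\vcompf$ is a tower of complements for a valued field $\KF$ (in the sense of \S\ref{5.3}) and $a$ is Henselian over $\KF$ with $a\notin\KF$; let $p(X)=\sum_{n=0}^m b_nX^n\in\vring[X]$ be the minimal polynomial of $a$, so $\val(\dot p(a))=0$. Then $\KF(a)=\KF[a]$ is an immediate algebraic extension, $a$ is a limit of a pseudo Cauchy sequence of algebraic type in $\KF$ with no limit in $\KF$ satisfying case b) of the Fundamental Hypothesis~\ref{FH}, and $[\KF(a):\KF]=m$. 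If $\La=+\infty$, then $\KF(a)$ lies in the completion of $\KF$ and Corollary~\ref{COR:COMPLETION} already gives a multiplicative extension of $\vcompf$ to $\KF(a)$; so I would assume $\La<+\infty$ and let $\vcompbf$ be the family of Definition~\ref{DEF:EXT-RING}, which by Proposition~\ref{PROP:RING-EXT} is a weak tower of complements for $\KF(a)$ extending $\vcompf$.

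The only real obstacle is to check that this $\vcompbf$ is in fact a tower of complements, i.e.\ satisfies Axiom~\ref{AX:CE}, which is exactly what Proposition~\ref{PROP:ALG-MULT-NEC} tests. Here I would use that, because $a$ is Henselian over $\KF$, the cut $\La$ is a group (cf.~\cite{DEL}), hence of the form $\gamma+\ig\La$, so by Remark~\ref{REM:SUM-ITERATE} the criterion of Proposition~\ref{PROP:ALG-MULT-NEC} reduces to the single requirement $b_k\in\vcomp[\La]$ for $k=0,\dots,m-1$. Now $\val a\ge 0$ and, since $\iota=\id$, the residue $\res a\in\kf\subseteq\KF$ satisfies $\val(a-\res a)>0$, whence $\La\ge 0^+$ and therefore $\vring=\vcomp[0^-]\subseteq\vcomp[\La]$ by Axiom~\ref{AX:CD}; as $p\in\vring[X]$, each $b_k$ lies in $\vring\subseteq\vcomp[\La]$. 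So $\vcompbf$ is a tower of complements for $\KF(a)$ extending $\vcompf$. This multiplicativity verification, which rests on the structural fact that the breadth $\La$ of a Henselian element is the upper edge of a convex subgroup, is the step I expect to be the crux.

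Finally I would assemble the global statement by a Zorn's lemma argument on the poset $\mathcal{P}$ of pairs $(L,\vcompf')$ with $\KF\subseteq L\subseteq\KH$ and $\vcompf'$ a tower of complements for $L$ extending $\vcompf$, ordered by inclusion of fields together with componentwise inclusion of the families; note that, by Axiom~\ref{AX:CB} applied inside $L$, ``$\vcompf'$ extends $\vcompf$'' automatically forces $A'[\Gamma]\cap\KF=\vcomp[\Gamma]$. A directed union of a chain in $\mathcal{P}$, taken componentwise, is again a member of $\mathcal{P}$: each of the axioms \ref{AX:CA}--\ref{AX:CE} passes to directed unions, and in \ref{AX:CB} both the surjectivity of the sum and the triviality of the intersection of $A'[\Gamma]$ with the valuation ideal are preserved. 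Hence $\mathcal{P}$ is inductive and, as $(\KF,\vcompf)\in\mathcal{P}$, has a maximal element $(L,\vcompf')$. If $L\subsetneq\KH$, then $L$ is not Henselian (otherwise $\KH\subseteq L$, forcing $L=\KH$), so there is $a\in\KH\setminus L$ Henselian over $L$; applying the one-step construction above to $(L,\vcompf')$ produces a tower of complements for $L(a)\subseteq\KH$ extending $\vcompf'$, contradicting maximality. Therefore $L=\KH$, and $\vcompf'$ is the desired tower of complements for $\KH$ extending $\vcompf$.
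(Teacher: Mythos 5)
Your overall skeleton — reduce to a single immediate Henselian step via Proposition~\ref{PROP:RING-EXT}, verify multiplicativity via Proposition~\ref{PROP:ALG-MULT-NEC} using the fact that $\La$ is the upper edge of a group when $a$ is Henselian, and assemble the global statement by Zorn — is the right shape, and the Zorn argument at the end is sound. But the crux step, where you verify the hypothesis of Proposition~\ref{PROP:ALG-MULT-NEC}, contains a genuine error: you write ``$\vring=\vcomp[0^-]$''. These two sets are complementary, not equal. Recall $\vring[0^-]=\vring$ is the valuation ring and $\vcomp[0^-]=\vcomp$ is a $\kf$-complement of it, so $\vring\oplus\vcomp[0^-]=\KF$ and $\vring\cap\vcomp[0^-]=\{0\}$. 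Consequently, knowing that the coefficients $b_k$ of the minimal polynomial $p$ of a Henselian element lie in $\vring$ gives you no information at all about whether they lie in $\vcomp[\La]$; indeed for $\La<+\infty$ the inclusion $\vring\subseteq\vcomp[\La]$ fails, since by Lemma~\ref{LEM:MU}--\ref{EN:MU-2} elements $x\in\vring$ with $\mu(x)>\La$ are excluded from $\vcomp[\La]$ and such elements exist whenever $\bR\ne\vring+\vcomp[\La]\cap\vring$ nontrivially (i.e.\ in essentially every case of interest). So your verification of Axiom~\ref{AX:CE} collapses.

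This is precisely what the paper's proof has to work around: it does not feed the original Henselian element $c$ with its minimal polynomial $p$ into Proposition~\ref{PROP:ALG-MULT-NEC}. Instead, setting $\Lambda:=\ball{c}$, it decomposes $p=p_1+p_2$ coefficientwise at $\Lambda$ (so $p_1\in\vcompL[X]$, $p_2\in\vringL[X]$), notes that $p_1$ is monic of the same degree and still satisfies $\val(\dot p_1(c))=0$, and then produces a \emph{new} Henselian element $a$ with $p_1(a)=0$ and $\val(c-a)>\ballc$. A minimality-of-degree argument shows $p_1$ is irreducible, hence $a\notin\KF$, $p_1$ is the minimal polynomial of $a$, and $\La=\ballc$. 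Now the coefficients of the minimal polynomial of $a$ really do lie in $\vcomp[\La]$ by construction, and Proposition~\ref{PROP:ALG-MULT-NEC} applies. So to repair your proof you must replace the claim ``the coefficients of the minimal polynomial of a Henselian element lie in $\vcomp[\La]$'' with this truncation step, which is the actual content of the lemma.
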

\begin{proof}
Let $\FF$ be a maximal subfield of $\KH$ such that:
\begin{itemize}
\item $\KF \subseteq \FF$;
\item there exists a tower of complements for $\FF$ extending $\vcompf$.
\end{itemize}
If $\KF = \KH$, we are done.
Otherwise, we will reach a contradiction.
\sn
\Wlog, we can assume that $\FF = \KF$. Since $\KH \neq \KF$, $\KF$
does not satisfy Hensel's Lemma and hence there exists $c \in \KH
\setminus \KF$ such that $c$ is Henselian over~$\KF$.

Let $p(X) \in \KF[X]$ be the minimal polynomial of $c$ over $\KF$, and
$\Lambda := \ball{c} = \sup \set{\val(c-d): d \in \KF}$. \Wlog, we can
assume that the degree of $p$ is minimal among the degrees of the
minimal polynomials of elements Henselian over $\KF$, and not in $\KF$.
Since $\val c \geq 0$, $\Lambda > 0$. Decompose $p(X) = p_1(X) +
p_2(X)$, with $p_1 \in \vcompL[X]$, $p_2 \in \vringL[X]$. Note that
$p_1$ is monic and of the same degree as $p$, while $\deg p_2 < \deg p$.
Moreover, $\val(\dot p_1(c)) = 0$, because $\val(\dot p(c)) = 0$, and
$\val(p_1 - p) > 0$.
\sn
Therefore, there exists $a\in\KH$ such that $p_1(a) = 0$,
$\val a \geq 0$ and $\val(c-a)>0$.
Thus, $a$ is Henselian over $\KF$.
\footnote{Since the minimal polynomial of $a$ is a divisor of $p_1$,
and, by Gau\ss's lemma, it is in $\vring[][X]$.}
Besides, by the minimality of the degree of~$p$, either $p_1$ is
irreducible, or $a\in\KF$.
\begin{claim}
$a \in \KH \setminus \KF$.
\end{claim}
In fact, by~\cite[Proposition~5.11]{F}\footnote{With $q := p_1$,
$\alpha = 0$.}, $\val(a-c) \geq \val(p - p_1) = \val(p_2) > \ballc$.
If $a$ were in~$\KF$, this would contradict the definition of~$\ballc$.
\sn
Therefore, $p_1$ is irreducible, and hence $p_1$ is the minimal
polynomial of $a$ over~$\KF$. Note moreover that $\ballc = \La$.
Finally, by the observation preceding this lemma, we can extend
$\vcompf$ to a tower of complements for $\KF(a)$, contradicting the
maximality of~$\KF$.
\end{proof}
\mn
Concluding, let $\FF$ be a Henselian valued field, with residue field
$\ff$ of characteristic~$0$, and value group~$G$. There exists a residue
field section $\iota: \ff \to \FF$, that we fix, and assume it is the
inclusion map. There also exists a value group section $t: G \to \FF^*$,
with factor set~$\factor$. We claim that there exists a \towcomp for
$\FF$ compatible with the choice of $\iota$ and of~$t$.
\begin{thm}\label{buildhens}
Let $\FF$ be a Henselian valued field, with residue field $\ff$ of
characteristic~$0$, and value group~$G$. Assume that $\FF$ contains its
residue field $\ff$ and let $t: G \to \FF^*$ be a section, with factor
set~$f$. Then there exists a \towcomp for $\FF$ compatible with the
inclusion of $\ff$ and with~$t$.
\end{thm}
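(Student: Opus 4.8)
The plan is to construct the tower of complements by a transfinite exhaustion of $\FF$, using the two extension results already at hand. First, observe that the factor set $f$ is irrelevant here: the axioms \ref{AX:CA}--\ref{AX:CE} of a tower of complements, and the constructions of Sections~\ref{basicase}, \ref{eqf} and~\ref{5.3}, only involve $\ff$-spans of monomials $t^\gamma$ and products formed inside~$\FF$, none of which is affected by replacing $f=1$ by an arbitrary factor set (the coefficients $f[\alpha,\beta]$ lie in $\ff^*$, hence preserve every $\ff$-subspace). So it suffices to run the machinery of Section~\ref{building}. For the base of the construction, Section~\ref{basicase} gives the unique tower of complements for $\kgring$, and Proposition~\ref{quot} extends it to a tower of complements $\vcompf_0$ for the field of rational series $\kg$. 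Note that every field $K$ with $\kg\subseteq K\subseteq\FF$ has residue field $\ff$ and value group $G$, with value group section the restriction of~$t$, so $\FF/K$ is an immediate extension.

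Next I would apply Zorn's Lemma to the set $\mathcal{P}$ of pairs $(K,\vcompf_K)$, where $\kg\subseteq K\subseteq\FF$ is a subfield and $\vcompf_K$ is a tower of complements for~$K$ extending~$\vcompf_0$, ordered by $(K,\vcompf_K)\le(K',\vcompf_{K'})$ iff $K\subseteq K'$ and $\vcompf_{K'}$ extends $\vcompf_K$. Given a chain $\{(K_i,\vcompf_i)\}$, set $K:=\bigcup_i K_i$ and $\vcompf_K[\Gamma]:=\bigcup_i\vcompf_i[\Gamma]$ for each $\Gamma\in\vgh$. Axioms~\ref{AX:CA}, \ref{AX:CC}, \ref{AX:CD}, \ref{AX:CE}, \ref{AX:CF} and the equality $\vcompf_K[\Gamma]+\vring[\Gamma]=K$ pass to the union, since each element, and each of the finitely many elements involved in any instance of these properties, already lies in some~$K_i$. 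The directness $\vcompf_K[\Gamma]\cap\vring[\Gamma]=(0)$ holds because an element $x\in\vcompf_i[\Gamma]$ with $v(x)>\Gamma$ lies in $K_i$, hence in $\vcompf_i[\Gamma]\cap\vring[\Gamma]$ computed inside~$K_i$, which is $(0)$; here one uses only that the inequality ``$v(x)>\Gamma$'' defining $\vring[\Gamma]$ is the same whether read in $K_i$ or in $K$. Thus every chain has an upper bound in $\mathcal{P}$, and $\mathcal{P}$ has a maximal element $(K,\vcompf_K)$.

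It remains to show $K=\FF$; suppose $K\subsetneq\FF$. If $K$ is not Henselian, its Henselization $K^{\mathrm H}$ embeds over~$K$ into the Henselian field~$\FF$ and is strictly larger than~$K$; Lemma~\ref{LEM:TOWER-HENSEL}, applied with $K$ in place of~$\KF$, extends $\vcompf_K$ to a tower of complements for~$K^{\mathrm H}$, contradicting maximality. So $K$ is Henselian; since the residue characteristic is~$0$, $K$ is algebraically maximal and has no proper immediate algebraic extension. As $\FF/K$ is a nontrivial immediate extension, any $a\in\FF\setminus K$ is transcendental over~$K$, and by the theory of pseudo Cauchy sequences (\cite{Kap}) there is a pseudo Cauchy sequence $(a_\nu)_{\nu\in I}$ in $K$ with pseudo limit~$a$ and no pseudo limit in~$K$; since $K$ is algebraically maximal this sequence is of transcendental type, i.e.\ case~a) of the Fundamental Hypothesis~\ref{FH} holds. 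Then Lemma~\ref{LEM:TOWER-TRASCENDENTAL} (whose construction rests on Proposition~\ref{PROP:RING-EXT} and Corollary~\ref{COR:FIELD-MULT}) extends $\vcompf_K$ to a tower of complements for $K(a)\subseteq\FF$, again contradicting maximality. Hence $K=\FF$. Finally, the resulting tower $\vcompf_K$ is $t$-compatible by construction, and $\ff=t^0\ff\subseteq\vcompf_K[0^+]$ by Axiom~\ref{AX:CC}, so it is also compatible with the inclusion of~$\ff$; this proves the theorem.

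The genuinely delicate point is not the Zorn bookkeeping (routine, given that valuation ideals of subfields are traces of those of~$\FF$), but the dichotomy in the last paragraph: over a maximal $K\subsetneq\FF$ one can always either Henselize or adjoin a transcendental element, and is never forced into the algebraic case~b) of the Fundamental Hypothesis. This is exactly where residue characteristic~$0$ enters, through the fact that Henselian fields of residue characteristic~$0$ are algebraically maximal. All the substantive analytic work has already been done in Lemmas~\ref{LEM:TOWER-HENSEL} and~\ref{LEM:TOWER-TRASCENDENTAL}.
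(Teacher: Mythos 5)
Your proposal is correct and follows essentially the same route as the paper's own proof: build the base tower on $\ff(G,f)$, take a maximal subfield admitting an extension of it (Zorn), use Lemma~\ref{LEM:TOWER-HENSEL} to force that subfield to be Henselian, use Lemma~\ref{LEM:TOWER-TRASCENDENTAL} for the transcendental case, and invoke residue characteristic~$0$ to rule out proper immediate algebraic extensions. You merely spell out details the paper leaves implicit (the chain verification for Zorn, the irrelevance of the factor set, and why every element of $\FF\setminus K$ falls under case~a) of the Fundamental Hypothesis), which is fine.
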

\begin{proof}
We have seen that we can build a \towcomp for~$\ff(G, \factor)$. Let
$\KF \subseteq \FF$ be a maximal subfield admitting a \towcomp $\vcompf$
and such that $\ff(G, \factor) \subseteq \KF$. Then by
Lemma~\ref{LEM:TOWER-HENSEL}, $\KF$~is Henselian.
By Lemma~\ref{LEM:TOWER-TRASCENDENTAL}, $\FF$ is an algebraic extension
of~$\KF$. Since this extension is immediate and $\mathrm{char}\, \ff =
0$, this means that $\FF = \KF$.
\end{proof}

\subsection{The case of positive residue characteristic}   \label{PRC}
Towers of complements cannot always be extended to immediate algebraic
extensions.

\begin{example}\label{EX:WTOC}
Let $\ff$ be a perfect field of characteristic $p > 0$, and $y, t$ be
algebraically independent elements over~$\ff$. Let $\kf$ be the perfect
hull of $\ff(y)$, and $\KF$ be the perfect hull of $\kf(t)$, with
the $t$-adic valuation. $\KF$ has residue field $\kf$ and it is, in a
canonical way, a truncation-closed subfield of $\HF :=
\kf((t^{\frac{\Zed}{p^\infty}}))$.

Let $c$, $a$, $b$ be roots of the polynomials
\[\begin{aligned}
X^p - X &- y,\\
X^p - X &- \frac{1}{t},\\
X^p - X &- (\frac{1}{t} + y)
\end{aligned}
\]
respectively.
As an element of $\HF$, $a = t^{-\frac{1}{p}} + t^{-\frac{1}{p^2}}
+ t^{-\frac{1}{p^3}} + \dotsb$.
Moreover, $\kf(c)$ is a proper algebraic extension of $\kf$, and $b = a + c$.

It is easy to see that:
\begin{enumerate}
\item $\KF(a)$ and $\KF(b)$ are immediate algebraic extensions of $\KF$.
\item $\KF(a, b)$ is \emph{not} an immediate extension of $\KF$
(because $c\in\KF(a, b)$, but it is not in~$\kf$).
\end{enumerate}
Hence, $\KF$ has (at least) 2 different maximal immediate algebraic
extensions, one containing $a$, the other $b$. The truncation-closed
embedding of $\KF$ in $\HF$ can be extended to a truncation-closed
embedding of $\KF(a)$, but not of $\KF(b)$ (nor of any immediate
extension of $\KF(b)$).

However, both $a$ and $b$ satisfy the fundamental assumption b).
The truncation-closed embedding of $\KF$ in $\HF$ induces a unique tower
of complements~$\vcompf$. By Proposition~\ref{PROP:RING-EXT}, $\vcompf$
extends to a \wtoc for ~$\Kf(b)$, but no \wtoc for $\Kf(b)$ extending
$\vcompf$ can be a \towcomp Therefore, it is not possible to extend
$\vcompf$ to a tower of complements for $\KF(b)$, but only to a
\emph{weak} tower of complements.

The element $b$ does not satisfy the necessary and sufficient condition
of Proposition~\ref{PROP:ALG-MULT-NEC}, because $\La = \ball{b} = \sup_i
\set{-p^{-i}} =0^- < 0$, thus $\frac{1}{t} + y \notin \vcomp[p
\Lambda_b] = \vcomp[0^-]$.
\end{example}

This example together with the condition given in
Proposition~\ref{PROP:ALG-MULT-NEC}
leads us the way to construct extensions of towers of complements to at
least one suitable maximal immediate extension. For this, we need to
determine ``good'' minimal polynomials associated with pseudo Cauchy
sequences of algebraic type. From Theorem 13 of \cite{Ku} we infer the
following result, which is due to F.~Pop:

\begin{lemma}                               \label{MINPOLPC}
Assume that $\LF$ is a minimal immediate algebraic extension of the
henselian field $\KF$ of characteristic $p>0$, that is, it admits no
proper subextension. Then $\LF$ is generated by a root of an irreducible
polynomial of the form
\[p(X)\;=\;c+\sum_{i=0}^{n} c_i X^{p^i}\quad\mbox{with }\> c_n=1\;.\]
\end{lemma}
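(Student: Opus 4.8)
The plan is to realise $\LF$ through Kaplansky's theory of pseudo Cauchy sequences and then feed the minimal polynomial of such a sequence into the structural description of the relevant minimal polynomials contained in Theorem~13 of \cite{Ku}.

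\emph{Producing a generator and its minimal polynomial.} First I would fix any $a\in\LF\setminus\KF$; by minimality of $\LF/\KF$ we have $\LF=\KF(a)$. The set $\set{\val(a-d):d\in\KF}$ has no largest element — if it had one, decomposing an approximating $d\in\KF$ along that cut exactly as in the proof of Lemma~\ref{LEM:MU-LAMBDA} would force $a\in\KF$ — so there is a pseudo Cauchy sequence $\isequence$ in $\KF$ with limit $a$ and no limit in $\KF$. Let $g\in\KF[X]$ be the monic minimal polynomial of $a$ over $\KF$. For any nonzero $h\in\KF[X]$ with $\deg h<\deg g=[\LF:\KF]$ one has $h(a)\neq 0$, so $\val\Pa{h(\anu)}$ is eventually equal to the finite value $\val\Pa{h(a)}$, whereas $\val\Pa{g(\anu)}\to+\infty$ because $g(a)=0$; hence $\isequence$ is of algebraic type and $g$ is a minimal polynomial for it, with $a$ a root. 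By Ostrowski's lemma (\cite{E-P}), since $\KF$ is henselian and $\LF/\KF$ is immediate of equal characteristic $p>0$, $\deg g=p^{d}$ for some $d\ge 1$.

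\emph{Normalisation and the additive shape.} Next I would replace $\isequence$ by $(\anu-a_{\nu_0})_{\nu\in I}$ and $a$ by $a-a_{\nu_0}$ for a suitable $\nu_0\in I$ (which, by the same kind of reduction used in Section~\ref{5.3}, does not affect the limit and translates $g$ to $g(X+a_{\nu_0})$), and then multiply $a$ by a suitable power $t^{\gamma}$; note that $X\mapsto t^{\gamma}X$, after dividing by the new leading coefficient, sends a polynomial $c+\sum_i c_iX^{p^i}$ to another polynomial of the same form, so these operations are harmless for the conclusion. With this normalisation one is in the situation analysed in Theorem~13 of \cite{Ku}: expanding $g$ by Hasse derivatives around $\anu$ and studying, for $\nu$ large, the values $\val\Pa{\partial_k g(\anu)}+k\,\val(a-\anu)$, the analysis there — which exploits the defect of the immediate extension, the minimality of $g$ as a minimal polynomial of the sequence, and characteristic‑$p$ combinatorics — produces an additive polynomial $\psi(X)=\sum_{i=0}^{n}c_iX^{p^i}$ over $\KF$ with $c_n\neq 0$ and $\psi(a)\in\KF$, and, using minimality of $\LF/\KF$, with $p^{n}=\deg\psi=[\LF:\KF]$. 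Setting $c:=-c_n^{-1}\psi(a)$, the monic polynomial
\[
q(X)\;:=\;c+\sum_{i=0}^{n}c_n^{-1}c_i\,X^{p^i}
\]
annihilates $a$ and has degree $p^{n}=[\KF(a):\KF]$, hence is the minimal polynomial of $a$, in particular irreducible; so $\LF=\KF(a)$ is generated by a root of a polynomial of the required form.

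\emph{Main obstacle.} The genuinely hard ingredient is the middle step: the valuation-theoretic fact that the minimal polynomial of a pseudo Cauchy sequence attached to an immediate extension of a henselian equal-characteristic-$p$ field is, after the above normalisation, additive-plus-constant (up to negligible terms), and that in the minimal case the least additive polynomial $\psi$ over $\KF$ with $\psi(a)\in\KF$ has degree exactly $[\LF:\KF]$. This is Pop's refinement underlying Theorem~13 of \cite{Ku}; it is not a purely algebraic phenomenon — for a non-immediate extension of $p$-power degree the minimal polynomial need not have this shape and the least such $\psi$ can have strictly larger degree — and it hinges on the interplay of the minimality property of minimal polynomials of pseudo Cauchy sequences, the divisibility of binomial coefficients by $p$, and henselianity. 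Everything else (choice of generator, translation and scaling of the sequence, the degree count via Ostrowski, and the passage from the degree equality to irreducibility of $q$) is routine.
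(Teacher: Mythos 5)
The paper does not actually prove Lemma~\ref{MINPOLPC}: it is quoted, with attribution to F.~Pop, as something ``inferred'' from Theorem~13 of \cite{Ku}, and no argument is given. Your proposal, by your own admission, likewise leaves the entire substance of the statement (your ``genuinely hard ingredient'') to that same theorem, so at the level of what is actually established the two texts coincide: both are citations of Pop's result, and your write-up does not supply a proof that the paper lacks.

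The scaffolding you wrap around the citation, however, contains a concrete error. You claim that for every nonzero $h\in\KF[X]$ with $\deg h<\deg g=[\LF:\KF]$, the fact that $h(a)\neq 0$ forces $\val\Pa{h(\anu)}$ to be eventually constant, and hence that the minimal polynomial $g$ of $a$ over $\KF$ is a Kaplansky minimal polynomial of the sequence $\isequence$. Kaplansky's dichotomy only says that $\val\Pa{h(\anu)}$ is eventually constant or eventually strictly increasing; in the latter case $0$ is a pseudo limit of $\Pa{h(\anu)}$, but so is $h(a)$, and two pseudo limits of the same pseudo Cauchy sequence need not coincide: $h(a)$ can perfectly well be a nonzero element of large value. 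So eventual strict increase does not entail $h(a)=0$, and a minimal polynomial of $\isequence$ can a priori have degree strictly smaller than $[\LF:\KF]$; minimality of $\extension{\LF}{\KF}$ does not obviously rule this out, because the immediate extension $\KF(b)$ that Kaplansky's Theorem~3 attaches to such a smaller-degree minimal polynomial need not embed into $\LF$. (Also, $\val\Pa{g(\anu)}$ is eventually strictly increasing, not necessarily tending to $+\infty$.) If Theorem~13 of \cite{Ku} is invoked in the form the paper uses it --- a statement directly about minimal immediate algebraic extensions of henselian fields of characteristic $p$ --- then your detour through pseudo Cauchy minimal polynomials, Ostrowski's lemma and the rescaling normalisation is unnecessary; but as written, your route through ``$g$ is a minimal polynomial of the sequence'' has a genuine gap at exactly this point.
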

\n
Note that the polynomial
\[{\cal A}_p(X)\;:=\;p(X)-c\;=\;\sum_{i=0}^{n} c_i X^{p^i}\]
is additive, that is,
\[{\cal A}_p(x+y)\;=\;{\cal A}_p(x)+{\cal A}_p(y)\;.\]

We will now derive another minimal polynomial for an immediate extension
of $\KF$ from $p(X)$, one that is suitable for our purposes. Let $a$
denote the root of $p(X)$ that generates $\LF$, and choose a pseudo
Cauchy sequence $(a_\nu)_{\nu\in I}$ without a limit in $\KF$
that has $a$ as a limit. We have
\[-p(a_\nu)\>=\>p(a)-p(a_\nu)
\>=\> \sum_{i=1}^{n} c_i (a-a_\nu)^{p^i}\;.\]
Since the values $v(a-a_\nu)$ are eventually strictly increasing with
$\nu$, there is some $i_0$ (indepent of $\nu$) such that for all $i\ne
i_0\,$,
\[vc_{i_0} (a-a_\nu)^{p^{i_0}}\>=\>vc_{i_0} + p^{i_0} v(a-a_\nu)\><\>
\>vc_i + p^i v(a-a_\nu)\>=\>vc_i (a-a_\nu)^{p^i}\;.\]
We conclude that for large enough $\nu$, the values
\[vp(a_\nu)\>=\>v\sum_{i=0}^{n} c_i (a-a_\nu)^{p^i}
\>=\>vc_{i_0} (a-a_\nu)^{p^{i_0}}\]
are strictly increasing and are all contained in
\[vc_{i_0}+p^{i_0}\La\>\leq\>p^n\La\;,\]
where the inequality follows from the case $i=n$ in the above inequality
for the value of the summands.
For $0\leq i<n$, split
\[c_i\>=\>b_i+b'_i\]
at $p^n\La - p^i\La$ so that $b_i\in\vcomp[p^n\La - p^i\La]$ and
$b'_i\in\vring[p^n\La - p^i\La]$. As in the proof of
Proposition~\ref{PROP:RING-EXT} we find some
$\nu_0\in I$ such that for all $i$ and all $\nu>\nu_0\,$,
\[vb'_i+p^iv(a_\nu-a_{\nu_0})\> = \>vb'_i+p^iv(a-a_{\nu_0})\>
>\>p^n\La\;.\]
Now we split
\[c+{\cal A}_p(a_{\nu_0})\>=\>b+b'\]
at $p^n\La$ so that $b\in\vcomp[p^n\La]$ and $b'\in\vring[p^n\La]$.
We set
\[q(X)\>:=\> b+\sum_{i = 0}^n b_i X^{p^i}\in \KF[X]\;.\]
Then for all $\nu>\nu_0\,$,
\begin{eqnarray*}
q(a_\nu-a_{\nu_0}) & = & b+\sum_{i = 0}^n b_i (a_\nu-a_{\nu_0})^{p^i}\\
& = & c+\sum_{i=0}^n c_i (a_\nu-a_{\nu_0})^{p^i}+{\cal A}_p(a_{\nu_0})
-b'-\sum_{i=0}^n b'_i (a_\nu-a_{\nu_0})^{p^i}\\
& = & p(a_\nu)-b'-\sum_{i=0}^n b'_i (a_\nu-a_{\nu_0})^{p^i}\;.
\end{eqnarray*}
Since $vp(a_\nu) < p^n\La$ and $v(b'+\sum_{i=0}^n b'_i
(a_\nu-a_{\nu_0})^{p^i})>p^n\La$, we conclude that
\[vq(a_\nu-a_{\nu_0})\>=\>vp(a_\nu)\]
for all $\nu>\nu_0$. Hence, $vq(a_\nu-a_{\nu_0})$ is strictly increasing
for large enough $\nu$. On the other hand, $(a_\nu-a_{\nu_0})_{\nu\in I}$
is a pseudo Cauchy sequence without a limit in $\KF$, like
$(a_\nu)_{\nu\in I}$. Now we are ready to prove our main theorem for the
case of positive characteristic:
\begin{thm}                             \label{extamKf}
Assume that the valued field $\KF$ admits a tower of complements. Then
there is at least one maximal immediate extension and at least one
maximal immediate algebraic extension of $\KF$ that admits a tower of
complements that extends the tower of $\KF$.
\end{thm}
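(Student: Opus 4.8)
The plan is to obtain the extension by Zorn's lemma, once inside an algebraic closure of $\KF$ and once inside a fixed maximal immediate extension, each time removing an obstruction to maximality with the tools of this section. We consider the poset $\mathcal P$ (respectively $\mathcal P^{\mathrm{alg}}$) of pairs $(L,\vcompbf)$ with $L$ an immediate (respectively immediate algebraic) extension of $\KF$ and $\vcompbf$ a $t$-compatible tower of complements for $L$ extending $\vcompf$; since the value group does not grow along immediate extensions, $t$ extends and $t$-compatibility is automatic. Order these pairs by inclusion of fields together with extension of towers. Each poset contains $(\KF,\vcompf)$, and a chain $\set{(L_i,\vcompbf_i)}$ has the upper bound $\Pa{\bigcup_i L_i,\ \set{\bigcup_i B_i[\Gamma]:\Gamma\in\vgh}}$: axioms \ref{AX:CA}--\ref{AX:CE} pass to directed unions, the only points needing a word being the ``$\cap$'' half of \ref{AX:CB} and the nontrivial direction of \ref{AX:CD}, which follow from the uniqueness of the relevant splittings at a single stage $L_i$. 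By Zorn's lemma each poset has a maximal element $(L,\vcompbf)$, and it remains to prove that such an $L$ is a maximal immediate (respectively immediate algebraic) extension of $\KF$; we argue by contradiction, assuming $L$ has a proper immediate (respectively proper immediate algebraic) extension.

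First, by Lemma~\ref{LEM:TOWER-HENSEL} the Henselization of $L$ is immediate algebraic over $\KF$ and carries a tower of complements extending $\vcompbf$, so by maximality $L$ is Henselian. As $L$ is not maximal it possesses a pseudo Cauchy sequence without a limit in $L$, of transcendental type or of algebraic type. In the transcendental case --- the only one that can occur when $\ch\kf=0$, and which does not arise for $\mathcal P^{\mathrm{alg}}$ --- a limit $a$ of such a sequence is transcendental over $L$ and satisfies case~a) of the Fundamental Hypothesis~\ref{FH}, so Proposition~\ref{PROP:RING-EXT} together with Corollary~\ref{COR:FIELD-MULT} extends $\vcompbf$ to a tower of complements for $L[a]$, and then Lemma~\ref{LEM:TOWER-TRASCENDENTAL} (or Proposition~\ref{quot}) extends it to $L(a)\supsetneq L$, contradicting maximality. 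So $L$ has a proper immediate \emph{algebraic} extension; since $L$ is Henselian, its residue characteristic is then positive, say $p$, and $\ch L=p$ as well.

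This algebraic case is the heart of the proof, and the obstacle is that the obvious extension of $\vcompbf$ to an immediate algebraic extension need only be a \emph{weak} tower of complements --- Example~\ref{EX:WTOC} shows multiplicativity can genuinely fail --- so the extension must be chosen so as to satisfy the criterion of Proposition~\ref{PROP:ALG-MULT-NEC}. By Lemma~\ref{MINPOLPC}, $L$ has a minimal immediate algebraic extension generated by a root $a$ of an irreducible $p(X)=c+\sum_{i=0}^{n}c_iX^{p^i}$ with $c_n=1$; put $\La:=\ball{a}$. If $\La=+\infty$, then $L(a)$ lies in the completion of $L$ and Corollary~\ref{COR:COMPLETION} extends $\vcompbf$ to $L(a)$, still a tower of complements --- contradiction. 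If $\La<+\infty$, run the construction preceding this theorem, taking a pseudo Cauchy sequence $\isequence$ in $L$ without limit in $L$ and converging to $a$ (so $a$ satisfies case~b) of~\ref{FH} with $m=p^n$): it produces some $\nu_0\in I$ and a monic polynomial $q(X)=b+\sum_{i=0}^{n}b_iX^{p^i}$ of degree $p^n$ with $b\in\vcompb[p^{n}\La]$ and $b_i\in\vcompb[p^{n}\La-p^{i}\La]$ for $i<n$, along which $\val\Pa{q(\anu-a_{\nu_0})}=\val\Pa{p(\anu)}$ is eventually strictly increasing. Since translation by $a_{\nu_0}$ changes neither the breadth nor the minimal polynomial degree of a pseudo Cauchy sequence, $q$ is a minimal polynomial for $(\anu-a_{\nu_0})_{\nu\in I}$, hence irreducible, and by Kaplansky's theory it has a root $a'$ that is a limit of that sequence; then $L(a')$ is a proper immediate algebraic extension of $L$ (and of $\KF$), $a'$ satisfies case~b) of~\ref{FH}, and $\ball{a'}=\La<+\infty$. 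All coefficients of $q$ at exponents other than the $p^{i}$ vanish, while those at $p^{i}$ lie in $\vcompb[p^{n}\ball{a'}-p^{i}\ball{a'}]$, so $q$ meets the hypothesis of Proposition~\ref{PROP:ALG-MULT-NEC} with $p^{n}$ in the role of $m$; therefore the weak tower of complements for $L(a')$ furnished by Definition~\ref{DEF:EXT-RING} and Proposition~\ref{PROP:RING-EXT} is multiplicative, \ie\ a tower of complements, and it extends $\vcompbf$ --- contradicting the maximality of $(L,\vcompbf)$. Hence the maximal $L$ of each Zorn argument is a maximal immediate (respectively immediate algebraic) extension of $\KF$ carrying a tower of complements extending $\vcompf$, as asserted; the single genuinely delicate step is the verification just made that the polynomial $q$ of the pre-theorem construction satisfies the multiplicativity criterion of Proposition~\ref{PROP:ALG-MULT-NEC}.
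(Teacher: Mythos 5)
Your overall route is the same as the paper's (Zorn's Lemma, reduction to Henselian $L$ via Lemma~\ref{LEM:TOWER-HENSEL}, Lemma~\ref{MINPOLPC}, the pre-theorem construction of $q$, Proposition~\ref{PROP:ALG-MULT-NEC}, and Lemma~\ref{LEM:TOWER-TRASCENDENTAL} for the remaining transcendental step), but there is a genuine gap at the decisive point. You choose a \emph{lattice}-minimal immediate algebraic extension $L(a)$ (one with no proper subextension), of degree $p^n$, and then assert in passing that the pseudo Cauchy sequence $\isequence$ converging to $a$ ``satisfies case b) of~\ref{FH} with $m=p^n$'', whence $q$, having degree $p^n$ and eventually increasing values, is a minimal polynomial of the translated sequence. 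Nothing you have established implies this. Lattice-minimality of $L(a)/L$ gives no lower bound on the minimal polynomial degree $d$ of $\isequence$: a priori $1<d<p^n$ is possible, since a polynomial of degree $d<p^n$ with eventually increasing values yields, by Kaplansky's Theorem~3, an immediate extension of $L$ of degree $d$, but that extension need not embed into $L(a)$, so it does not contradict the absence of proper subextensions of $L(a)$. If $d<p^n$, then $q$ is \emph{not} a minimal polynomial of $(\anu-a_{\nu_0})_{\nu\in I}$, Kaplansky's Theorem~3 no longer gives that $L(a')$ is immediate with $a'$ a limit and $[L(a'):L]=\deg q$, case b) of the Fundamental Hypothesis fails for $a'$, and Proposition~\ref{PROP:ALG-MULT-NEC} (whose statement presupposes FH~b), with $m$ equal to the minimal degree) is not applicable; so the multiplicativity verification you single out as the delicate step rests on an unproved premise. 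The paper closes exactly this hole by taking $n$ \emph{minimal among all integers $>0$ such that $L$ admits an immediate extension of degree $p^n$} (such an extension automatically has no proper subextension, so Lemma~\ref{MINPOLPC} still applies): then a polynomial of degree $<p^n$ with eventually increasing values would produce an immediate extension of degree strictly between $1$ and $p^n$, contradicting minimality, and hence $q$ really is a minimal polynomial of the translated sequence. Your argument is repaired by choosing the extension of minimal \emph{degree} rather than merely a minimal extension.

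A secondary point: for the non-algebraic half you propose to run Zorn ``inside a fixed maximal immediate extension''. That cannot work as framed: the field $L(a')$ produced in the algebraic step (a root of $q$ together with an immediate prolongation of the valuation) need not embed over $L$ into a maximal immediate extension fixed in advance; Example~\ref{EX:WTOC} shows that some maximal immediate extensions admit no extending tower at all, which is precisely why the theorem only claims ``at least one''. You should let Zorn range over (a set of representatives of) all immediate extensions equipped with extending towers, as the paper implicitly does, and conclude only at the end that the maximal pair is a maximal immediate extension; working inside a fixed algebraic closure for the algebraic half is fine, provided one passes to a conjugate root of $q$ so that the restricted valuation is the immediate one.
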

\begin{proof}
Take any immediate extension $\LF$ of $\KF$ that admits an extension of
the tower $\vcompf$ of $\KF$.
\sn
Claim: {\it If\/ $\LF$ is not algebraically maximal, then there is some
proper immediate algebraic extension of\/ $\LF$ that admits an extension
of the tower $\vcompf$ and thus of the tower of\/ $\KF$.}
\sn
We may assume that $\LF$ is henselian because otherwise, we are done by
Lemma~\ref{LEM:TOWER-HENSEL}.
Take $n$ to be minimal among all integers $>0$ for which $\LF$ admits an
immediate extension of degree $p^n$. Take $\LF'$ to be such an
extension of degree $p^n$, and take $p(X)$ as in Lemma~\ref{MINPOLPC}. Then
choose the pseudo Cauchy sequence $(a_\nu)_{\nu\in I}$ and construct the
polynomial $q(X)$ as described above. There cannot be any polynomial
$r(X)$ of degree $<p^n$ such that $vr(a_\nu-a_{\nu_0})$ eventually
increases with $\nu$ since otherwise by Theorem 3 of \cite{Kap}, there
would be an immediate extension of degree $<p^n$ of $\LF$ (note that any
immediate algebraic extension of $\LF$ has degree a power of $p$). Hence
again by
Theorem 3 of \cite{Kap}, we may choose any root $\tilde{a}$ of $q(X)$
and an immediate extension of
$v$ from $\LF$ to $\LF(\tilde{a})$. Since the coefficients of $q(X)$
satisfy the conditions of Proposition~\ref{PROP:ALG-MULT-NEC}, the tower
$\vcompf$ extends to a tower of complements of $\LF(\tilde{a})$.
This proves our claim and, by means of Zorn's Lemma, the algebraic part
of our theorem.
\par\smallskip
If $\LF$ is not maximal, then by what we have shown, it admits an
extension of the tower to some maximal immediate algebraic extension. If
it is already algebraically maximal but admits a proper immediate
extension generated by a limit $a$ of a transcendental pseudo Cauchy
sequence, then Lemma~\ref{LEM:TOWER-TRASCENDENTAL} shows that the tower
of $\LF$ can be extended to a tower of $\LF(a)$. Again by means of
Zorn's Lemma, this proves the remaining part of our theorem.
\end{proof}
Since the maximal immediate algebraic extensions of Kaplansky fields are
unique up to (valuation preserving) isomorphism, we obtain:
\begin{thm}\label{buildkapl}
Let $\FF$ be an algebraically maximal Kaplansky field. Assume that $\FF$
contains its residue field $\ff$ and let $t: G \to \FF^*$ be a section,
with factor set~$f$. Then there exists a \towcomp for $\FF$ compatible
with the inclusion of $\ff$ and with~$t$.
\end{thm}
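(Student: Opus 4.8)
The plan is to imitate the proof of Theorem~\ref{buildhens}, using Theorem~\ref{extamKf} together with the uniqueness of maximal immediate algebraic extensions of Kaplansky fields in place of the fact (available only in residue characteristic $0$) that Henselian fields have no proper immediate algebraic extensions. Recall first that $\ff(G,f)$ admits a \towcomp: the subring $\ff[G]$ has one by Section~\ref{basicase}, and it extends to the fraction field $\ff(G,f)$ by Section~\ref{eqf} (or Proposition~\ref{quot}). Note that any \towcomp $\vcompf$ is automatically compatible with the inclusion of $\ff$ (which is built into all the axioms) and with $t$: by~\ref{AX:CC} we have $t^\gamma\in\vcomp[\gamma^+]$, and by~\ref{AX:CD} $\vcomp[\gamma^+]\subseteq\vcomp[0^-]$ whenever $\gamma<0$. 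Since $\FF$ contains $\ff$ and the image of $t$, it contains $\ff(G,f)$; and since $\ff(G,f)$ and $\FF$ share residue field $\ff$ and value group $G$, every field $\KF$ with $\ff(G,f)\subseteq\KF\subseteq\FF$ is an immediate extension of $\ff(G,f)$, with residue field $\ff$ and value group $G$.

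By Zorn's Lemma, choose a maximal subfield $\KF$, $\ff(G,f)\subseteq\KF\subseteq\FF$, admitting a \towcomp $\vcompf$; the hypotheses of Zorn are met because along an increasing chain of such fields the \towcomp s may be taken to extend one another term-wise, and a term-wise union of \towcomp s is again a \towcomp (the only nontrivial point, axiom~\ref{AX:CB}, holds since the valuation and the direct-sum decompositions restrict compatibly along the chain). It suffices to prove $\KF=\FF$. By Lemma~\ref{LEM:TOWER-HENSEL}, $\KF$ is Henselian, for otherwise $\vcompf$ extends to a \towcomp on the Henselization of $\KF$, which lies inside $\FF$ since $\FF$ is Henselian, contradicting maximality. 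By Lemma~\ref{LEM:TOWER-TRASCENDENTAL}, $\FF$ is algebraic over $\KF$: a transcendental element $a\in\FF\setminus\KF$ would be the limit of a pseudo Cauchy sequence in $\KF$ of transcendental type without a limit in $\KF$, and the lemma extends $\vcompf$ to a \towcomp for $\KF(a)\subseteq\FF$, again a contradiction. Hence $\FF/\KF$ is immediate and algebraic, and since $\FF$ is algebraically maximal, $\FF$ is a \emph{maximal} immediate algebraic extension of $\KF$.

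It remains to invoke the Kaplansky machinery. Since Hypothesis~A depends only on $\ff$ and $G$, the field $\KF$ is itself a Kaplansky field, so its maximal immediate algebraic extension is unique up to valuation-preserving isomorphism over $\KF$. On the other hand, Theorem~\ref{extamKf} furnishes some maximal immediate algebraic extension $\KF^{a}\supseteq\KF$ carrying a \towcomp that extends $\vcompf$. A $\KF$-isomorphism $\KF^{a}\cong\FF$ then transports this \towcomp to a \towcomp for $\FF$. By the maximality of $\KF$ this forces $\FF=\KF$, and the resulting \towcomp for $\FF$ is, as observed above, compatible with the inclusion of $\ff$ and with $t$; this completes the proof.

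The step I expect to be the main obstacle is this last one: one must check carefully that $\KF$ (and not merely $\FF$) satisfies Hypothesis~A, so that Kaplansky's uniqueness theorem applies with $\KF$ as base field, and then justify that $\FF$, being an \emph{arbitrary} maximal immediate algebraic extension of $\KF$, is $\KF$-isomorphic to the particular $\KF^{a}$ produced by Theorem~\ref{extamKf}. Both facts are standard, but they are exactly the points where the argument genuinely departs from the residue-characteristic-$0$ proof of Theorem~\ref{buildhens}; everything else is a routine transcription.
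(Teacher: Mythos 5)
Your overall skeleton is the one the paper intends: the paper itself only records the one-line deduction ``Theorem~\ref{extamKf} plus uniqueness of maximal immediate algebraic extensions of Kaplansky fields'', and your Zorn argument, the use of Lemma~\ref{LEM:TOWER-HENSEL}, and the final transport of the tower along a valuation-preserving $\KF$-isomorphism (which fixes $\kf$ and the image of $t$, so compatibility is preserved) all match that plan. The difficulty is not where you predict it. The genuine gap is the step ``$\FF$ is algebraic over $\KF$''. Your justification --- that a transcendental $a\in\FF\setminus\KF$ would be a limit of a pseudo Cauchy sequence of \emph{transcendental} type without limit in $\KF$ --- is false in general: the set of limits of a pseudo Cauchy sequence of algebraic type is closed under adding any element of sufficiently large value, so such a sequence can perfectly well have limits that are transcendental over $\KF$ (compare Example~\ref{EX:WTOC}, where $a$ and $b=a+c$ are limits of the same sequence). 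In the proof of Theorem~\ref{buildhens} the analogous step is legitimate only because a Henselian field of residue characteristic $0$ admits no proper immediate algebraic extension, hence every pseudo Cauchy sequence in $\KF$ without a limit in $\KF$ is automatically of transcendental type. In positive residue characteristic your maximal subfield $\KF$ is Henselian but need not be algebraically maximal, so an element $a\in\FF\setminus\KF$ may satisfy neither alternative of the Fundamental Hypothesis~\ref{FH}; then neither Lemma~\ref{LEM:TOWER-TRASCENDENTAL} nor the algebraic machinery of Section~\ref{5.3} applies to $\KF(a)$, maximality of $\KF$ yields no contradiction, and you cannot conclude that $\FF$ is a maximal immediate algebraic extension of $\KF$ --- at which point the Kaplansky uniqueness transport has nothing to act on.

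To close the gap you must bring the Kaplansky/Hypothesis~A input in \emph{before} the final identification, not after. One way: show that if $\KF$ were not algebraically maximal, then the special immediate algebraic extension $\KF(\tilde a)$ constructed in the proof of Theorem~\ref{extamKf} (a root of the polynomial $q(X)$ whose coefficients satisfy the condition of Proposition~\ref{PROP:ALG-MULT-NEC}) can be realized \emph{inside} $\FF$, using that $\FF$ is Henselian, algebraically maximal and Kaplansky, so that $\FF$ contains a root of the relevant minimal polynomial which is a limit of the sequence; this contradicts the maximality of $\KF$, hence $\KF$ is algebraically maximal, every pseudo Cauchy sequence in $\KF$ without a limit in $\KF$ is then of transcendental type, and $\FF=\KF$ follows directly. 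Realizing that extension inside $\FF$ is exactly the point where Hypothesis~A (and the uniqueness/embedding machinery behind the paper's one-line remark) is needed, and it is precisely the part of the argument that is \emph{not} a routine transcription of the residue-characteristic-$0$ proof; your closing paragraph identifies the application of uniqueness as the delicate point but treats the algebraicity of $\FF/\KF$ as routine, which is backwards.
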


Since algebraically maximal Kaplansky fields of positive characteristic,
being perfect fields, always admit embeddings of their residue field and
a value group section, we conclude:
\begin{corollary}                             \label{tcepc}
Every algebraically maximal Kaplansky field of positive characteristic
with value group $G$ and residue field $\kf$ admits a truncation closed
embedding in some power series field $\kf((G, f))$. In particular, every
algebraically closed valued field of positive characteristic with value
group $G$ and residue field $\kf$ admits a truncation closed
embedding in $\kf((G))$.
\end{corollary}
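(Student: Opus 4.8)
The plan is to deduce the corollary from Theorem~\ref{buildkapl} and the correspondence of Corollary~\ref{COR:EMBEDDING}, once the necessary sections have been produced. Let $\KF$ be an algebraically maximal Kaplansky field of positive characteristic $p$; then automatically $\ch\kf=\ch\KF=p$, and Hypothesis~A forces $G$ to be $p$-divisible and $\kf$ to admit no finite extension of degree divisible by $p$. As noted above, such a field is perfect, and therefore it admits a residue field section $\iota:\kf\to\KF$, which we fix and take to be the inclusion, and a value group section $t:G\to\KF^*$; let $f:={\bf d}t$ be its factor set (Lemma~\ref{LEM:CO-BOUNDARY}), so that $\kf(G,f)\subseteq\KF$ and the standing hypotheses of Section~\ref{SEC:FAMILY} are met. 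Now Theorem~\ref{buildkapl} yields a tower of complements $\vcompf$ for $\KF$ compatible with $\iota$ and $t$, and Corollary~\ref{COR:EMBEDDING} converts $\vcompf$, through the map $\fsum$ of Section~\ref{EPS}, into a truncation closed embedding of $\KF$ into $\kf((G,f))$ which is the identity on $\kf(G,f)$. This establishes the first assertion.

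For the ``in particular'' clause, suppose $\KF$ is algebraically closed of positive characteristic $p$ (hence again $\ch\kf=p$). Having no proper algebraic extension at all, it has in particular no proper immediate algebraic extension, so it is algebraically maximal; its residue field $\kf$ is algebraically closed, so it has no finite extension of degree divisible by $p$, and its value group $G$ is divisible, hence $p$-divisible, so $\KF$ is a Kaplansky field. Since $G$ is torsion-free and divisible, it is an injective $\Zed$-module, so the short exact sequence $1\to\vring^*\to\KF^*\to G\to 1$ splits by a group homomorphism; taking $t$ to be such a splitting, Lemma~\ref{LEM:CO-BOUNDARY} gives $f={\bf d}t=1$. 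Applying the first part with this choice of $t$ produces a truncation closed embedding of $\KF$ into $\kf((G,1))=\kf((G))$.

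I do not anticipate a real difficulty here: the substance is already carried by Theorems~\ref{buildkapl} and~\ref{extamKf} and by Corollary~\ref{COR:EMBEDDING}. The only two points demanding a moment's care are the existence of the sections in the general Kaplansky case, which rests on perfectness and hence ultimately on algebraic maximality, and, in the algebraically closed case, the choice of a \emph{homomorphic} value group section so that the target power series field carries the trivial factor set; both are dealt with as above.
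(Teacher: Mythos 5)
Your overall route is the same as the paper's: the corollary is obtained by combining Theorem~\ref{buildkapl} with the correspondence between towers of complements and truncation closed embeddings (Theorem~\ref{THM:EMBEDDING} and Corollary~\ref{COR:EMBEDDING}), after observing that an algebraically maximal Kaplansky field of positive characteristic is perfect and therefore admits a residue field section and a value group section with some factor set~$f$; this is precisely the one-sentence argument the paper places before the corollary, and your treatment of the first assertion is correct.

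The one step whose stated justification would fail is the splitting argument in the ``in particular'' clause. You argue that because $G$ is torsion-free and divisible it is an injective $\Zed$-module, and conclude that the sequence $1\to\vring^*\to\KF^*\to G\to 1$ splits. Injectivity (divisibility) of the \emph{quotient} of a short exact sequence of abelian groups does not yield splitting: a sequence splits when the kernel is injective or the quotient is projective, and a divisible torsion-free quotient is not projective (for instance $\mathrm{Ext}^1_{\Zed}(\mathbb{Q},\Zed)\neq 0$, so surjections onto $\mathbb{Q}$-vector spaces need not split). The fact you need is nevertheless true, with the correct reason being that the \emph{kernel} $\vring^*$ is divisible when $\KF$ is algebraically closed: given a unit $u$ and $n\geq 1$, any root $w$ of $X^n-u$ lies in $\KF$ and satisfies $nv(w)=v(u)=0$, hence $v(w)=0$ since $G$ is torsion-free, so $w\in\vring^*$. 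Thus $\vring^*$ is an injective $\Zed$-module, the sequence splits, a homomorphic section $t:G\to\KF^*$ exists, and Lemma~\ref{LEM:CO-BOUNDARY} gives $f={\bf d}t=1$, so the first part applies with target $\kf((G))$. With that repair your proof of the second assertion is complete; the paper itself leaves this point implicit.
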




\adresse
\end{document}
%